\numberwithin{equation}{section} \setlength{\textwidth}{6.3in}
\newcommand{\Rb}{{\mathbb{R}}}
\newcommand{\C}{{\mathcal{C}}}
\newcommand{\LL}{{\mathcal{L}}}
\newcommand{\HH}{{\mathcal{H}}}
\newcommand{\M}{{\mathcal{M}}}
\def\rightharpoonupfill@{\arrowfill@\relbar\relbar\rightharpoonup}
\newcommand{\xrightharpoonup}[2][]{\ext@arrow
0359\rightharpoonupfill@{#1}{#2}} \makeatother
\newcommand{\res}{\mathop{\hbox{\vrule height 7pt width .5pt depth 0pt
\vrule height .5pt width 6pt depth 0pt}}\nolimits}
\def\e{{\varepsilon}}
\def\O{{\Omega}}
\def\M{{\it M}}
\def\M{{\cal M}}
\def\HH{{\cal H}}
\newtheorem{Theorem}{Theorem}[section]
\newtheorem{Lemma}[Theorem]{Lemma}
\newtheorem{Proposition}[Theorem]{Proposition}
\newtheorem{Remark}[Theorem]{Remark}
\newtheorem{Definition}[Theorem]{Definition}
\newcommand{{\rr}}{{\mathbb R}}
\newenvironment{@abssec}[1]{%
     \if@twocolumn
       \section*{#1}%
     \else
       \vspace{.05in}\footnotesize
       \parindent .2in
         {\upshape\bfseries #1. }\ignorespaces
     \fi}
     {\if@twocolumn\else\par\vspace{.1in}\fi}
\newcommand\keywordsname{Key words}
\newcommand\AMSname{AMS subject classifications}
\begin{document}

\title{Dimension reduction for $-\Delta_1$}
\author{\textsc{Maria Emilia Amendola}\thanks{Dipartimento di Matematica, Universita' degli Studi di Salerno, via Ponte Don Melillo, 84084 Fisciano (SA) Italy. Email: emamendola@unisa.it},
\textsc{Giuliano Gargiulo}, \thanks{DSBGA, Universita' del Sannio, Benevento Italy. Email: ggargiul@unisannio.it}
\textsc{ Elvira Zappale}\thanks{DIEII, Universita'
degli Studi di Salerno, Via Ponte Don Melillo, 84084 Fisciano (SA) Italy.
E-mail:ezappale@unisa.it}}
\maketitle

\begin{abstract}
\noindent A 3D-2D dimension reduction for $-\Delta_1$ is obtained. A power law approximation from $-\Delta_p$ as $p \to 1$ in terms of $\Gamma$- convergence, duality and asymptotics for least gradient functions has also been provided.
 
\medskip

\noindent\textbf{Keywords}: $1$-Laplacian, $\Gamma$-convergence, least gradient functions, dimension reduction, duality.

\noindent\textbf{MSC2010 classification}: 35J92, 49J45, 49K20, 49M29.

\end{abstract}

\section{Introduction}
Recently a great deal of attention has been devoted to thin structures because of the many applications they find in the applied sciences.
A wide literature, concerning mathematical problems defined in thin structures  and modelled through partial differential equations and integral functionals, is available both in the Sobolev and $BV$ settings. To our knowledge little is known when one wants to investigate the relations between problems dealing with thin structures whose deformation fields are functions of bounded variation and the analogous  problems modelled through Sobolev fields. This issue has been in fact pointed out also by \cite{BN},  in the context of applications dealing with approximations of yield sets in Plasticity and for models dealing with dielectric breakdown.  
  
The aim of this paper consists, in fact, in determining the asymptotic behaviour, both for $\e \to 0$ and $p \to 1$ of $p-harmonic$ functions in thin domains of  the type $\Omega_\e: \omega \times \left(-\frac{\e}{2},\frac{\e}{2}\right)$, with prescribed boundary data $u_0$ on the lateral boundary of $\Omega_\e:= \partial \omega \times \left(-\frac{\e}{2},\frac{\e}{2}\right)$, i.e.
\begin{equation}\label{p-e-equation-Omega-e}
\left\{\begin{array}{ll}
-\Delta_p v:=-{\rm div}(|\nabla v|^{p- 2}\nabla v)=0  &\hbox{ in }\Omega_\e,\\
\\
v \equiv v_0 &\hbox{ on } \partial \omega \times \left(-\frac{\e}{2}, \frac{\e}{2}\right),\\
\\
|\nabla v|^{p-2}\nabla v \cdot \nu= 0 &\hbox{ on }  \omega \times\left\{-\frac{\e}{2}, \frac{\e}{2}\right\}, 
\end{array}
\right.
\end{equation} 
where $\nu$ denotes the unit normal to the top and the bottom of the cylinder.

We emphasize the fact that the thin domain is a cylinder, with cross section $\omega$, satisfying suitable regularity requirements, that will be clearly stated in the sequel (see in particular section \ref{LGdimred}). We assume in our subsequent analysis that the boundary is indeed piecewise $C^1$ (see beginning of section \ref{results} . 

Equivalently one may think of studying as $\e \to 0$ and $p \to 1$, the associated Dirichlet integral, namely
\begin{equation}\label{Dirichletintegral-p-e}
\displaystyle{\frac{1}{\e}\int_{\Omega_\e}|\nabla v|^p dx}
\end{equation}
among all the fields $v \in W^{1,p}(\Omega_\e)$, with $v \equiv v_0$ on $\partial \omega \times \left(-\frac{\e}{2}, \frac{\e}{2}\right)$.

Several issues appear at this point, (see for instance \cite{K} for a recent survey on the asymptotics as $p \to 1$): varying domains $\Omega_\e$, meaning of the equation \eqref{p-e-equation-Omega-e} for $p=1$, the possibility and the order with respect to which one may take the limits as $\e \to 0$ and $p \to 1$. The first issue, together with the recalls of the existing literature, is addressed in subsection \ref{RP}. Section \ref{gammaconvergence} is devoted to recall $\Gamma$-convergence, measures, functions of bounded variations, fractional Sobolev spaces, duality theory.  In section \ref{results}, through Proposition \ref{ourresult}, we give a meaning to the anisotropic $1$-Laplacian operator, and provide sufficient conditions ensuring that the limits $p \to 1$ and $\e \to 0$ commute (cf. Theorems \ref{Gammap0to10} and \ref{corollary33AGZAimeta} and Remark \ref{diagram} below). In section \ref{dualityapproach} we make the asymptotics in terms of differential problems via the duality, see Remark \ref{dualityrem} and Proposition \ref{prop1.2KTRobin}. 
Connections with the least gradient problem will be addressed in section \ref{LGdimred}, see Theorems \ref{asymptoticpto1equation} and \ref{lastleast}. This latter approach reveals its importance in determining the existence of solutions to the limit problems (as $p \to 1$)  of \eqref{p-e-equation-Omega-e}. In fact, in spite of lack of coerciveness of Problems \ref{1-e-problemBv} and \ref{1-0-problemBv} below, the solution exists provided suitable geometrical regularity assumptions on the cross section $\omega$ of the cylinder $\Omega_\e$.

\subsection{Rescaling and first results}\label{RP}
We start by rescaling our problem, thus eliminating the varying domains, putting the dependence on $\e$ in the expression of the equation and in its associated variational functional.

\noindent To this end, we fix our notations: let $\omega \subset \mathbb R^2$ be a bounded smooth domain which is piecewise $C^1$ (or whose boundary $\partial \omega$ has positive mean curvature (cf. \cite{SWZ})) and let $u_0 \in X(\partial \omega)$, where $X(\partial \omega)$ denotes a suitable function space to be defined later according to the different formulations of the problems.
For every $\e>0$, let $\Omega_\e$ be a cylindrical domain of cross section $\omega \subset \mathbb R^2$ and thickness $\e$, namely $\Omega_\e:= \omega \times \left(-\frac{\e}{2};\frac{\e}{2}\right)$.  
We reformulate \eqref{Dirichletintegral-p-e}, considering a $\frac{1}{\e}$-dilation in the transverse direction $x_3$.
\begin{equation}\label{changeofvariable}
\begin{array}{ll}
\Omega:=\Omega_1=\omega \times \left(-\frac{1}{2}, \frac{1}{2}\right),\\
\\
u(x_1,x_2,x_3):= v(x_1,x_2,\e x_3), \\
\\
u_0(x_1,x_2)=v_0(x_1,x_2).
\end{array}
\end{equation}

\noindent In the sequel we will denote the planar variables $(x_1,x_2)$ by $x_\alpha$ and for every $\xi_1,\xi_2,\xi_3 \in \mathbb R$, the vector $(\xi_1, \xi_2,\xi_3)$ will be denoted by $\xi_\alpha | \xi_3$. 

\noindent Thus for every $p>1$, \eqref{Dirichletintegral-p-e} is replaced by 
$I_{p,\e}:  W^{1,p}(\Omega) \to \mathbb R^+$, defined as 
\begin{equation}\label{p-e-functional}
\displaystyle{I_{p,\e}(u):= \int_\Omega \left|\nabla_\alpha u \left| \frac{1}{\e}\nabla_3 u\right.\right|^p dx_\alpha dx_3}.
\end{equation}

\noindent We can consider the following variational problem

\begin{equation}\label{p-e-problem}
\displaystyle{{\cal P}_{p,\e}:=\min\left\{ I_{p,\e}(u) : u\in W^{1,p}(\Omega), u \equiv u_0 \hbox{ on }\partial \omega \times \left(-\frac{1}{2}, \frac{1}{2}\right) \right\}.}
\end{equation} 

\noindent The Euler-Lagrange equation associated to \eqref{p-e-problem} is 
\begin{equation}\label{p-e-equation}
\left\{
\begin{array}{ll}
-\Delta_{p,\e}u = 0 &\hbox{ in }\Omega,\\
 \\
u \equiv u_0 &\hbox{ on }\partial \omega\times\left(-\frac{1}{2},\frac{1}{2}\right),
\\
\\
|Id_{\e}\nabla u\cdot \nabla u|^{\frac{p-2}{2}}(Id_{\e}\nabla u)\cdot \nu=0 &\hbox{ on }\omega \times\left\{-\frac{1}{2},\frac{1}{2}\right\},
\end{array}
\right.
\end{equation}
where $Id_\e\in \mathbb R^{3 \times 3}$ is the matrix defined as 
\begin{equation}\label{Ide}
(Id_{\e})_{i,j}= \left\{
\begin{array}{ll}
\frac{1}{\e^2} &\hbox{ if }i=j=3,\\
\\
\delta_{i,j}  &\hbox{ otherwise, }  
\end{array}\right.
\end{equation}

\noindent and $\Delta_{p,\e}$ is the simple anisotropic  ${p,\e}$-Laplace operator defined as
\begin{equation}\nonumber
\Delta_{p,\e} u= {\rm div}\left(\left|Id_\e \nabla u \cdot \nabla u\right|^{\frac{p-2}{2}}\ Id_\e \nabla u \right).
\end{equation} 

\noindent We are interested in the asymptotic behaviour of ${\cal P}_{p,\e}$ and ${\rm argmin}{\cal P}_{p,\e}$, (namely the behaviour of the weak solutions of \eqref{p-e-equation}) both  in the order  ($p \to 1$, $\e \to 0$) and in the reverse one, i.e. ($\e \to 0$, $p \to 1$). 

\noindent In order to exploit pre-existing analysis, we will discuss first the case $\e \to 0$ before $p \to 1$.

\noindent For $\e= 0$ we may introduce the $3D$ problem
\begin{equation}\label{p-0-equation-1}
\left\{
\begin{array}{ll} 
-\Delta_{\alpha, p,0} u :=-{\rm div_\alpha}(|\nabla_\alpha u|^{p-2}\nabla_\alpha u)=0 &\hbox{ in }\Omega,\\
\\
\nabla_3 u = 0&\hbox{ in }\Omega,\\
\\
u= u_0 &\hbox{ in }\partial \omega \times \left(-\frac{1}{2}, \frac{1}{2}\right),
\end{array}
\right.
\end{equation}
\noindent where the index $\alpha$ means that the derivatives are taken just with respect to $x_\alpha$.

Let $I_{p,0}:W^{1,p}(\omega) \to \mathbb R^+$, be the functional defined as
\begin{equation}\label{p-0-functional}
I_{p,0}(u):=\int_\omega \left|\nabla_\alpha u\right|^p dx,
\end{equation}
 and define 
the minimum problem
\begin{equation}\label{p-0-problem}
{\cal P}_{p, 0}:= \min\left\{I_{p,0}(u): u \in W^{1,p}(\Omega), u\equiv u_0 \hbox{ on } \partial \omega \right\}.
\end{equation}

\noindent It is well known since the pioneering papers \cite{ABP} and \cite{LDR} that, for every $p>1$, ${\cal P}_{p,\e}$ converges as $\e \to 0$ to ${\cal P}_{p,0}$, namely the functionals $I_{p,\e}$ $\Gamma$-converge with respect to $L^p$ strong topology, as $\e \to 0$ to $I_{p,0}$, (cf. \cite[Theorem 2]{LDR}). In particular, it has to be observed that the convexity of the space functions in \eqref{p-e-problem} and \eqref{p-0-problem}, the strict convexity and the coerciveness of $I_{p,\e}$ and $I_{p,0}$, due to the choice $p>1$, ensure that ${\cal P}_{p,\e}$ and ${\cal P}_{p,0}$ admit a unique solution, which, in turn is a weak solution of \eqref{p-e-equation} and \eqref{p-0-equation-1}, respectively, for instance when $u_0 \in X(\partial \omega)=W^{\frac{p-1}{p},p}(\partial \omega)$ (cf. subsection \ref{gammaconvergence} for the definition of the fractional Sobolev space). 

At this point it is worth, identifying the fields in $W^{1,p}(\Omega)$ with $\nabla_3 u=0$ with the fields in $W^{1,p}(\omega)$, to observe that \eqref{p-0-equation-1} admits the equivalent $2D$ formulation
\begin{equation}\label{p-0-equation-2}
\left\{
\begin{array}{ll}
-\Delta_{p,0}u:=-{\rm div}(|\nabla u|^{p-2}\nabla u)=0 & \hbox{ in }\omega,\\
\\
u = u_0 &\hbox{ on }\partial \omega. 
\end{array}
\right.
\end{equation}

For every fixed $\e >0$ and $p=1$, one can also define the following variational problems
\begin{equation}\label{1-e-problem}
{\cal P}_{1,\e}:=\inf\left\{I_{1,\e}(u): u \in W^{1,1}(\Omega), u \equiv u_0 \hbox{ on } \partial \omega \times\left(-\frac{1}{2}, \frac{1}{2}\right) \right\},
\end{equation}


\noindent where $I_{1,\e}: W^{1,1}(\Omega) \to \mathbb R^+$, is defined as
\begin{equation}\label{1-e-functional}
I_{1,\e}(u):= \int_\Omega \left|\nabla_\alpha u \left| \frac{1}{\e}\nabla_3 u\right.\right|dx.
\end{equation}
In principle $I_{1,\e}$ may not admit a solution in the Sobolev setting, because of many reasons, first of all the lack of coerciveness, but, as we shall see in section \ref{LGdimred}, also the choice of the space $X(\partial \omega)$, and the regularity of the  set $\Omega_\e$ play a crucial role. 

\noindent Consequently in order to guarantee a correct formulation for problem ${\cal P}_{1,\e}$ one needs to extend $I_{1,\e}$ (with abuse of notations) on the space of functions with bounded variation $BV(\O)$, taking care of the fact that $u=u_0$ outside the lateral boundary of $\Omega$, thus considering
\begin{equation}\label{1-e-functional1}
I_{1,\e}(u):= \left|D_\alpha u\left| \frac{1}{\e} D_3 u\right.\right|(\Omega)
\end{equation}
where the derivatives are intended in the sense of distributions and the integral is replaced by  the total variation. 
Consequently the minimum problem, after a relaxation procedure (cf. \cite[Theorem 3.4]{MZ}) , becomes
\begin{equation}\label{1-e-problemBv}
{\cal P}_{1,\e}=\min\left\{  \left|D_\alpha u\left| \frac{1}{\e} D_3 u\right.\right|(\Omega)+ \int_{\partial \omega \times\left(-\frac{1}{2},\frac{1}{2}\right)} |u-u_0 |d{\cal H}^2, u \in BV(\O)\right\}.
\end{equation}

\noindent Analogously one may consider the problem ${\cal P}_{p,\e}$ for $p=1$ and $\e=0$, thus, formally obtaining
\begin{equation}\label{1-0-problemBv}
{\cal P}_{1,0}=\min \left\{\left|D_\alpha u\right|(\Omega)+ \int_{\partial \omega \times\left(-\frac{1}{2},\frac{1}{2}\right)} |u-u_0 |d{\cal H}^2, u \in BV(\O), D_3u=0\right \},
\end{equation}

\noindent which arises from the relaxation in $BV(\Omega)$ (see \cite{ADM} and \cite{FM}) of the functional  $I_{1,0}: {\cal U} \to \mathbb R$, where ${\cal U}:=\{ u \in W^{1,1}(\Omega) \to \mathbb R^+: \nabla _3 u= 0, u \equiv u_0 \hbox{ on }\partial \omega \times \left(-\frac{1}{2},\frac{1}{2}\right)\}$,  defined as
\begin{equation}\nonumber
I_{1,0}(u):= \int_\Omega \left|\nabla_\alpha u \right|dx,
\end{equation} 
whose related miminum problem in ${\cal U}$ is
\begin{equation}\label{1-0-problem}
{\cal P}_{1,0}:=\inf\left\{I_{1,0}(u): u \in {\cal U} \right\}.
\end{equation}

Also the asymptotic behaviour of $I_{1,\e}$ as $\e \to 0$ is a consequence of the results in \cite{BF}. Namely in \cite[Theorem 1.1]{BF} (see also \cite{BZZ} in presence of bending moments) it has been proven that the almost minimizers of $\{{\cal P}_{1,\e}\}$ in \eqref{1-e-problem}, converge to the solutions (which, in general, may not exist,  cf. section \ref{LGdimred} for sufficient conditions) of ${\cal P}_{1,0}$ in \eqref{1-0-problemBv}.  

Summarizing the above results we can state the following

\begin{Proposition}\label{gammae}
Let $p \geq 1$ and let $u_0 \in X(\partial \omega)=W^{\frac{p-1}{p},p}(\partial \omega)$ for $p >1$ and let $u_0 \in X(\partial \omega)=L^1(\partial \omega)$ for $p=1$.  The families of functionals $\{I_{p,\e}\}$ in \eqref{p-e-functional} and $\{I_{1,\e}\}$ in \eqref{1-e-functional} defined in $\left\{u \in W^{1,p}(\Omega): u \equiv u_0 \right.$ $\left.\hbox{ on }\partial \omega \times\left(-\frac{1}{2},\frac{1}{2}\right)\right\}$, and $\left\{u \in W^{1,1}(\Omega): u \equiv u_0 \hbox{ on }\partial \omega \times\left(-\frac{1}{2},\frac{1}{2}\right)\right\}$, respectively, $\Gamma$-converge as $\e \to 0$, with respect to $L^p$ strong convergence and $L^1$ strong convergence, respectively, to $I_{p,0}$ in \eqref{p-0-functional} and to $\overline{I_{1,0}}(u)= |D u|(\omega)+ \int_{\partial \omega}|u-u_0|d{\cal H}^1$, where this latter functional describes the relaxed functional of $I_{1,0}$ in \eqref{1-0-funcional}, with respect to the $L^1$ strong convergence. 
\end{Proposition}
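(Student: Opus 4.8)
The plan is to assemble the statement from the two $\Gamma$-convergence results already recalled in the text, namely \cite[Theorem 2]{LDR} for the case $p>1$ and \cite[Theorem 1.1]{BF} for the case $p=1$, together with a routine identification of the relaxed boundary-penalised functional. For $p>1$ the functional $I_{p,\e}$ is a standard integral functional with $p$-growth and the transverse rescaling is exactly the one treated in the dimension-reduction literature; since $p>1$ the Dirichlet datum on the lateral boundary is retained in the limit (no boundary term appears) because Sobolev traces are continuous under the relevant convergence, and one obtains $\Gamma$-convergence in $L^p$ to $I_{p,0}$ on the affine space $\{u\in W^{1,p}(\omega):u=u_0\text{ on }\partial\omega\}$, after identifying $u$ with $\nabla_3 u=0$ fields. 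I would simply cite this and not reprove it.

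For $p=1$ the situation is the one handled in \cite{BF}: the functionals $I_{1,\e}$ in \eqref{1-e-functional1}, extended to $BV(\Omega)$, $\Gamma$-converge as $\e\to0$, with respect to $L^1$ strong convergence, to the functional whose minimisation gives \eqref{1-0-problemBv}. The first thing to do is to reconcile the ambient spaces: the statement of Proposition~\ref{gammae} speaks of functionals defined on $\{u\in W^{1,1}(\Omega):u\equiv u_0\text{ on }\partial\omega\times(-\frac12,\frac12)\}$, whereas the honest $\Gamma$-limit lives on $BV$. The standard device, which I would invoke explicitly, is to regard $I_{1,\e}$ as $+\infty$ outside $W^{1,1}$ and to note that its lower semicontinuous envelope with respect to $L^1$ convergence is precisely the functional in \eqref{1-e-functional1} plus the boundary term $\int_{\partial\omega\times(-\frac12,\frac12)}|u-u_0|\,d\mathcal H^2$ (this is the relaxation result \cite[Theorem 3.4]{MZ} already quoted). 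Since $\Gamma$-convergence is insensitive to replacing a functional by its relaxation, the $\Gamma$-limit of $\{I_{1,\e}\}$ is the same whether we start from the Sobolev functional or its $BV$ relaxation, and \cite[Theorem 1.1]{BF} then identifies it.

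It then remains to put the $\e=0$ limit in the form stated, i.e.\ to show that the functional appearing in \eqref{1-0-problemBv}, restricted to $\{u\in BV(\Omega):D_3u=0\}$, coincides with $\overline{I_{1,0}}(u)=|Du|(\omega)+\int_{\partial\omega}|u-u_0|\,d\mathcal H^1$ under the identification of $\nabla_3 u=0$ fields in $BV(\Omega)$ with fields in $BV(\omega)$. For such a $u$ one has $|D_\alpha u|(\Omega)=|Du|(\omega)$ by Fubini for measures (the cylinder has unit height), and $\int_{\partial\omega\times(-\frac12,\frac12)}|u-u_0|\,d\mathcal H^2=\int_{\partial\omega}|u-u_0|\,d\mathcal H^1$ since $u$ and $u_0$ do not depend on $x_3$ and the lateral boundary is $\partial\omega\times(-\frac12,\frac12)$ of $\mathcal H^2$-measure equal to $\mathcal H^1(\partial\omega)$ fibred over an interval of length one; this also matches the relaxation of $I_{1,0}$ on $\mathcal U$ described via \cite{ADM,FM}. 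This last identification is elementary; the only genuine content is the citation chain, so there is no real obstacle beyond bookkeeping — the subtlety worth flagging is merely the space mismatch (Sobolev versus $BV$) resolved by the relaxation argument above, and the need for the piecewise $C^1$ / positive mean curvature hypothesis on $\partial\omega$, which is what makes the $BV$ trace theory and the cited results applicable. I would close by remarking (as the text already does) that existence of minimisers for the limit problems is \emph{not} part of this proposition and is postponed to section~\ref{LGdimred}.
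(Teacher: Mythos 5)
Your proposal matches the paper's own treatment: Proposition \ref{gammae} is stated there as a summary of the cited results, namely \cite[Theorem 2]{LDR} (together with \cite{ABP}) for $p>1$ and \cite[Theorem 1.1]{BF} for $p=1$, combined with the relaxation identifications from \cite[Theorem 3.4]{MZ}, \cite{ADM}, \cite{FM} and the insensitivity of $\Gamma$-limits to relaxation (cf.\ Remark \ref{Rem34AGZAimeta} and \cite[Proposition 6.11]{DM}), exactly the citation chain and bookkeeping you describe. Your explicit identification of $|D_\alpha u|(\Omega)=|Du|(\omega)$ and of the lateral boundary term for $x_3$-independent fields is correct and only makes explicit what the paper leaves implicit (the positive mean curvature hypothesis you flag is not needed here, only later for the least gradient existence theory).
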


\begin{Remark}\label{minimizers}
We also recall that given $u_0 \in X(\partial \omega)=W^{\frac{p-1}{p},p}(\partial \omega)$, for $p>1$, the (unique) minimizers of \eqref{p-e-problem} converge, as $\e \to 0$, to the unique element of \eqref{p-0-problem}. For $p=1$ several choices are possible for the boundary datum $u_0$, but in some of these cases, the existence of elements solving \eqref{1-e-problem}, \eqref{1-e-problemBv} and \eqref{1-0-problemBv} is not guaranteed (cf. \cite{SWZ, SZ}), as we will discuss in section \ref{LGdimred}.
\end{Remark}

The asymptotics for $p \to 1$ will be discussed in theorems \ref{Gammap0to10} and \ref{corollary33AGZAimeta}.

\section{$\Gamma$-convergence, measures, functions of bounded variation, trace spaces, and recalls of duality theory}\label{gammaconvergence}
We give a brief survey of $\Gamma$-convergence,  functions of bounded variation and trace spaces. For a detailed treatment of these subjects,
we refer to \cite{DM}, \cite{AFP1}, and \cite{A, B} respectively.

Let $(X,d)$ be a metric space.

\begin{Definition}
[$\Gamma$-convergence for a sequence of functionals]\label{gammasequence} Let
$\{J_{n}\}$ be a sequence of functionals defined on $X$ with values in
$\overline{\mathbb{R}}$. The functional $J:X\rightarrow\overline{\mathbb{R}}$
is said to be the ${\Gamma}-\lim\inf$ (resp. ${\Gamma}-\lim\sup$) of
$\{J_{n}\}$ with respect to the metric $d$ if for every $u\in X$
\[
{J(u)=\inf\left\{  \liminf_{n\rightarrow\infty}J_{n}(u_{n}):u_{n}\in
X,u_{n}\rightarrow u\text{ in }X\right\}  \;\;(}\text{resp. }{\limsup
_{n\rightarrow\infty}).}%
\]
Thus we write
\[
{J=\Gamma-\liminf_{n\rightarrow\infty}J_{n}~}\text{(resp. }{J=\Gamma
-\limsup_{n\rightarrow\infty}J_{n}).}%
\]
Moreover, the functional $J$ is said to be the $\Gamma-$limit of $\{J_{n}\}$
if
\[
{J=\Gamma-\liminf_{n\rightarrow\infty}J_{n}=\Gamma-\limsup_{n\rightarrow
\infty}J_{n},}%
\]
and we may write
\[
{J=\Gamma-\lim_{n\rightarrow\infty}J_{n}.}%
\]

\end{Definition}

For every ${\varepsilon}>0$, let $J_{{\varepsilon}}$ be a functional over $X$
with values in $\overline{\mathbb{R}}$, $J_{\varepsilon}:X \to\overline
{\mathbb{R}}.$

\begin{Definition}
[$\Gamma$-convergence for a family of functionals]\label{gammafamily} A
functional $J:X\rightarrow\overline{\mathbb{R}}$ is said to be the ${\Gamma}%
$-liminf (resp. ${\Gamma}$-limsup or ${\Gamma}$-limit) of $\{J_{\varepsilon
}\}$ with respect to the metric $d$, as ${\varepsilon}\rightarrow0^{+}$, if
for every sequence ${\varepsilon}_{n}\rightarrow0^{+}$
\[
{J=\Gamma-\liminf_{n\rightarrow\infty}J_{{\varepsilon}_{n}}\,(}\text{resp.
}{J=\Gamma-\limsup_{n\rightarrow\infty}J_{{\varepsilon}_{n}}~}\text{or
}{J={\Gamma}-\lim_{n\rightarrow\infty}J_{{\varepsilon}_{n}}),}%
\]
and we write
\[
{J=\Gamma-\liminf_{{\varepsilon}\rightarrow0^{+}}J_{\varepsilon}%
\,(}\text{{resp. }}{J=\Gamma-\limsup_{{\varepsilon}\rightarrow0^{+}%
}J_{{\varepsilon}}}\text{ or }{J={\Gamma}-\lim_{{\varepsilon}\rightarrow0^{+}%
}J_{\varepsilon}).}%
\]

\end{Definition}

Next we state the Urysohn property for $\Gamma$-convergence in a metric space.

\begin{Proposition}
\label{Prop2.3BF}Given $J:X\rightarrow\overline{\mathbb{R}}$ and
$\varepsilon_{n}\rightarrow0^{+},$ $J=\Gamma-\lim\limits_{n\rightarrow\infty
}J_{\varepsilon_{n}}$ if and only if for every subsequence $\left\{
\varepsilon_{n_{j}}\right\}  \equiv\left\{  \varepsilon_{j}\right\}  $ there
exists a further subsequence $\left\{  \varepsilon_{n_{j_{k}}}\right\}
\equiv\left\{  \varepsilon_{k}\right\}  $ such that $\left\{  J_{\varepsilon
_{k}}\right\}  $ $\Gamma-$converges to $J.$
\end{Proposition}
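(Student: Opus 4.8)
\textbf{Proof plan for Proposition \ref{Prop2.3BF} (Urysohn property).}

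The plan is to prove the two implications separately, relying only on the definition of $\Gamma$-convergence for sequences (Definition \ref{gammasequence}). For the forward direction, assume $J = \Gamma\text{-}\lim_{n\to\infty} J_{\varepsilon_n}$. Given an arbitrary subsequence $\{\varepsilon_j\}$, I would observe that the $\liminf$ inequality (the lower bound: for every $u$ and every $u_n \to u$, $\liminf_n J_{\varepsilon_n}(u_n) \ge J(u)$) passes immediately to subsequences, since a subsequence of a sequence converging to $u$ still converges to $u$ and its $\liminf$ is $\ge$ the full $\liminf$. The $\limsup$ inequality (the recovery-sequence bound: for every $u$ there is $u_n \to u$ with $\limsup_n J_{\varepsilon_n}(u_n) \le J(u)$) also restricts to any subsequence by simply restricting the recovery sequence. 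Hence $\{J_{\varepsilon_j}\}$ already $\Gamma$-converges to $J$, and the required further subsequence $\{\varepsilon_k\}$ can be taken to be the whole of $\{\varepsilon_j\}$.

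The reverse direction is where the real content lies, and I expect it to be the main obstacle. Suppose that $J$ is \emph{not} the $\Gamma$-limit of $\{J_{\varepsilon_n}\}$; I want to produce a subsequence none of whose further subsequences $\Gamma$-converges to $J$. Failure of $\Gamma$-convergence means that at least one of the two defining inequalities fails at some point $u \in X$. If the $\liminf$ inequality fails, there is a point $u$, a sequence $u_n \to u$, and a subsequence along which $J_{\varepsilon_{n}}(u_{n}) \to \ell < J(u)$; I would pass to that subsequence $\{\varepsilon_j\}$ and note that \emph{every} further subsequence still carries the test sequence $u_j \to u$ with values tending to $\ell < J(u)$, so the $\liminf$ inequality — hence $\Gamma$-convergence to $J$ — fails along every further subsequence. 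If instead the $\limsup$ inequality fails, there is a point $u$ such that every sequence $u_n \to u$ satisfies $\limsup_n J_{\varepsilon_n}(u_n) > J(u)$; quantitatively, $\inf\{\limsup_n J_{\varepsilon_n}(u_n) : u_n \to u\} = J(u) + \delta$ for some $\delta > 0$ (allowing $\delta = +\infty$). I would then choose a subsequence $\{\varepsilon_j\}$ realizing, or approaching, this infimum — e.g. pick $u^{(m)}_n \to u$ with $\limsup_n J_{\varepsilon_n}(u^{(m)}_n) < J(u) + \delta + 1/m$ and diagonalize — so that along $\{\varepsilon_j\}$ and along every further subsequence, the best achievable $\limsup$ over recovery sequences is still $\ge J(u) + \delta > J(u)$, obstructing $\Gamma$-convergence to $J$.

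The delicate point in the $\limsup$ case is the bookkeeping of the double infimum: ``for every recovery sequence the $\limsup$ is too big'' must be converted into ``along a fixed subsequence, no recovery sequence does well enough,'' and this requires a diagonal extraction so that the deficiency $\delta$ is inherited by all further subsequences rather than being destroyed by passing to a cleverly chosen sub-subsequence. Once this is set up correctly, the contrapositive is complete: if every subsequence has a further subsequence $\Gamma$-converging to $J$, then neither defining inequality can fail, so $J = \Gamma\text{-}\lim_n J_{\varepsilon_n}$. I would present the argument as the contrapositive of the reverse implication, which keeps the logic clean, and I would remark that this is the standard Urysohn-type characterization (a sequence converges iff every subsequence has a convergent-to-the-same-limit further subsequence), here made rigorous for the two one-sided $\Gamma$-inequalities.
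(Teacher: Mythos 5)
Your forward implication and the first (liminf) case of the reverse implication are fine: restricting a recovery sequence handles the upper bound, and for the lower bound one extends an arbitrary subsequence-indexed test sequence to a full one (e.g.\ by the constant $u$) and uses that the liminf of a sequence is bounded above by the liminf along any subsequence — routine. The genuine gap is in your limsup case, exactly at the point you yourself flag as delicate. A minor issue first: from ``every $u_n\to u$ has $\limsup_n J_{\varepsilon_n}(u_n)>J(u)$'' you cannot deduce a uniform gap $\delta>0$, since the infimum of these limsups may equal $J(u)$ without being attained; this is harmless only because, with Definition \ref{gammasequence}, failure of the upper half \emph{means} $\Gamma\hbox{-}\limsup_n J_{\varepsilon_n}(u)>J(u)$ (the case $<J(u)$ forces $\Gamma\hbox{-}\liminf<J(u)$ and is absorbed into your first case), so $\delta$ comes from the definition, not from the pointwise strict inequality. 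The serious issue is the next step: diagonalizing the near-optimal recovery sequences $u^{(m)}$ produces a single test sequence realizing the infimum for the \emph{full} family — it does not produce a subsequence of indices — and the key assertion that along the chosen $\{\varepsilon_j\}$ ``and along every further subsequence the best achievable limsup is still $\ge J(u)+\delta$'' is precisely the nontrivial point and does not follow. Passing to a subsequence of indices can strictly decrease the $\Gamma$-limsup, because a further subsequence admits new test sequences that are not restrictions of yours: e.g.\ take $J_{\varepsilon_n}\equiv 1$ for $n$ odd and, for $n$ even, $J_{\varepsilon_n}$ equal to $0$ at a single point $x_n\to u$ and $1$ elsewhere; the full $\Gamma$-limsup at $u$ is $1$, an optimal test sequence may take the value $1$ at every index, yet the even-indexed subfamily has $\Gamma$-limsup $0$ at $u$. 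So selecting the indices where your near-optimal sequence is large does not preclude a well-behaved further subsequence, and the contradiction is not reached.

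The standard repair is to convert the failure at $u$ into a statement that is manifestly inherited by all further subsequences, via the neighborhood form of the upper $\Gamma$-limit: if for every $r>0$ and $\eta>0$ one had $\inf_{B(u,r)}J_{\varepsilon_n}<J(u)+\eta$ for all sufficiently large $n$, then choosing points in balls of radius $1/k$ with values below $J(u)+1/k$ would yield a full test sequence with $\limsup\le J(u)$, contradicting $\Gamma\hbox{-}\limsup_n J_{\varepsilon_n}(u)>J(u)$. Hence there exist $r>0$, $\delta>0$ and a subsequence $\{\varepsilon_j\}$ with $J_{\varepsilon_j}\ge J(u)+\delta$ on all of $B(u,r)$ for every $j$; this uniform bound passes verbatim to every further subsequence, along which any test sequence eventually lies in $B(u,r)$, so even its liminf is $\ge J(u)+\delta>J(u)$ and no further subsequence can $\Gamma$-converge to $J$. (For comparison: the paper states this proposition as a recalled fact from \cite{DM} and gives no proof, so the argument has to stand on its own; with the neighborhood extraction inserted, your two-case contrapositive becomes the standard one.)
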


In addition, if the metric space is also separable the following compactness
property holds.

\begin{Proposition}
\label{Thm2.4BF} Each sequence $\varepsilon_{n}\rightarrow0^{+}$ has a
subsequence $\left\{  \varepsilon_{n_{j}}\right\}  \equiv\left\{
\varepsilon_{j}\right\}  $ such that $\Gamma-\lim\limits_{j\rightarrow\infty
}J_{\varepsilon_{j}}$ exists.
\end{Proposition}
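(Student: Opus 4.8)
The plan is to prove the classical compactness theorem for $\Gamma$-convergence by a diagonal extraction over a countable base of the topology, in the spirit of Dal Maso's treatment \cite{DM}. Since the statement concerns $\Gamma-\lim_{j\to\infty}J_{\varepsilon_j}$ in the sense of Definition \ref{gammasequence}, it suffices to exhibit, for a given sequence $\varepsilon_n\to0^+$, a subsequence along which the $\Gamma-\liminf$ and the $\Gamma-\limsup$ of the corresponding functionals coincide.

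First I would fix a countable dense subset $D\subset X$ and set $\mathcal B:=\{B(x,r):x\in D,\ r\in\mathbb Q,\ r>0\}$, which by separability is a countable base for the metric topology. Enumerating $\mathcal B=\{B_1,B_2,\dots\}$, for each $B_i$ the sequence $\big(\inf_{u\in B_i}J_{\varepsilon_n}(u)\big)_n$ lives in the sequentially compact space $\overline{\mathbb R}$, so a successive extraction followed by a diagonal argument yields one subsequence $\{\varepsilon_{n_j}\}=:\{\varepsilon_j\}$ along which $\alpha(B):=\lim_{j\to\infty}\inf_{u\in B}J_{\varepsilon_j}(u)$ exists in $\overline{\mathbb R}$ for every $B\in\mathcal B$. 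I would then define the candidate limit $J:X\to\overline{\mathbb R}$ by $J(u):=\sup\{\alpha(B):B\in\mathcal B,\ u\in B\}$ and show $J=\Gamma-\lim_j J_{\varepsilon_j}$.

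The $\liminf$ inequality is straightforward: if $u_j\to u$ then for every $B\in\mathcal B$ with $u\in B$ one has $u_j\in B$ for $j$ large, hence $J_{\varepsilon_j}(u_j)\ge\inf_{v\in B}J_{\varepsilon_j}(v)$, so $\liminf_j J_{\varepsilon_j}(u_j)\ge\alpha(B)$; taking the supremum over such $B$ gives $\liminf_j J_{\varepsilon_j}(u_j)\ge J(u)$, i.e. $\Gamma-\liminf_j J_{\varepsilon_j}\ge J$.

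The main point, and the step I expect to require the most care, is the $\limsup$ inequality, namely the construction of a recovery sequence $u_j\to u$ with $\limsup_j J_{\varepsilon_j}(u_j)\le J(u)$. Here I would exploit that the balls centered at $D$ are cofinal: for each $m$ choose $x_m\in D$ and $r_m\in\mathbb Q$ with $d(u,x_m)<r_m$ and $B(x_m,r_m)\subset B(u,1/m)$, so that $B_m:=B(x_m,r_m)\in\mathcal B$ satisfies $u\in B_m$ and the diameters of the $B_m$ shrink to $0$. Since $\alpha(B_m)\le J(u)$, for each $m$ I pick, for each $j$, a point $v_{m,j}\in B_m$ almost optimal for $\inf_{v\in B_m}J_{\varepsilon_j}(v)$, so that $\limsup_j J_{\varepsilon_j}(v_{m,j})\le\alpha(B_m)+\tfrac1m\le J(u)+\tfrac1m$ (the cases $J(u)=+\infty$ and $\inf_{v\in B_m}J_{\varepsilon_j}(v)=-\infty$ being handled trivially by suitable modifications). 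A diagonalization in $m$ and $j$ — choosing an increasing sequence of thresholds $j_m$ and setting $u_j:=v_{m,j}$ for $j_m\le j<j_{m+1}$ — produces $u_j\to u$ with $\limsup_j J_{\varepsilon_j}(u_j)\le J(u)$, hence $\Gamma-\limsup_j J_{\varepsilon_j}\le J$. Combining this with the previous inequality and the general fact $\Gamma-\liminf\le\Gamma-\limsup$ yields $J=\Gamma-\lim_j J_{\varepsilon_j}$. The only genuine subtleties are the bookkeeping with $\pm\infty$ values and the careful selection of the shrinking balls $B_m$ within the fixed countable base $\mathcal B$; the remaining extractions are routine.
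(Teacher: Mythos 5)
Your argument is correct: the diagonal extraction over the countable base of rational balls, the definition $J(u)=\sup\{\alpha(B):u\in B\}$, and the two inequalities (the elementary $\Gamma$-liminf bound and the recovery sequence built from nearly optimal points in shrinking balls, with the usual care for $\pm\infty$) together give exactly the compactness statement. The paper states this proposition without proof, as a recalled fact from the cited literature (Dal Maso's book), and your proof is essentially the standard argument given there, so there is nothing to correct or to compare beyond noting that separability is used precisely where the paper assumes it.
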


\begin{Proposition}
\label{Prop2.5BF} If $J=\Gamma-\underset{\varepsilon\rightarrow0^{+}}{\lim
\inf}J_{\varepsilon}$ (or $\Gamma-\underset{\varepsilon\rightarrow0^{+}%
}{\lim\sup}\,J_{\varepsilon})$ then $J$ is lower semicontinuous (with respect
to the metric $d).$

\end{Proposition}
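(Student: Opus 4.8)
The plan is to reduce the assertion to the classical fact that the $\Gamma$-lower limit (resp.\ $\Gamma$-upper limit) of an arbitrary \emph{sequence} of functionals on a metric space is $d$-lower semicontinuous, and then to prove that fact by a diagonalization argument. For the reduction, fix an arbitrary sequence $\varepsilon_n\to0^+$; by Definition~\ref{gammafamily} we have $J=\Gamma\text{-}\liminf_{n\to\infty}J_{\varepsilon_n}$ (resp.\ $\Gamma\text{-}\limsup$), so it suffices to prove that $J$, now regarded as the $\Gamma$-lower (resp.\ upper) limit of the sequence $\{J_{\varepsilon_n}\}$, is $d$-lower semicontinuous.

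So let $u_k\to u$ in $X$; we want $J(u)\le\liminf_{k\to\infty}J(u_k)$. Passing to a subsequence we may assume the right-hand side is an actual limit $\ell\in\overline{\mathbb{R}}$, and if $\ell=+\infty$ there is nothing to prove, so assume $\ell<+\infty$. By the definition of the $\Gamma$-lower limit as the infimum in Definition~\ref{gammasequence}, for each $k$ there is a sequence $(w^k_n)_n$ with $w^k_n\to u_k$ in $X$ as $n\to\infty$ and $\liminf_{n\to\infty}J_{\varepsilon_n}(w^k_n)\le J(u_k)+\tfrac1k$. Hence there is a strictly increasing map $j\mapsto m^k_j$ and an index $j_k$ such that, for all $j\ge j_k$, both $d(w^k_{m^k_j},u_k)<\tfrac1k$ and $J_{\varepsilon_{m^k_j}}(w^k_{m^k_j})\le J(u_k)+\tfrac2k$. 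Choosing inductively indices $n_1<n_2<\cdots$ with $n_k$ lying in the (infinite) set $\{m^k_j:j\ge j_k\}$, setting $z_{n_k}$ to be the corresponding element $w^k_{\,\cdot\,}$, and putting $z_n:=u$ for all other $n$, one gets $z_n\to u$ in $X$ (since $d(z_{n_k},u)\le d(z_{n_k},u_k)+d(u_k,u)<\tfrac1k+d(u_k,u)\to0$) and, along the subsequence $(n_k)$, $J_{\varepsilon_{n_k}}(z_{n_k})\le J(u_k)+\tfrac2k$. Therefore, by the very definition of $\Gamma\text{-}\liminf$,
\[ J(u)\;\le\;\liminf_{n\to\infty}J_{\varepsilon_n}(z_n)\;\le\;\liminf_{k\to\infty}J_{\varepsilon_{n_k}}(z_{n_k})\;\le\;\liminf_{k\to\infty}\Big(J(u_k)+\tfrac2k\Big)\;=\;\ell, \]
which is the desired inequality. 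The case $J=\Gamma\text{-}\limsup_{\varepsilon\to0^+}J_\varepsilon$ is handled in the same way, and is in fact slightly easier: there the near-optimality $J_{\varepsilon_n}(w^k_n)\le J(u_k)+\tfrac2k$ (together with $d(w^k_n,u_k)<\tfrac1k$) holds for \emph{all} $n\ge N_k$, not merely along a subsequence, so one may take $N_1<N_2<\cdots$ and simply set $z_n:=w^k_n$ on each block $N_k\le n<N_{k+1}$, obtaining $z_n\to u$ and $\limsup_{n\to\infty}J_{\varepsilon_n}(z_n)\le\ell$.

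I expect the only delicate point to be precisely this diagonalization in the $\liminf$ case: the bound $\liminf_n J_{\varepsilon_n}(w^k_n)\le J(u_k)+\tfrac1k$ controls $J_{\varepsilon_n}(w^k_n)$ only along a $k$-dependent subsequence of indices $n$, so one must interleave these subsequences (filling the remaining indices harmlessly, e.g.\ with the constant value $u$) so as to produce a single competitor sequence indexed by all $n$ that still converges to $u$ and whose $\liminf$ is bounded by $\ell$; everything else is bookkeeping with the triangle inequality and the definitions. Alternatively, one can bypass the diagonalization by invoking the localization formula $\big(\Gamma\text{-}\liminf_n J_{\varepsilon_n}\big)(u)=\sup_{\delta>0}\liminf_{n\to\infty}\inf_{v\in B(u,\delta)}J_{\varepsilon_n}(v)$: whenever $d(u',u)<\delta/2$ one has $B(u',\delta/2)\subset B(u,\delta)$, so the inner quantity at scale $\delta$ centred at $u$ is dominated by the one at scale $\delta/2$ centred at $u'$, and letting $u'\to u$ and then taking the supremum over $\delta$ exhibits $J$ as a supremum of lower semicontinuous functions.
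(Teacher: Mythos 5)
Your argument is correct; note, though, that the paper does not prove Proposition \ref{Prop2.5BF} at all: it is recalled as a standard fact of $\Gamma$-convergence theory, with \cite{DM} as the background reference. Your reduction to a single sequence $\varepsilon_n\to0^+$ is exactly what Definition \ref{gammafamily} allows, and the diagonal construction for the $\liminf$ case --- picking, for each $k$, infinitely many indices $n$ along which both $d(w^k_n,u_k)<1/k$ and $J_{\varepsilon_n}(w^k_n)\le J(u_k)+2/k$, interleaving them into one competitor $(z_n)$ converging to $u$ (filling the gaps with $u$), and using that the liminf of the full sequence is bounded by the liminf along the chosen subsequence --- is sound, as is the block construction in the $\limsup$ case. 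The textbook route, and the one implicit in \cite{DM}, is your ``alternative'': represent the $\Gamma$-lower (upper) limit as $\sup_{\delta>0}\liminf_n\inf_{B(u,\delta)}J_{\varepsilon_n}$ and exploit the monotone inclusion of balls; that avoids diagonalization but requires first proving that this localized formula coincides with the sequential Definition \ref{gammasequence} in a metric space, so your direct argument from the paper's own definition is arguably the more self-contained of the two. One small repair: when $J(u_k)=-\infty$ the selection ``with $\liminf_n J_{\varepsilon_n}(w^k_n)\le J(u_k)+1/k$'' is meaningless as written, since the infimum defining $J(u_k)$ need not be attained; replace the threshold by $\max\{J(u_k),-k\}+1/k$ (equivalently, choose a sequence whose liminf/limsup is at most $-k$), after which the same computation gives $J(u)\le\ell$ also when $\ell=-\infty$.
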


We conclude with a result dealing with the convergence of minimizers and minimum points, \cite[Corollary 7.17]{DM}
\begin{Theorem}\label{cor7.17DM}
For every $\varepsilon \in \mathbb N$, let $\{x_\varepsilon\}$ a minimizer of $J_\e$ in $X$. If $\{x_\e\}$ converge to $x$ in $X$, then $x$ is a minimizer of $\Gamma-\liminf_ \e J_\e$ and $\Gamma-\limsup_\e J_\e$ in $X$ and 
$$
(\Gamma-\liminf_ \e J_\e) (x)= \liminf_\e J_\e(x_\e), \;\;\;\;\;\;\; (\Gamma-\limsup_ \e J_\e)(x)=\limsup_\e J_\e(x_\e).
$$
\end{Theorem}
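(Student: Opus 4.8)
The statement is a soft consequence of the very definitions of $\Gamma\text{-}\liminf$ and $\Gamma\text{-}\limsup$ (Definition \ref{gammasequence}) together with the minimality of the $x_\e$'s, so I would argue directly rather than quote \cite{DM}. Write $J' := \Gamma\text{-}\liminf_{\e} J_\e$ and $J'' := \Gamma\text{-}\limsup_\e J_\e$, and recall that $J' \le J''$ pointwise on $X$. The plan is to establish the two equalities
$$
J'(x) = \liminf_\e J_\e(x_\e), \qquad J''(x) = \limsup_\e J_\e(x_\e),
$$
and the minimality $J'(x) \le J'(y)$, $J''(x) \le J''(y)$ for every $y \in X$, by playing off the hypothesis $x_\e \to x$ against the inequalities $J_\e(x_\e) \le J_\e(z_\e)$, valid for every competitor sequence $\{z_\e\}$.

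First I would record the half of each equality that is built into the definition: since $x_\e \to x$ in $X$, the sequence $\{x_\e\}$ is itself admissible in the infima defining $J'(x)$ and $J''(x)$, hence
$$
J'(x) \le \liminf_\e J_\e(x_\e), \qquad J''(x) \le \limsup_\e J_\e(x_\e).
$$
For the reverse inequalities and minimality I would fix $y \in X$ and $\d > 0$ and, using that $J''(y)$ is an infimum over recovery sequences, pick $y_\e \to y$ with $\limsup_\e J_\e(y_\e) \le J''(y) + \d$. Minimality gives $J_\e(x_\e) \le J_\e(y_\e)$ for all $\e$, and monotonicity of $\limsup$ under termwise inequalities yields $\limsup_\e J_\e(x_\e) \le J''(y) + \d$. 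Letting $\d \to 0^+$ gives $\limsup_\e J_\e(x_\e) \le J''(y)$ for every $y \in X$; taking $y = x$ and combining with the previous step upgrades this to $J''(x) = \limsup_\e J_\e(x_\e)$, and keeping $y$ arbitrary shows $x$ minimizes $J''$.

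The same computation with $\liminf$ in place of $\limsup$ handles $J'$: for $y \in X$ and $\d > 0$ pick $y_\e \to y$ with $\liminf_\e J_\e(y_\e) \le J'(y) + \d$, use $J_\e(x_\e) \le J_\e(y_\e)$ and monotonicity of $\liminf$ to get $\liminf_\e J_\e(x_\e) \le J'(y) + \d$, and send $\d \to 0^+$; with $y = x$ this promotes the inequality $J'(x) \le \liminf_\e J_\e(x_\e)$ to an equality, and with general $y$ it gives that $x$ minimizes $J'$.

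I do not anticipate a genuine obstacle here. The only point deserving care is that the infima defining $J'$ and $J''$ need not be attained, which is exactly why one works with an arbitrary tolerance $\d > 0$ and near-optimal competitor sequences rather than with exact recovery sequences; no compactness, lower semicontinuity, or structural assumption on $\{J_\e\}$ enters beyond the hypothesis $x_\e \to x$. (Everything above is written for a sequence of indices; when $\{J_\e\}$ is a family in the sense of Definition \ref{gammafamily}, the argument applies verbatim along each sequence $\varepsilon_n \to 0^+$.)
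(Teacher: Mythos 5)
Your proof is correct. Note that the paper itself offers no argument for this statement: it is quoted verbatim as \cite[Corollary 7.17]{DM}, so there is no in-paper proof to compare against; your direct derivation from Definition \ref{gammasequence} is exactly the standard argument behind that corollary (admissibility of $\{x_\e\}$ itself gives $J'(x)\le\liminf_\e J_\e(x_\e)$ and $J''(x)\le\limsup_\e J_\e(x_\e)$; near-optimal competitor sequences plus $J_\e(x_\e)\le J_\e(y_\e)$ give the reverse bounds and the minimality at every $y$). The only point to tidy up is the use of ``$J'(y)+\d$'' and ``$J''(y)+\d$'' when these infima take the values $+\infty$ or $-\infty$: if $J''(y)=+\infty$ the inequality $\limsup_\e J_\e(x_\e)\le J''(y)$ is vacuous, and if $J''(y)=-\infty$ one chooses $y_\e\to y$ with $\limsup_\e J_\e(y_\e)\le -M$ for arbitrary $M$; with that routine case distinction (which your closing remark essentially anticipates) the argument is complete, and it indeed uses nothing beyond the definitions, the termwise minimality, and monotonicity of $\liminf$/$\limsup$.
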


We also recall a result that may  be found in \cite{DGL}.

\begin{Proposition}\label{prop28MZ}
Let $O$ be a bounded open set in $\mathbb R^N$, and for every sequence $p>1$, let $\{\mu_p\}_p$ and $\mu$ be non-negative Borel measures on $\Omega$ such that
$$
\left\{
\begin{array}{ll}
\displaystyle{\limsup_{p\to 1}\mu_p(O)\leq \mu(O)< +\infty,} \\
\\
\displaystyle{\limsup_{p \to 1}\mu_p(A)\geq \mu(A) \hbox{ for every open subset }A \hbox{ of }O.}
\end{array}
\right.
$$
Then for every $ \varphi \in C(\overline{O})$ we have
$$
\displaystyle{\lim_{p \to 1} \int_O \varphi d \mu_p = \int_O \varphi d\mu.}
$$ 
\end{Proposition}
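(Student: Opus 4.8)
The plan is to reduce the statement to a standard Portmanteau-type argument for weak-* convergence of measures, using the two hypotheses to pin down both $\limsup$ and $\liminf$ of $\mu_p(A)$ for open sets. First I would observe that the two displayed conditions immediately give, for every open $A\subseteq O$, the chain
\[
\mu(A)\le \limsup_{p\to1}\mu_p(A)\le \limsup_{p\to1}\mu_p(O)\le \mu(O),
\]
so in particular $\sup_p\mu_p(O)<\infty$ eventually and the family $\{\mu_p\}$ is bounded in the total-variation norm. Hence along any sequence $p_n\to1$ we may extract a subsequence with $\mu_{p_n}\weakstar\lambda$ for some non-negative Borel (Radon) measure $\lambda$ on $\overline O$ (or on $O$, after checking no mass escapes — see below). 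The goal is then to show $\lambda=\mu$, independently of the subsequence, which yields convergence of the full family and the desired conclusion $\int_O\varphi\,d\mu_p\to\int_O\varphi\,d\mu$ for all $\varphi\in C(\overline O)$.

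To identify $\lambda$ with $\mu$ I would use the second hypothesis together with lower semicontinuity of mass on open sets under weak-* convergence: for every open $A\subseteq O$,
\[
\lambda(A)\le\liminf_{n}\mu_{p_n}(A)\le\limsup_{n}\mu_{p_n}(A)\le\limsup_{p\to1}\mu_p(A),
\]
wait — the inequality I actually need runs the other way for the upper bound, so let me be careful: weak-* convergence gives $\lambda(A)\le\liminf_n\mu_{p_n}(A)$ for $A$ open and $\lambda(K)\ge\limsup_n\mu_{p_n}(K)$ for $K$ compact. Combined with hypothesis (ii), which says $\limsup_p\mu_p(A)\ge\mu(A)$, and hypothesis (i), which controls total mass, one gets $\lambda(A)\le\mu(A)$ is \emph{not} immediate; instead the natural route is: from (ii) applied to $A$ and to $O\setminus\overline{A'}$ for $A'\ss A$, together with (i), deduce $\liminf_p\mu_p(A)\ge\mu(A)$ for open $A$, hence $\lambda(A)\ge\mu(A)$; and from (i) plus (ii) applied to a slightly larger open set deduce the reverse $\lambda(\overline A)\le\mu(\overline A)$. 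Since both $\lambda$ and $\mu$ are Radon, agreement on open sets (equivalently, on a sufficiently rich class) forces $\lambda=\mu$. A cleaner packaging: show directly that $\mu_p(A)\to\mu(A)$ for every open $A$ with $\mu(\partial A)=0$, by sandwiching $\liminf\mu_p(A)\ge\mu(A)\ge\limsup\mu_p(\overline A)\ge\limsup\mu_p(A)$, the first inequality from (ii) and the last-but-one from (i) applied on the open set $O\setminus\partial A$... and then invoke the standard equivalence of this with $\int\varphi\,d\mu_p\to\int\varphi\,d\mu$.

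The main obstacle, and the step deserving the most care, is handling the boundary $\partial O$: one must rule out mass of $\mu_p$ concentrating near $\partial O$ in the limit, since the test functions $\varphi$ range over $C(\overline O)$ rather than $C_c(O)$. Hypothesis (i) with $\limsup_p\mu_p(O)\le\mu(O)<\infty$ and $\mu$ concentrated on $O$ is exactly what prevents this: if a subsequential weak-* limit $\lambda$ on $\overline O$ had $\lambda(\partial O)>0$, then for any open $A\ss O$ one would get $\mu(A)\le\lambda(A)\le\lambda(O)\le\limsup\mu_p(O)-\lambda(\partial O)\le\mu(O)-\lambda(\partial O)$, and taking $A\uparrow O$ yields $\mu(O)\le\mu(O)-\lambda(\partial O)$, forcing $\lambda(\partial O)=0$. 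Once tightness on $O$ is secured, the identification $\lambda=\mu$ and the convergence of integrals against all of $C(\overline O)$ follow from the portmanteau theorem. The remaining arguments — extraction of subsequences, lower/upper semicontinuity of mass, the Urysohn-type passage from "every subsequence has a sub-subsequence converging to $\mu$" to "the full family converges" — are routine and mirror Proposition \ref{Prop2.3BF}.
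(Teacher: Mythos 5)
The paper itself gives no proof of Proposition \ref{prop28MZ} (it is simply recalled from \cite{DGL}), so your argument can only be judged on its own merits. The overall skeleton you propose -- mass bound from the first hypothesis, weak-$\ast$ subsequential limits $\lambda$ on $\overline O$, identification $\lambda=\mu$ with no mass on $\partial O$, then the Urysohn property -- is a viable route. But the step on which everything hinges, namely upgrading the hypothesis $\limsup_{p\to 1}\mu_p(A)\ge\mu(A)$ to $\liminf_{p\to 1}\mu_p(A)\ge\mu(A)$, is exactly where your write-up breaks down, and the derivation you sketch for it ("from (ii) applied to $A$ and to $O\setminus\overline{A'}$, together with (i)") cannot work: (i) gives no lower bound on $\liminf_p\mu_p(O)$ and no upper bound on $\limsup_p\mu_p(O\setminus\overline{A'})$, so the splitting $\mu_p(A)\ge\mu_p(O)-\mu_p(O\setminus\overline{A'})$ leads nowhere. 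Indeed, if the second hypothesis is read literally as a single $\limsup$ along the family, the proposition is \emph{false}: on $O=(0,1)$ with $\mu=\mathcal L^1$, let $\mu_{p_n}$ alternate between $2\mathcal L^1$ restricted to $(0,\tfrac12)$ and $2\mathcal L^1$ restricted to $(\tfrac12,1)$; both displayed conditions hold (since $\max\{2a,2b\}\ge a+b$), yet $\int_O x\,d\mu_{p_n}$ oscillates between $\tfrac14$ and $\tfrac34$. The statement must be read as in \cite{DGL}, i.e. the inequalities hold along \emph{every} sequence $p_n\to1$ (this is the point of the awkward phrase "for every sequence $p>1$"), which is equivalent to the $\liminf$ bound; you never make this move, and without it your proof has a genuine gap rather than a routine omission.

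Two further points in the identification step are stated with the inequalities in the wrong direction. For open $A\subseteq O$, weak-$\ast$ convergence gives $\lambda(A)\le\liminf_n\mu_{p_n}(A)$, which does \emph{not} yield the bound $\lambda(A)\ge\mu(A)$ you assert; to get that you need the compact-set inequality $\lambda(K)\ge\limsup_n\mu_{p_n}(K)$ applied to $\overline{A'}$ with $A'\subset\subset A$, followed by exhaustion of $A$, and then the reverse inequality on open sets from $\lambda(\overline O)=\lim_n\mu_{p_n}(O)\le\mu(O)$, $\lambda(\partial O)=0$ and outer regularity of the two finite Borel measures. Likewise, in your "cleaner packaging" you apply hypothesis (i) "on the open set $O\setminus\partial A$", but (i) is an assumption about $O$ only; the correct sandwich uses (ii) (in $\liminf$ form) on $O\setminus\overline A$ together with (i) on $O$. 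Finally, note that once the $\liminf$ reading is in place there is a more elementary proof that avoids compactness altogether: for $\varphi\in C(\overline O)$ with $\varphi\ge0$, Cavalieri's formula and Fatou's lemma applied to the open sets $\{\varphi>t\}\cap O$ give $\liminf_p\int_O\varphi\,d\mu_p\ge\int_O\varphi\,d\mu$, and applying this to $\max_{\overline O}\varphi-\varphi$ together with $\limsup_p\mu_p(O)\le\mu(O)$ gives the matching $\limsup$ inequality.
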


Let $O$ be a generic open subset of $\Rb^N$, we denote by
$\M(O)$ the space of all signed Radon measures in $O$ with bounded
total variation. By the Riesz Representation Theorem, $\M(O)$ can
be identified with the dual of the separable space $\C_0(O)$ of
continuous functions on the closure of $O$ vanishing on the boundary $\partial
O$. The $N$-dimensional Lebesgue measure in $\Rb^N$ is designated
as $\LL^N$ while $\HH^{N-1}$ denotes the $(N-1)$-dimensional
Hausdorff measure. If $\mu \in \M(O)$ and $\lambda \in \M(O)$ is a
nonnegative Radon measure, we denote by $\frac{d\mu}{d\lambda}$ the
Radon-Nikod\'ym derivative of $\mu$ with respect to $\lambda$. By a
generalization of the Besicovich Differentiation Theorem (see
\cite[Proposition 2.2]{ADM}), it can be proved that  there exists a
Borel set $E \subset O$ such that $\lambda(E)=0$ and
\begin{equation}\nonumber
\frac{d\mu}{d\lambda}(x)=\lim_{\rho \to 0^+} \frac{\mu(x+\rho \, C)}{\lambda(x+\rho \, C)}\ \text{ for all }x \in {\rm Supp }\,\ \mu \setminus E\end{equation}
and any open convex
set $C$ containing the origin. (Recall that the set $E$ is independent of $C$.)

\vskip5pt

We say that $u \in L^1(O;\Rb^d)$ is a function of bounded
variation, and we write $u \in BV(O;\Rb^d)$, if all its first
distributional derivatives $D_j u_i$ belong to $\M(O)$ for $1\leq i
\leq d$ and $1 \leq j \leq N$. We refer to \cite{AFP1} for a detailed
analysis of $BV$ functions. The matrix-valued measure whose entries
are $D_j u_i$ is denoted by $Du$ and $|Du|$ stands for its total
variation. By the Lebesgue Decomposition Theorem we can split $Du$
into the sum of two mutually singular measures $D^a u$ and $D^s u$
where $D^a u$ is the absolutely continuous part of $Du$ with respect
to the Lebesgue measure $\LL^N$, while $D^s u$ is the singular part
of $Du$ with respect to $\LL^N$. By $\nabla u$ we denote the
Radon-Nikod\'ym derivative of $D^au$ with respect to the Lebesgue
measure so that we can write
$$Du=\nabla u \LL^N + D^s u.$$

The set $S_u$ of points where $u$ does not have an approximate limit is called the approximated discontinuity set, while $J_u \subseteq S_u$ is  the so called jump set of $u$ defined as the set of points $x \in
O$ such that there exist $u^\pm(x) \in \Rb^d$ (with $u^+(x)\neq
u^-(x)$) and $\nu_u(x) \in \mathbb S^{N-1}$ satisfying
$$
\lim_{\varepsilon \to 0}\frac{1}{\varepsilon^N}\int_{\{y\in B_\e(x): (y-x)\cdot \nu_u(x)>0\}} |u(y)-u^+(x)|\,dy=0,
$$
and
$$
\lim_{\varepsilon \to 0}\frac{1}{\varepsilon^N}\int_{\{y\in B_\e(x): (y-x)\cdot \nu_u(x)<0\}} |u(y)-u^-(x)|\,dy=0.
$$

It is known that $J_u$
is a countably $\HH^{N-1}$-rectifiable Borel set.
By the Federer-Vol'pert Theorem (see Theorem 3.78 in \cite{AFP1}), ${\cal H}^{N-1}(S_u \setminus J_u)= 0$ for any $u \in BV(O;\mathbb R^d)$.
The measure $D^s
u$ can in turn be decomposed into the sum of a jump part and a
Cantor part defined by $D^j u:=D^s u \res\, J_u$ and $D^c u:= D^s u
\res\, (O \setminus S_u)$. We now recall the decomposition of $Du$:
$$Du= \nabla u  \LL^N + (u^+ -u^-)\otimes \nu_u {\cal H}^{N-1}\res\,
J_u + D^c u.$$
The three measures above are mutually singular. If ${\cal H}^{N-1}(B) < + \infty$, then $|D^c u|(B)=0$ and there exists a Borel set $E$ such that
$$
{\cal L}^N(E)= 0, \; |D^c u|(X)=|D^c u|(X \cap E)
$$
for all Borel sets $X \subseteq O$.

If $O$ is an open set with Lipschitz boundary $\partial O$ and $u \in BV(O)$, we denote by $u_o$ the null extension of $u$ to $\mathbb R^N$ defined by 
\begin{equation}\nonumber
\left\{
\begin{array}{ll}
u(x) & \hbox{ if } x \in O,
\\
0 &\hbox{ if } x \in \mathbb R^N \setminus O,
\end{array}
\right.
\end{equation}
for ${\cal L}^N$ a.e. $x \in \mathbb R^N$. It turns out that $u_o\in BV(\mathbb R^N)$, and we define the trace $\gamma_O(u)$ of $u$ on $\partial O$ as 
\begin{equation}\nonumber
\gamma_O(u)=(u_o)^+ -(u_o)^-.
\end{equation} 
It results that for ${\cal H}^{N-1}$-a.e. $x \in \partial O$, the vector $\nu_{u_o}(x)$ agrees with the exterior (interior) normal ${\bf n}(x)$ to $\partial O$ at $x$, moreover $u_o^+(x)=0$ or $u_o^-(x)=0$  and $\gamma_O(u)(x)=u^+_o$ or $\gamma_O(u)(x)= u^-_o$.
We also recall that (see \cite{Z})
\begin{equation}\nonumber
\displaystyle{\lim_{r^N\to 0} \frac{1}{r^N} \int_{O\cap B_r(x_0)} |u(x)-\gamma_O(u)(x_0)|^{\frac{N}{N-1}}dx=0 \hbox{  for }{\cal H}^{N-1}-\hbox{a.e.} x_0 \in \partial O.}
\end{equation}

Let $O\subset \mathbb R^N$ be a bounded open set with Lipschitz boudary, $p>1$, the fractional Sobolev space $W^{1-\frac{1}{p},p}(\partial O)$ may be defined as follows.
\begin{equation}\nonumber
\displaystyle{W^{1-\frac{1}{p},p}(\partial O)=\left\{ \varphi \in L^p(\partial O):\int_{\partial O}\int_{\partial O}\frac{|\varphi(x)-\varphi(y)|^p}{|x-y|^{N+p-1}}d {\cal H}^{N-1}d {\cal H}^{N-1}< + \infty \right\}},
\end{equation}
it is endowed with the norm
\begin{equation}\nonumber
\displaystyle{\|\varphi\|_{W^{1-\frac{1}{p},p}(\partial O)}=\left\{\|\varphi\|^p_{L^p(\partial O)}+\int_{\partial O}\int_{\partial O}\frac{|\varphi(x)-\varphi(y)|^p}{|x-y|^{N+p-1}}d {\cal H}^{N-1}d {\cal H}^{N-1} \right\}^\frac{1}{p}}.
\end{equation}

It is well known that $W^{1-\frac{1}{p},p}(\partial O)$ is the trace space of $W^{1,p}(O)$, (i.e. $W^{1-\frac{1}{p},p}(\partial O)= \gamma_O(W^{1,p}(O))$). For $p=1$, one may substitute $W^{1-\frac{1}{p},p}(\partial O)$ by $L^1(\partial O)$. 
Since obviously 
$$
\gamma_O(u)(x)=u(x)
$$
for every $u \in W^{1,p}(O)\cap C(\overline{O})$ and for ${\cal H}^{N-1}$-a.e. $x \in \partial O$, then, with an abuse of notations, in the sequel, we will denote $\gamma_O(u)$ by $u$.
It verifies
$$
\displaystyle{\int_O u {\rm div}\phi dx=-\int_O \nabla u \cdot \phi dx + \int_{\partial O}\phi \gamma_O(u)\cdot {\bf n}_O d {\cal H}^{N-1},}
$$
for every $u \in W^{1,p}(O), \phi \in C^1_c(\mathbb R^N)^N.$

The following inequalities hold
\begin{equation}\label{133Z}
\displaystyle{\|\gamma_O(u)\|_{W^{1-\frac{1}{p},p}(\partial O)} \leq C_0 \|u\|_{W^{1,p}(O)} \hbox{ for every }u \in W^{1,p}(O),}
\end{equation}
and, conversely, for every $\varphi \in W^{1-\frac{1}{p},p}(\partial O)$ there exists $u \in W^{1,p}(O)$ such that $\gamma_O(u)=\varphi$ and
\begin{equation}\label{134Z}
\displaystyle{\|u\|_{W^{1,p}(O)}\leq C_1 \|\varphi\|_{W^{1-\frac{1}{p},p}(\partial O)},}
\end{equation} 
for suitable constants $C_0, C_1 \geq 0$.

The following result (cf. \cite[Proposition 1.1]{Za}) allows us to extend the previous considerations and inequality \eqref{133Z} to $\mathbb R^N \setminus \overline{O}$, provided $O$ is bounded.

\begin{Proposition}\label{propZa}
Let $p>1$, let $O$ be a bounded open set with Lipschitz boundary, then there exists $C'_2>0$ such that for every $\varphi \in W^{1-\frac{1}{p}, p}(\partial O)$ there exists $u \in W^{1,p}(\mathbb R^N \setminus \overline{O})$ such that $\gamma_{\mathbb R^N \setminus \overline{O}}(u)=\varphi$ and 
$$
\displaystyle{\|u\|_{W^{1,p}(\mathbb R^N \setminus \overline{O})} \leq C'_2 \|\varphi\|_{W^{1-\frac{1}{p}, p}(\partial O)}}.
$$
\end{Proposition}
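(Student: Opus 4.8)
The idea is to reduce the exterior extension to the interior trace–extension estimate \eqref{134Z}, applied to a bounded Lipschitz ``annular'' domain, and then to extend by zero across an auxiliary sphere. Fix $R>0$ so large that $\overline{O}\subset B_R:=B_R(0)$, and set $\Omega':=B_R\setminus\overline{O}$. Since $O$ has Lipschitz boundary and $\overline{O}\subset B_R$, the set $\Omega'$ is a bounded open set with Lipschitz boundary $\partial\Omega'=\partial O\cup\partial B_R$, where $\partial O$ and $\partial B_R$ are disjoint compact sets separated by a positive distance $d:=\mathrm{dist}(\partial O,\partial B_R)>0$.

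Given $\varphi\in W^{1-\frac{1}{p},p}(\partial O)$, define $\tilde\varphi$ on $\partial\Omega'$ by $\tilde\varphi=\varphi$ on $\partial O$ and $\tilde\varphi=0$ on $\partial B_R$. I claim $\tilde\varphi\in W^{1-\frac{1}{p},p}(\partial\Omega')$ with a bound $\|\tilde\varphi\|_{W^{1-\frac1p,p}(\partial\Omega')}\le C\,\|\varphi\|_{W^{1-\frac1p,p}(\partial O)}$: the $L^p$ part is exactly $\|\varphi\|_{L^p(\partial O)}$, and in the Gagliardo double integral over $\partial\Omega'\times\partial\Omega'$ the contribution of pairs both in $\partial O$ is the seminorm of $\varphi$, the contribution of pairs both in $\partial B_R$ vanishes, and the mixed contribution is at most $2\,d^{-(N+p-1)}\,\mathcal H^{N-1}(\partial B_R)\,\|\varphi\|_{L^p(\partial O)}^p$ because $|x-y|\ge d$ there. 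Applying \eqref{134Z} on $\Omega'$ yields $w\in W^{1,p}(\Omega')$ with $\gamma_{\Omega'}(w)=\tilde\varphi$ and $\|w\|_{W^{1,p}(\Omega')}\le C_1\|\tilde\varphi\|_{W^{1-\frac1p,p}(\partial\Omega')}\le C\,\|\varphi\|_{W^{1-\frac1p,p}(\partial O)}$.

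Now define $u:=w$ on $\Omega'$ and $u:=0$ on $\mathbb R^N\setminus B_R$, so that $u\in L^p(\mathbb R^N\setminus\overline{O})$ and $u$ vanishes outside $\overline{B_R}$. To see $u\in W^{1,p}(\mathbb R^N\setminus\overline{O})$, take $\phi\in C^1_c(\mathbb R^N\setminus\overline{O})^N$: by the integration-by-parts formula recalled above, applied on $\Omega'$,
\[
\int_{\mathbb R^N\setminus\overline{O}} u\,\mathrm{div}\,\phi\,dx=\int_{\Omega'} w\,\mathrm{div}\,\phi\,dx=-\int_{\Omega'}\nabla w\cdot\phi\,dx+\int_{\partial\Omega'}\phi\cdot{\bf n}_{\Omega'}\,\gamma_{\Omega'}(w)\,d\mathcal H^{N-1},
\]
and the boundary term vanishes, on $\partial O$ because $\mathrm{supp}\,\phi$ is disjoint from $\overline{O}$ and on $\partial B_R$ because $\gamma_{\Omega'}(w)=0$ there. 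Hence $\nabla u=\chi_{\Omega'}\nabla w\in L^p(\mathbb R^N\setminus\overline{O})$ and $\|u\|_{W^{1,p}(\mathbb R^N\setminus\overline{O})}=\|w\|_{W^{1,p}(\Omega')}\le C_2'\|\varphi\|_{W^{1-\frac1p,p}(\partial O)}$. Finally, since $u$ agrees with $w$ in a neighbourhood of $\partial O$ inside $\mathbb R^N\setminus\overline{O}$, and the $\Omega'$-side of $\partial O$ coincides with the $(\mathbb R^N\setminus\overline{O})$-side, the trace identification in the definition of $\gamma_{\mathbb R^N\setminus\overline{O}}$ gives $\gamma_{\mathbb R^N\setminus\overline{O}}(u)=\gamma_{\Omega'}(w)|_{\partial O}=\varphi$.

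The only delicate point is the zero-extension step across the artificial sphere $\partial B_R$: one must know that a $W^{1,p}$ function with null trace on a Lipschitz portion of the boundary extends by zero across that portion without creating a concentrated (measure) part in the gradient. This is classical, and here it is handled cleanly by the integration-by-parts formula already stated in the text; everything else is the routine bookkeeping of fractional norms sketched above. The constant $C_2'$ depends only on $p$, $N$, $O$ and the choice of $R$, hence can be fixed once and for all.
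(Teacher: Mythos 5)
Your argument is correct, and there is nothing in the paper to compare it against line by line: the paper does not prove Proposition \ref{propZa} at all, it simply quotes it from \cite[Proposition 1.1]{Za}. Your proof is a legitimate self-contained substitute. The route you take — pass to the annular Lipschitz domain $\Omega'=B_R\setminus\overline{O}$, extend $\varphi$ by zero to $\partial B_R$ (the mixed Gagliardo term is indeed harmless because $|x-y|\ge d>0$ there), lift $\tilde\varphi$ into $W^{1,p}(\Omega')$ via \eqref{134Z}, and then extend by zero across $\partial B_R$, with the integration-by-parts formula certifying that no distributional contribution appears on the sphere since the trace vanishes there — is sound, and the final trace identification on $\partial O$ is legitimate by locality of the trace (near $\partial O$ the sets $\Omega'$ and $\mathbb R^N\setminus\overline O$ coincide). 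The only points you assert rather than prove are standard: that $\Omega'$ is a bounded Lipschitz domain with $\partial\Omega'=\partial O\cup\partial B_R$, and the locality of the trace operator; both are routine for Lipschitz $O$ with $\overline O\subset B_R$. A slightly shorter alternative, closer in spirit to what one finds in the cited literature, is to take an interior lifting $v\in W^{1,p}(O)$ of $\varphi$ via \eqref{134Z}, apply a total extension operator $E:W^{1,p}(O)\to W^{1,p}(\mathbb R^N)$ for the Lipschitz domain $O$, and restrict $Ev$ to $\mathbb R^N\setminus\overline O$: since $Ev\in W^{1,p}(\mathbb R^N)$ has no jump across $\partial O$, its exterior trace is again $\varphi$, and the norm bound follows from the boundedness of $E$; multiplying by a cutoff supported in $B_R$ gives, if desired, an extension vanishing at infinity. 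Your construction buys independence from the extension-operator theorem at the price of the zero-extension bookkeeping, which you handle correctly.
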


For every $p \in [1,+\infty[$, let $I$ be a bounded open set in $\mathbb R^N$ with Lipschitz boundary such that  $\Gamma:= \partial O \cap I \not= \emptyset$ and suppose that ${\cal H}^{N-1}(\overline{\Gamma}\setminus \Gamma)= 0$, we denote by $W_{0,\Gamma}^{1,p}(O)$ the space
$\{u \in W^{1,p}(O): u = 0 \, {\cal H}^{N-1}-\hbox{a. e. on $\Gamma$}\}$, $W^{1,p}_{0, \partial O}(O)= W^{1,p}_0(O)$. 
In the sequel, for every $u_1 \in W^{1,p}_{\rm loc}(\mathbb R^N)$ we denote $u_1+ W^{1,p}_{0, \Gamma}(O)$ by $W^{1,p}_{u_1, \Gamma}(O)$, and $u_1 + W^{1,p}_{0}(O)$ by $W^{1,p}_{u_1}(O)$.


In the sequel with an abuse of notation, we will identify (the restriction of) a function $u$ with its trace on part of $\partial O$, $\gamma_O(u)$.  

We end this subsection by recalling a result due to Ekeland and Temam that will be exploited in the sequel, we refer to the version mentioned in \cite[Theorem 2]{D3}.

\begin{Theorem}\label{Theorem ET} Suppose that $X$ and $Y$ are Banach spaces, that $\Lambda$ is a linear and continuous operator which sends $X$ into $Y$, that $F$ and $G$ are convex functions on $X$ and $Y$, respectively. We denote $F^\ast$ and $G^\ast$  their Fenchel conjugate, defined, respectively, on $X^\ast$ and $Y^\ast=Y$, by $\Lambda^\ast$  the adjoint operator of $\Lambda$. Then
$$
\inf_{u \in X}\left\{F(u)+ G(\Lambda u)\right\} \geq \sup_{p^\ast \in Y}\left\{-F^\ast(\Lambda^\ast p^\ast)- G^\ast(-p^\ast)\right\}.
$$ 

Suppose that there exists $u_0 \in X$, such that $F(u_0)< \infty$, and $G$ is continuous on $\Lambda u_0$. Then ,
$$
\inf_{u \in X}\left\{F(u)+ G(\Lambda u)\right\} =\sup_{p^\ast \in Y}\left\{-F^\ast(\Lambda^\ast p^\ast)- G^\ast(-p^\ast)\right\},
$$
and the dual problem on the right-hand side of the above possesses at least one solution.
\end{Theorem}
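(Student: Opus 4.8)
This is the classical Fenchel--Rockafellar duality theorem, and the plan is the standard two-stage argument: first the elementary ``weak duality'' inequality, and then the absence of a duality gap together with attainment of the supremum, which rests on a subdifferentiability property of the associated value function.

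For weak duality, fix any $u \in X$ and $p^* \in Y$. The definition of the Fenchel conjugates, together with the identity $\langle \Lambda^* p^*, u\rangle = \langle p^*, \Lambda u\rangle$, gives at once $F^*(\Lambda^* p^*) \ge \langle p^*, \Lambda u\rangle - F(u)$ and $G^*(-p^*) \ge -\langle p^*, \Lambda u\rangle - G(\Lambda u)$; adding these yields $F(u) + G(\Lambda u) \ge -F^*(\Lambda^* p^*) - G^*(-p^*)$. Taking the infimum over $u$ and the supremum over $p^*$ proves the first inequality of the statement. In particular, if $m := \inf_{u\in X}\{F(u)+G(\Lambda u)\} = -\infty$ the two values coincide trivially, so for the second part one may assume $m$ finite; note that $m \le F(u_0)+G(\Lambda u_0) < +\infty$ by the qualification hypothesis.

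For the equality and the attainment I would introduce the perturbation (value) function $h(z) := \inf_{u\in X}\{F(u)+G(\Lambda u + z)\}$ defined for $z \in Y$, so that $h(0) = m$. Since $(u,z) \mapsto F(u) + G(\Lambda u + z)$ is jointly convex on $X \times Y$ and partial infima of convex functions are convex, $h$ is convex. Evaluating at the admissible point $u_0$, one has $h(z) \le F(u_0) + G(\Lambda u_0 + z)$ for all $z$, and since $G$ is continuous at $\Lambda u_0$ the right-hand side is bounded above for $z$ in a neighbourhood of $0$; hence $h$ is convex, finite at $0$, and locally bounded above there, so it is continuous at $0$ and $\partial h(0) \neq \emptyset$. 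Choosing $-p^* \in \partial h(0)$ gives $h(z) \ge m - \langle p^*, z\rangle$ for every $z \in Y$. Unravelling the definition of $h$, substituting $w = \Lambda u + z$, and then taking the supremum over $u \in X$ and over $w \in Y$ separately, this inequality becomes $F^*(\Lambda^* p^*) + G^*(-p^*) \le -m$; combined with weak duality this forces equality throughout, and exhibits this $p^*$ as a solution of the dual problem.

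The one genuinely non-trivial ingredient is the implication ``convex and bounded above near an interior point of the domain $\Rightarrow$ continuous there $\Rightarrow$ subdifferentiable there'' in a Banach space; this is precisely where the continuity hypothesis on $G$ at $\Lambda u_0$ is used, and it is the step I expect to require the most care (including checking that $h$ stays proper near $0$, which is automatic once $m$ is finite and $h$ is locally bounded above). Everything else is routine manipulation of the Legendre--Fenchel transform and of the adjoint operator $\Lambda^*$.
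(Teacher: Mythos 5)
Your proposal is correct, and in fact the paper itself offers no proof of this statement: it is recalled verbatim from Ekeland--Temam (in the version quoted in \cite[Theorem 2]{D3}) and used as a black box. Your two-stage argument --- weak duality directly from the definition of the Fenchel conjugates and the identity $\langle \Lambda^*p^*,u\rangle=\langle p^*,\Lambda u\rangle$, followed by the perturbation function $h(z)=\inf_{u\in X}\{F(u)+G(\Lambda u+z)\}$, its local boundedness above near $0$ coming from the continuity of $G$ at $\Lambda u_0$, hence continuity and nonemptiness of $\partial h(0)$, and the unravelling of a subgradient $-p^*\in\partial h(0)$ into $F^*(\Lambda^*p^*)+G^*(-p^*)\le -m$ --- is exactly the classical proof of this duality theorem, so there is nothing to compare against within the paper. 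Two cosmetic remarks: the subgradient you produce lives in $Y^*$, which is the correct reading of the paper's (abusive) ``$Y^*=Y$'' and ``$p^*\in Y$''; and one should tacitly assume $F$ and $G$ proper (allowed to take the value $+\infty$ but not identically $+\infty$), which your qualification $F(u_0)<\infty$, $G$ finite and continuous at $\Lambda u_0$ already guarantees where it matters.
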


\section{Asymptotics in terms of $\Gamma$-convergence}\label{results}

In order to study the asymptotics for $p \to 1$ of problems ${\cal P}_{p,\e}$ and ${\cal P}_{p,0}$ in \eqref{p-e-problem} and \eqref{p-0-problem} respectively, we will exploit previous results and prove more general ones for generic open sets $O \subset \mathbb R^N$. Finally we will apply these lemmata to the specific open sets $\Omega \subset \mathbb R^3$ and $\omega \in \mathbb R^2$ involved in problems ${\cal P}_{p, 0}$ and ${\cal P}_{p,\e}$. We assume from now on that $\omega$ is a bounded open set in $\mathbb R^2$, which is piecewise $C^1$.
We conjecture that it is possible to assume $\omega$ with Lipschitz boundary, but, since our aim consists of providing $\Gamma$-convergence results in dimension reduction for $-\Delta_1$, connecting our results with `Least Gradient' theory, we did not focus on the regularity assumptions
for the boundary $\partial \omega$.

We start by recalling the following result that can be found in \cite{D} and \cite{G}.

\begin{Proposition}\label{prop2.3Aimeta}
Let $O \subset \mathbb R^N$ be some bounded open set, which is piecewise $C^1$. Let $u_1 \in L^1(\partial O)$. Suppose that $u_p \in W^{\frac{p-1}{p},p}(\partial O)$ converges in $L^1(\partial O)$ towards $u_1$. Then for every $u \in BV(O)$, there exists $U_p \in W^{1,p}(O)$, $U_p=u_p$ on $\partial O$, such that
$$
\begin{array}{ll}
\displaystyle{\lim_{p\to 1}\int_O |\nabla U_p|^p dx= |Du |(O)+ \int_O |u-u_1|d{\cal H}^{N-1},}\\
\\
\displaystyle{\lim_{p\to 1}\int_O |U_p-u|^{1^\ast} dx =0,}
\end{array}
$$
where $1^\ast= \frac{N}{N-1}$.
\end{Proposition}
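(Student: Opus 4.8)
The plan is to construct the recovery sequence $U_p$ by a two-step density and diagonalization argument, following the classical strategy for passing from $BV$ to $W^{1,p}$ with $p\to 1$. First I would reduce to the case where $u$ is smooth in the interior: by the strict density of $C^\infty(O)\cap BV(O)$ in $BV(O)$ with respect to strict convergence (area-strict, so that $|Du_k|(O)\to|Du|(O)$ together with $L^1$ convergence), and using that traces are continuous with respect to strict convergence, it suffices to prove the statement for $u\in W^{1,1}(O)\cap C^\infty(O)$ and then diagonalize. For such a $u$, the term $|Du|(O)$ becomes $\int_O|\nabla u|\,dx$, which is the natural target of the $L^p$ norms as $p\to 1$.

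Next, for smooth $u$, I would handle the boundary discrepancy separately. Since $u_p\to u_1$ in $L^1(\partial O)$ and $u_1$ need not equal the trace of $u$, I would build $U_p$ as $u$ plus a boundary corrector that is concentrated in a thin collar $\{x\in O:\dist(x,\partial O)<\delta\}$ and whose trace on $\partial O$ equals $u_p-\gamma_O(u)$. Concretely, using the piecewise-$C^1$ structure of $\partial O$ and the extension/trace inequalities \eqref{133Z}–\eqref{134Z}, one can find for each fixed $\delta$ an extension whose gradient, integrated in $L^p$ over the collar, is controlled; the geometry of the collar of width $\delta$ lets one arrange that the $L^p$ mass of the tangential part stays bounded while the normal part produces, in the limit $p\to1$ and then $\delta\to0$, exactly $\int_{\partial O}|u_1-\gamma_O(u)|\,d\HH^{N-1}=\int_O|u-u_1|\,d\HH^{N-1}$ (with the abuse of notation identifying $u$ on $\partial O$ with its trace). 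Outside the collar $U_p$ just equals (a mollification of) $u$, contributing $\int_O|\nabla u|\,dx$ in the limit. The second conclusion, $\int_O|U_p-u|^{1^\ast}\,dx\to0$, follows because $U_p-u$ is supported in the collar of width $\delta\to0$ and is bounded in $W^{1,p}$, hence bounded in $L^{1^\ast}$ by Sobolev embedding, so its $L^{1^\ast}$ mass on a vanishing-volume set tends to $0$; a diagonal choice $\delta=\delta(p)\to0$ slowly reconciles the two limits.

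The main obstacle, I expect, is the boundary corrector step: one must simultaneously (i) realize the prescribed trace $u_p-\gamma_O(u)$ in $W^{1,p}$, (ii) keep the $L^p$ energy of the corrector asymptotically no larger than $\int_{\partial O}|u_1-\gamma_O(u)|\,d\HH^{N-1}$ as $p\to1$, and (iii) do this uniformly enough in $p$ to permit diagonalization. This is where the piecewise-$C^1$ (or positive-mean-curvature) hypothesis on $\partial O$ is used — it guarantees a well-behaved tubular neighbourhood and a bi-Lipschitz flattening of the boundary, so that the corrector can be taken of the form $\psi(\dist(x,\partial O)/\delta)\,w_p(x)$ with $w_p$ an extension of $u_p-\gamma_O(u)$ and $\psi$ a cutoff; the key estimate is that $\frac1\delta\int_0^\delta\!\cdots\,\psi'\cdots$ produces the surface integral in the limit while the tangential derivatives contribute a vanishing amount. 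Since the inequality "$\limsup$" on the energy is what is needed for a recovery sequence and the matching lower bound is not claimed here, I would only need the one-sided estimate, which simplifies the collar construction. Finally I would specialize $O=\Omega\subset\mathbb R^3$ and $O=\omega\subset\mathbb R^2$ to obtain the consequences used later in the paper.
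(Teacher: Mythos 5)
First, a point of orientation: the paper does not prove this proposition at all --- it is recalled as a known result from \cite{D} and \cite{G} (and the related \cite[Proposition 2]{D2} is used later as a black box in Proposition \ref{prop3.2Aimeta}). So your proposal has to stand on its own, and while its overall architecture (smooth interior approximation of $u$, a boundary-layer corrector realizing the trace discrepancy, diagonalization in $p$, $\delta$ and the smoothing parameter, plus the observation that only the upper bound is delicate since the lower bound follows from standard relaxation/lower semicontinuity) is the natural one, the step you yourself flag as ``the main obstacle'' is precisely where the argument has a genuine gap, not just a technical one. Your corrector is $\psi(\dist(x,\partial O)/\delta)\,w_p$, with $w_p$ a $W^{1,p}(O)$ extension of $u_p-\gamma_O(u)$ obtained from \eqref{133Z}--\eqref{134Z}. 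Any estimate coming from that construction is in terms of $\delta^{1-p}\int_{\partial O}|u_p-\gamma_O(u)|^p\,d{\cal H}^{N-1}$ (for the normal/cutoff term) or of $C_1(p)^p\|u_p-\gamma_O(u)\|^p_{W^{1-\frac1p,p}(\partial O)}$ (for $\nabla w_p$), and neither quantity is controlled by, nor need it converge to, the target $\int_{\partial O}|u_1-\gamma_O(u)|\,d{\cal H}^{N-1}$: the hypothesis only gives $u_p\to u_1$ in $L^1(\partial O)$, the $L^p(\partial O)$ and fractional norms of $u_p$ may blow up arbitrarily as $p\to1$, the constants in \eqref{133Z}--\eqref{134Z} depend on $p$, and the factor $\delta^{1-p}\ge 1$ punishes, rather than helps, thin layers when $p>1$. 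There is no mechanism in your sketch converting the $p$-th power of a wildly $p$-dependent boundary datum into its $L^1$ norm; asserting that ``$\frac1\delta\int_0^\delta\cdots\psi'\cdots$ produces the surface integral in the limit'' is exactly the nontrivial content of the proposition. Indeed a capacity-type lower bound near $\partial O$ shows that for a spike datum $u_p=M_p\chi_{A_{r_p}}$ on a boundary patch, every admissible $U_p$ has $\int_O|\nabla U_p|^p\gtrsim M_p^p\,r_p^{N-p}$ with a constant uniform as $p\to1$, so the claimed limit cannot be reached by any soft collar argument without exploiting extra structure of the datum (truncations handling the merely-$L^1$ part, a Gagliardo-type $W^{1,1}$ lifting with energy $(1+\epsilon)\|\cdot\|_{L^1(\partial O)}$ adapted to exponent $p$, and a layer width tuned so that $(p-1)\log(1/\delta_p)\to0$); this is what the cited proofs of Demengel/Giusti supply and what is missing here. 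Note also that in the only way the paper uses the proposition, $u_p\equiv u_0$ is a fixed element of $W^{1-\frac1{\bar p},\bar p}(\partial\omega)$, which removes precisely this degeneracy.

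Two smaller points. The final $L^{1^\ast}$ claim is also not justified as written: being bounded in $L^{1^\ast}$ and supported in a collar of vanishing volume does not force the $L^{1^\ast}$ norm to vanish (a concentrating bump keeps its norm); one needs either a bound in a strictly higher Lebesgue exponent with a quantitative rate in $\delta(p)$ --- and the $W^{1,p}$ bound on the corrector is exactly what is not uniformly available --- or equi-integrability in $L^{1^\ast}$ together with a.e.\ convergence. Finally, the boundary term in the statement should be read as $\int_{\partial O}|u-u_1|\,d{\cal H}^{N-1}$ (the ``$\int_O$'' is a typo), as you correctly interpreted.
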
 

We restate the above result in terms of $\Gamma$-convergence with respect to $L^1$-strong convergence. To this end, assume that 
${\bar p}>1$ and let $u_0 \in X(\partial \omega)=W^{\frac{{\bar p}-1}{\bar p}, {\bar p}}(\partial \omega)$. By virtue of Proposition \ref{propZa} and \eqref{134Z}, $u_0$ can be seen as a function in $W^{1,\overline{p}}_{\rm loc}(\mathbb R^2)$ and since it is independent on $x_3$, it can be also considered as a function in $W^{1,{\overline p}}_{\rm loc}(\mathbb R^3)$.
  
Let $F_{p,0}:BV(\omega)\to \mathbb R$ be the functional defined as
\begin{equation}\label{19AGZAIMETA}
F_{p,0}(u):=\left\{
\begin{array}{ll}
\displaystyle{\left(\int_\omega |\nabla u|^p dx\right)^{\frac{1}{p}}} &\hbox{ if }u \in W^{1,p}_{u_0}(\omega),\\
\\
+\infty &\hbox{otherwise}.
\end{array}
\right.
\end{equation}
 Let $F_{1,0}:BV(\omega )\to \mathbb R$ be defined as
\begin{equation}\label{20AGZAIMETA}
F_{1,0}(u):=|D u|(\overline{\omega})=|D u|(\omega)+\int_{\partial \omega} |u-u_0|d{\cal H}^1.
\end{equation}

We can prove the following theorem 
\begin{Theorem}\label{Gammap0to10}
Let $\{F_{p,0}\}_p$ be the family of functionals introduced in \eqref{19AGZAIMETA}, then $\{F_{p,0}\}_p$ $\Gamma$-converges with respect to the $L^1(\omega)$ strong topology to $F_{1,0}$.
\end{Theorem}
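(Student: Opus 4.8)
The plan is to establish the two standard $\Gamma$-convergence inequalities (liminf and limsup) with respect to the $L^1(\omega)$ strong topology, using Proposition \ref{prop2.3Aimeta} for the recovery sequence and a lower semicontinuity argument for the liminf bound. Throughout I will use that $F_{1,0}(u)=|Du|(\omega)+\int_{\partial\omega}|u-u_0|\,d\mathcal{H}^1$ is exactly the relaxation with respect to $L^1$-strong convergence, in $BV(\omega)$, of the Dirichlet-type functional $u\mapsto\int_\omega|\nabla u|\,dx$ restricted to $W^{1,1}_{u_0}(\omega)$ (this is the classical result of \cite{ADM}, \cite{FM}, also recalled in Proposition \ref{gammae}); equivalently $F_{1,0}$ is $L^1$-lower semicontinuous on $BV(\omega)$ and coincides with $\overline{I_{1,0}}$ in the $2D$ setting.

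\emph{Liminf inequality.} Let $u_p\to u$ in $L^1(\omega)$ with $u\in BV(\omega)$; we must show $F_{1,0}(u)\le\liminf_p F_{p,0}(u_p)$. We may assume the liminf is finite and, passing to a subsequence, that it is a limit and that each $u_p\in W^{1,p}_{u_0}(\omega)$ (otherwise $F_{p,0}(u_p)=+\infty$). By Jensen's inequality (since $|\omega|$ is finite we may normalize, or simply use $\|\nabla u_p\|_{L^1}\le |\omega|^{1-1/p}\|\nabla u_p\|_{L^p}$),
\[
\int_\omega|\nabla u_p|\,dx\le|\omega|^{1-\frac1p}\left(\int_\omega|\nabla u_p|^p\,dx\right)^{\frac1p}=|\omega|^{1-\frac1p}F_{p,0}(u_p),
\]
so $\limsup_p\int_\omega|\nabla u_p|\,dx\le\liminf_p F_{p,0}(u_p)<\infty$. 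Now extend each $u_p$ by $u_0$ outside $\omega$ (using that $u_0\in W^{1,\bar p}_{\mathrm{loc}}(\mathbb R^2)$, hence in $W^{1,1}_{\mathrm{loc}}$, via Proposition \ref{propZa}) to get $\widetilde{u_p}\in W^{1,1}_{\mathrm{loc}}(\mathbb R^2)$ with $\widetilde{u_p}\to\widetilde u$ in $L^1_{\mathrm{loc}}$, where $\widetilde u$ equals $u$ on $\omega$ and $u_0$ outside. Since the trace of $u_p$ agrees with that of $u_0$ on $\partial\omega$, there is no jump contribution of $\widetilde{u_p}$ on $\partial\omega$, so $|D\widetilde{u_p}|(\overline\omega)=\int_\omega|\nabla u_p|\,dx$. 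By $L^1$-lower semicontinuity of total variation on the fixed open neighborhood of $\overline\omega$, $|D\widetilde u|(\overline\omega)\le\liminf_p|D\widetilde u_p|(\overline\omega)$, and $|D\widetilde u|(\overline\omega)=|Du|(\omega)+\int_{\partial\omega}|u-u_0|\,d\mathcal{H}^1=F_{1,0}(u)$, which gives the claim.

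\emph{Limsup inequality (recovery sequence).} Given $u\in BV(\omega)$, apply Proposition \ref{prop2.3Aimeta} with $O=\omega$, $N=2$, $u_1=u_0$ and the constant-in-$p$ sequence $u_p\equiv u_0\in W^{\frac{p-1}{p},p}(\partial\omega)$ (which converges to $u_0$ in $L^1(\partial\omega)$, using $u_0\in W^{\frac{\bar p-1}{\bar p},\bar p}(\partial\omega)\subset L^1(\partial\omega)$). This yields $U_p\in W^{1,p}(\omega)$ with $U_p=u_0$ on $\partial\omega$, hence $U_p\in W^{1,p}_{u_0}(\omega)$, such that $U_p\to u$ in $L^{1^\ast}(\omega)$ — in particular in $L^1(\omega)$ — and
\[
\lim_{p\to1}\int_\omega|\nabla U_p|^p\,dx=|Du|(\omega)+\int_{\partial\omega}|u-u_0|\,d\mathcal{H}^1=F_{1,0}(u).
\]
Since $F_{p,0}(U_p)=\left(\int_\omega|\nabla U_p|^p\,dx\right)^{1/p}$ and the bracketed quantity converges to the finite limit $F_{1,0}(u)$ while $1/p\to1$, we get $\lim_{p\to1}F_{p,0}(U_p)=F_{1,0}(u)$. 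Thus $\{U_p\}$ is a recovery sequence, and combined with the liminf inequality and Proposition \ref{Prop2.3BF} (reducing $\Gamma$-convergence as $p\to1$ to convergence along subsequences) this proves $\Gamma$-$\lim_p F_{p,0}=F_{1,0}$.

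\emph{Main obstacle.} The only genuinely delicate point is the liminf inequality, specifically handling the boundary term: one must be careful that the $L^1$-limit $u$ of the $u_p$'s need not attain the datum $u_0$ on $\partial\omega$ in the trace sense, and the "defect" $\int_{\partial\omega}|u-u_0|\,d\mathcal{H}^1$ must be recovered. The clean way, as sketched, is to pass to the whole plane: extend by $u_0$, so that all competitors live in $W^{1,1}_{\mathrm{loc}}(\mathbb R^2)$ with common boundary values on $\partial\omega$, and then a single application of lower semicontinuity of the total variation on an open set containing $\overline\omega$ automatically produces the boundary penalization. The piecewise-$C^1$ regularity of $\partial\omega$ (equivalently, the availability of Proposition \ref{propZa} and of a good trace theory) is what makes this extension and the identity $|D\widetilde u|(\overline\omega)=|Du|(\omega)+\int_{\partial\omega}|u-u_0|\,d\mathcal{H}^1$ legitimate.
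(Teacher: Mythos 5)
Your proposal is correct and follows essentially the same route as the paper: the limsup bound by invoking Proposition \ref{prop2.3Aimeta} with the fixed datum $u_0$ (the $1/p$-power causing no trouble since the exponent tends to $1$), and the liminf bound via H\"older plus extension of $u$ and $u_p$ by the Sobolev extension of $u_0$ outside $\omega$, recovering the boundary term through lower semicontinuity of the total variation on open sets $\omega'\supset\supset\omega$ and then shrinking $\omega'$ to $\omega$. The only cosmetic difference is that you use $L^1$-lower semicontinuity of the total variation directly, whereas the paper first extracts a weakly-$\ast$ convergent subsequence in $BV$; both are equivalent here.
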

\begin{proof}[Proof]
The lower bound is trivially obtained if $\{u_p\}_p$ is such that $\lim_{p \to 1}F_{p,0}(u_p)=+ \infty$.
Let $\{u_p\}_p$ be converging strongly to $u \in BV(\omega)$ in $L^1(\omega)$ and assume also that it is an equibounded energy sequence, namely there exists $C>0$ such that 
$$
\displaystyle{F_{p,0}(u_p)=\left(\int_\omega |\nabla u_p|^p dx\right)^{\frac{1}{p}}\leq C.}
$$ 
By Hoelder inequality, and the fact that $u_p \in W^{1,p}_{u_0}(\omega)$ it results that
$$
\displaystyle{|Du_p|(\overline{\omega})\leq \left(\int_\omega |\nabla u_p|^p dx\right)^\frac{1}{p}|\omega|^{1-\frac{1}{p}}\leq C' \hbox{ for every } 1 \leq p\leq {\bar p}.}
$$

Observe that for every $v \in BV(\omega)$ by virtue of Poincar\'e inequality, any energy equibounded sequence $\{u_p\}_p$ admits a further subsequence, converging weakly $\ast$ in $BV(\omega)$ to $u \in BV(\omega)$. Since $u_0$ is in $W^{\frac{{\bar p}-1}{{\bar p}},{\bar p}}(\partial \omega)$ for a certain ${\bar p}>1$, by virtue of Proposition \ref{propZa}, we can define a function $\widetilde{u_0}$ in $\mathbb R^2 \setminus \overline{\omega}$, whose trace on $\partial \omega$ is $u_0$, and such that $\widetilde{u_0} \in W^{1,\overline{p}}(\mathbb R^2 \setminus \overline{\omega})$.  Again, the regularity assumptions on $\partial \omega$ ensure that we can extend $u\in BV(\omega)$ as $\widetilde{u_0}$ in $\mathbb R^2\setminus \overline{\omega}$, thus obtaining a $BV(\mathbb R^2)$ function, still denoted by $u$.
In the same way we may extend, with an abuse of notations, any  $u_p$, as $\widetilde{u_0} \in \mathbb R^2 \setminus \overline{\omega}$, getting $u_p \in W^{1,p}(\mathbb R^2)$.

Clearly the functions, extended as above, are such that $\{u_p\}_p$ weakly $\ast$  converge to $u $ in $BV(\omega')$ for any bounded open set $\omega' \supset \supset \omega$.
Consequently the lower semicontinuity of the total variation with respect to the weak $\ast$ topology in $BV$, and Hoelder inequality provide the following chain of inequalities
$$
\begin{array}{ll}
\displaystyle{|D u|(\omega') \leq \liminf_{p \to 1}|D u_p|(\omega ')\leq \liminf_{p \to 1} \left(\int_{\omega'} |\nabla u_p|^p dx\right)^\frac{1}{p}|\omega '|^{1-\frac{1}{p}}=}\\
\displaystyle{=\liminf_{p \to 1}\left(\int_\omega|\nabla u_p|^p dx + \int_{\omega' \setminus \overline{\omega}}|\nabla \widetilde{u_0}|^pdx\right)^{\frac{1}{p}}, \hbox{ for every } p \leq \overline{p}. }
\end{array}
$$

As $\omega'$ shrinks to $\omega$, by \eqref{20AGZAIMETA}, we obtain the so called $\Gamma$-liminf inequality
$$
\displaystyle{|Du|(\overline{\omega}) \leq \liminf_{p \to 1}\left(\int_\omega|\nabla u_p|^p dx\right)^{\frac{1}{p}},  \hbox{ for every } p \leq \overline{p}.}
$$
For what concerns the upper bound, we invoke Proposition \ref{prop2.3Aimeta}, thus for every $u \in BV(\omega)$ we get the existence of a sequence $\{u_p\}_p\in W^{1,p}_{u_0}(\omega)$ such that
$$
\begin{array}{ll}
\displaystyle{\lim_{p \to 1}\int_\omega |u_p- u|^{1 \ast}dx=0,}\\
\\
\displaystyle{\lim_{p \to 1}\left(\int_\omega |\nabla u_p|^p dx \right)^{\frac{1}{p}}=|D u|(\omega)+ \int_{\partial \omega} |u-u_0|d{\cal H}^1},
\end{array}
$$
and that concludes the proof.
\end{proof}

The following result carries Proposition \ref{prop2.3Aimeta} to more general integrals.

\begin{Proposition}\label{prop3.2Aimeta}
Let $O \subset \mathbb R^N$  be some bounded open set, piecewise $C^1$. Let $W:\mathbb R^N \to [0, +\infty[$ be a continuous, positively $1$-homogeneous function such that
\begin{equation}\label{Wgrowth}
\frac{1}{C}|\xi|\leq W(\xi)\leq C|\xi| \hbox{ for every }\xi \in \mathbb R^N,
\end{equation}
for a suitable positive constant $C$. Let $u_1\in W^{1-\frac{1}{{\bar p}},{\bar p}}(\partial O)$, for some ${\bar p} >1$. Then, for every $u \in BV(O)$, and for every $1<p \leq \overline{p}$, there exists $U_p \in W^{1,p}(O)$, $U_p=u_1$ on $\partial O$, such that
$$
\begin{array}{ll}
\displaystyle{\lim_{p \to 1}\int_O (W(\nabla U_p))^p dx = \int_O W\left(\frac{d D u}{d |D u|}\right)d |D u|+ \int_{\partial O}W((u-u_1){\bf n})d{\cal H}^{N-1}},\\
\displaystyle{\lim_{p \to 1}\int_O |U_p-u|^{1^\ast}dx =0,}
\end{array}
$$ 
where ${\bf n}$ is the unit exterior normal to $\partial O$, and $1^\ast =\frac{N}{N-1}$.
\end{Proposition}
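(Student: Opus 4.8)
This is the anisotropic analogue of Proposition \ref{prop2.3Aimeta}, and since it only asks for \emph{one} family $\{U_p\}$ realising the two limits, it is purely constructive: no lower bound, and in particular no convexity of $W$, is needed. The plan is to revisit the recovery sequence of \cite{D}, \cite{G} underlying Proposition \ref{prop2.3Aimeta} --- morally, an interior mollification of $u$ converging strictly in $BV$, glued across a thin collar about $\partial O$ to the boundary datum $u_1$ --- and to estimate the anisotropic energy $\int_O(W(\nabla U_p))^p\,dx$ along it. Two ingredients carry the argument: Reshetnyak's \emph{continuity} theorem for the interior part (this is exactly where the continuity and positive $1$-homogeneity of $W$, and not any convexity, enter, since a mollification converges strictly in $BV$ to $u$); and the fact that in the collar $\nabla U_p$ is asymptotically parallel to the normal, so that positive $1$-homogeneity turns the layer integral into $\int_{\partial O}W((u-u_1){\bf n})\,d{\cal H}^{N-1}$.

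Concretely, use \eqref{134Z} to fix $\widehat{u_1}\in W^{1,{\bar p}}(O)$ with $\gamma_O(\widehat{u_1})=u_1$; extend $u$ to $BV(\mathbb R^N)$ with no singular mass on $\partial O$ (e.g.\ by a trace-preserving reflection) and let $u^\delta\in C^\infty(\overline O)$ be the mollification of this extension restricted to $\overline O$, so that $u^\delta\to u$ strictly in $BV(O)$ and in $L^{1^\ast}(O)$, and $\gamma_O(u^\delta)\to\gamma_O(u)$ in $L^1(\partial O)$. Then Reshetnyak's continuity theorem gives
$$
\int_O W(\nabla u^\delta)\,dx\;\longrightarrow\;\int_O W\!\left(\frac{dDu}{d|Du|}\right)d|Du|\qquad(\delta\to0),
$$
while $\int_{\partial O}W((\gamma_O(u^\delta)-u_1){\bf n})\,d{\cal H}^{N-1}\to\int_{\partial O}W((u-u_1){\bf n})\,d{\cal H}^{N-1}$ by Vitali's theorem, the continuity of $W$ and the upper bound in \eqref{Wgrowth}. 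With $d(\cdot):=\mathrm{dist}(\cdot,\partial O)$ and $\psi_h$ the affine profile with $\psi_h(0)=0$, $\psi_h\equiv1$ on $\{d\ge h\}$, set
$$
U_p:=\psi_h(d)\,u^\delta+(1-\psi_h(d))\,\widehat{u_1}\qquad\text{on }O,
$$
with $\delta=\delta(p)\to0$ and $h=h(p)\to0$ chosen at the end. Then $U_p\in W^{1,p}(O)$ for $1<p\le{\bar p}$, $\gamma_O(U_p)=u_1$, and $U_p\to u$ in $L^{1^\ast}(O)$ (the collar has vanishing measure and $W^{1,{\bar p}}(O)\hookrightarrow L^{1^\ast}(O)$ strictly since ${\bar p}>1$). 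On $\{d\ge h\}$ one has $\nabla U_p=\nabla u^\delta$, which is bounded, so \eqref{Wgrowth} and dominated convergence give $\int_{\{d\ge h\}}(W(\nabla u^\delta))^p\,dx\to\int_O W(\nabla u^\delta)\,dx$ as $p\to1$. In the collar,
$$
\nabla U_p=\psi_h'(d)\,(u^\delta-\widehat{u_1})\,\nabla d+\psi_h(d)\,\nabla u^\delta+(1-\psi_h(d))\,\nabla\widehat{u_1};
$$
since $|\psi_h'|\sim h^{-1}$ overwhelms the last two terms (whose $L^p(\{d<h\})$ norms vanish with $h$) while $|\nabla d|=1$ with $\nabla d=\pm{\bf n}$ on $\partial O$, the continuity and positive $1$-homogeneity of $W$ give, after integrating in the normal coordinate and tuning $h(p)$ so that $h(p)^{1-p}\to1$ (e.g.\ $h(p)=p-1$),
$$
\int_{\{d<h\}}(W(\nabla U_p))^p\,dx\;\longrightarrow\;\int_{\partial O}W((\gamma_O(u^\delta)-u_1){\bf n})\,d{\cal H}^{N-1}\qquad(p\to1),
$$
with ${\bf n}$ oriented as in the statement. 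Adding the two contributions, letting $p\to1$ and then $\delta\to0$, and finally extracting a diagonal in $(p,\delta(p),h(p))$, produces $\{U_p\}$ with the two asserted limits.

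The main obstacle is making the collar estimate rigorous. One must: (i) trade the exponent $p$ against the layer width, which is what forces the tuning $h(p)^{1-p}\to1$; (ii) control the $O(1)$ tangential part of $\nabla U_p$ against its $O(h^{-1})$ normal part uniformly as $h\to0$ and $p\to1$ together, so that $W(\nabla U_p)$ may be replaced by $|\psi_h'(d)|\,W((u^\delta-\widehat{u_1})\,\nabla d)$ up to a vanishing error; (iii) keep track of the orientation of ${\bf n}$, which genuinely matters because $W$ need not be even, carefully identifying which of $\pm{\bf n}$ the collar gradient approaches so as to reach the sign in the statement; and (iv) avoid double-counting the jump of $u$ across $\partial O$ --- which is precisely why $u$ is mollified \emph{inside} $O$ while keeping its trace, rather than mollifying the extension of $u$ by $u_1$: the defect $\gamma_O(u)-u_1$ is then produced exactly once, inside the collar. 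Once the collar is under control, the diagonalisation and the verification that all remaining errors are $o(1)$ under \eqref{Wgrowth} are routine.
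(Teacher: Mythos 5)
Your construction is sound, but it is a genuinely different route from the paper's. The paper does not build the recovery sequence by hand: it quotes Demengel's result (\cite[Proposition 2]{D2}) giving smooth $v_p$ with $v_p=u_1$ on $\partial O$ and $\int_O|\nabla v_p|^p dx\to |Du|(O)+\int_{\partial O}|u-u_1|\,d\mathcal H^{N-1}$, deduces via H\"older and the Goffman--Serrin relaxation result that $\int_O|\nabla v_p|\,dx$ converges to the same value (i.e.\ strict convergence of the functions extended by $u_1$ on a larger set $O'$), applies Reshetnyak's continuity theorem on $O'$ and shrinks $O'$ to $O$ to convert $|\cdot|$ into $W$ at exponent $1$, and finally repairs the mismatch between $\int_O W(\nabla v_p)\,dx$ and $\int_O W^p(\nabla U_p)\,dx$ by a further smooth approximation with an equi-integrability argument and a double diagonal extraction. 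Your proof instead constructs $U_p$ directly (interior mollification converging strictly, glued to a Sobolev lift of $u_1$ across a collar of width $h(p)$ with $h(p)^{1-p}\to 1$), so it is self-contained, does not rely on \cite{D2}, and makes the exponent bookkeeping explicit through the choice $h(p)=p-1$; the price is exactly the collar analysis you list in (i)--(iv), which is routine but longer than the paper's citation-based argument (note that $W$ continuous and $1$-homogeneous need not be locally Lipschitz, so the cross-term estimate in (ii) should be run through the modulus of continuity of $W$ on the sphere, splitting the collar according to whether the normal term dominates). One caveat on (iii): since $\nabla(\mathrm{dist}(\cdot,\partial O))=-\mathbf n$ near $\partial O$, your collar actually produces $\int_{\partial O}W((u_1-u)\mathbf n)\,d\mathcal H^{N-1}$, i.e.\ the jump of the extension by $u_1$, not the integrand $W((u-u_1)\mathbf n)$ literally written in the statement; but this is what the paper's own Reshetnyak-based proof yields as well, and the paper uses both orientations interchangeably elsewhere (compare \eqref{H10} and \eqref{213DAT} with the statements of Proposition \ref{prop3.2Aimeta} and Lemma \ref{comelemma22DAT}), so this is a sign sloppiness inherited from the paper rather than a gap in your argument.
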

\begin{proof}[Proof]
Let $u \in BV(O)$, first  we claim that for every sequence $\{p\}$ converging to $1$, with $p \geq 1$, it is possible to find  a subsequence, still denoted  by $\{p\}$ and a sequence $\{v_p\}\subset W^{1,p}(O)\cap C^\infty(O)$, with $v_p=u_1$ on $\partial O$ such that
\begin{equation}\label{limit1ast}
\displaystyle{\lim_{p \to 1}\int_O|v_p - u|^{1^\ast}dx =0}
\end{equation} 
and
\begin{equation}\label{Resh}
\displaystyle{\lim_{p \to 1}\int_O W(\nabla v_p)dx= \int_O W\left(\frac{d D u}{d |Du|}\right)d |D u|+ \int_{\partial O}W((u-u_1){\bf n})d {\cal H}^{N-1}.}
\end{equation}
To prove the claim we observe that \cite[Proposition 2]{D2} ensures that there exists a sequence $\{v_p\}_p$ such that $v_p \in W^{1,p}(O)\cap C^\infty(O)$, and $v_p = u_1$ on $\partial O$, \eqref{limit1ast} holds, 
$\displaystyle{\lim_{p \to 1} \int_O |v_p- u|^{1^\ast }dx=0}$ and $\displaystyle{\lim_{p \to 1}\int_O |\nabla v_p|^p dx=}$ $\displaystyle{ |D u|(O)+ \int_{\partial O}|u-u_1|d {\cal H}^{N-1}.}$
This in turn, by virtue of Hoelder inequality, implies that 
\noindent $ \displaystyle{\lim_{p \to 1}\int_O |\nabla v_p|dx \leq}$ $\displaystyle{ \lim_{p \to 1}\left(\int_O |\nabla v_p|^pdx\right)^\frac{1}{p}|O|^{1-\frac{1}{p}}}$ $\displaystyle{=|D u|(O)+ \int_{\partial O}|u-u_1|d {\cal H}^{N-1} .}$

\noindent The opposite inequality follows by well known relaxation results, see \cite{GS}, where the functional $|D v|(O)+ \int_{\partial O}|v-u_1|d {\cal H}^{N-1}$ turns out to be the relaxed functional (with respect to $L^1(O)$ strong convergence) of $\left\{
\begin{array}{ll}\int_O |\nabla v|dx & \hbox{ if }v \in W^{1,1}_{u_1}(O), \\
+\infty & \hbox{ if }v \in BV(O) \setminus W^{1,1}_{u_1}(O).
\end{array}
\right.  
 $ 

\noindent Now, observing that $u_1\in W^{1-\frac{1}{p},p}(\partial O)$ can be extended as a $W^{1,p}(\mathbb R^N\setminus O)$ function, still denoted by $u_1$ outside $O$ (see Proposition \ref{propZa}),  we can extend $v_p$ and $u$ as $u_1$ outside $O$, thus obtaining a $W^{1,p}(\mathbb R^N)$ function and a $BV(\mathbb R^N)$ one (see the end of subsection \ref{gammaconvergence}  and \cite[Corollary 3.89]{AFP1}), respectively.
Consequently for every open set $O'\supset \supset O$, applying Reshetnyak's continuity theorem \cite[Theorem 2.39]{AFP1}, it results
$$
\begin{array}{ll}
\displaystyle{\lim_{p \to 1}\int_{O'}|\nabla v_p|dx=\lim_{p\to 1} \left(\int_{O'\setminus O}|\nabla u_1|dx+ \int_O|\nabla v_p|dx\right) =}
\displaystyle{|Du|(O)+ \int_{\partial O}|u-u_1|d{\cal H}^{N-1}+\int_{O'\setminus O}|\nabla u_1|dx.}
\end{array}
$$   
Thus, as $O'$ shrinks to $O$, we obtain \eqref{Resh} and this proves the claim.


Next, the density of smooth functions in $W^{1,p}(O)$, with respect to strong $W^{1,p}$ convergence, the Sobolev embedding theorems and the continuity of $W$ imply that there exists a further sequence $\{w_{q}\}_{q} \in W^{1,p}(O)\cap C^\infty(O)$, with $w_{q}\equiv u_1$ on $\partial O$,  converging strongly in $W^{1,p}(O)$ to $v_p$ as $q\to 1$, such that $\displaystyle{\lim_{q\to 1}\int_O |v_p-w_q|^{1^\ast}dx =0}$, $\nabla w_{q}$ and $W^q(\nabla w_{q})$ pointwise converge a.e. to $\nabla v_p$ and $W(\nabla v_p)$ respectively, as $q \to 1$.

The growth from above in \eqref{Wgrowth}, and Hoelder inequality entail that $ W^q(\nabla w_{q})$ is equintegrable, thus we can conclude that $\int_O W^q(\nabla w_{q})dx$ converges to $\int_O W(\nabla v_p)dx$ as $q \to 1$. 

Finally a diagonal argument guarantees that there exists another sequence in $W^{1,p}(O)\cap C^\infty(O)$, denoted by $\{U_p\}$ such that $U_p\equiv u_1$ on $\partial O$, \eqref{limit1ast} holds and
\begin{equation}\label{due}
\displaystyle{\lim_{p \to 1}\int_O W^p(\nabla U_p) dx =\int_O W\left(\frac{d DU}{d|DU|}\right) d |DU|+ \int_{\partial O}W((U-u_1){\bf n})d {\cal H}^{N-1}.}
\end{equation}

Since the above arguments can be repeated, extracting, with an abuse of notation, a subsequence $\{p\}$ and a corresponding sequence $\{U_p\}$, verifying \eqref{limit1ast} and \eqref{due} from any given $\{p\}$ and $\{v_p\}$, we can conclude that this construction is possible for any $p \to 1$ and this concludes the proof.
 
\end{proof}

Let $ \omega \subset \mathbb R^2$ be a bounded open set, piecewise $C^1$ and let  $H_{p,0}:BV(\omega)\to \mathbb R$ be the family of functionals defined as
\begin{equation}\label{Hp0}
H_{p,0}(u):= \left\{
\begin{array}{ll}
\displaystyle{\left(\int_\omega W^p(\nabla u)dx \right)^{\frac{1}{p}}} &\hbox{ if } u \in W^{1,p}_{u_0}(\omega),\\
\\
+\infty &\hbox{otherwise,}
\end{array}
\right.
\end{equation}
where $W:\mathbb R^N \to [0,+\infty[$ is  convex, positively $1$-homogeneous and verifying \eqref{Wgrowth}.
\begin{Theorem}\label{GammaWp0to10}
The family of functionals $\{H_{p,0}\}_p$ defined in \eqref{Hp0}, $\Gamma$-converges, as $p $ tends to $1$ and with respect to $L^1$ strong convergence, towards the functional $H_{1,0}:BV(\omega)\to \mathbb R$ defined as 
\begin{equation}\label{H10}
\displaystyle{H_{1,0}(u):=\int_\omega W\left(\frac{d Du}{d|Du|}\right)d |D u|+ \int_{\partial \omega}W((u_0-u)\nu) d {\cal H}^{N-1},}
\end{equation}
where $\nu$ denotes the unit exterior normal to $\partial \omega$.
\end{Theorem}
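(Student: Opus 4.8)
The plan is to follow the scheme of the proof of Theorem~\ref{Gammap0to10}, replacing the isotropic total variation by the anisotropic functional associated with $W$, and using Proposition~\ref{prop3.2Aimeta} (in place of Proposition~\ref{prop2.3Aimeta}) to build the recovery sequence and a Reshetnyak-type lower semicontinuity for the liminf inequality. Note that the convexity of $W$, which is imposed here but not in Proposition~\ref{prop3.2Aimeta}, is precisely what will be needed to close the lower bound.

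For the $\Gamma$-$\liminf$ inequality, let $u_p\to u$ strongly in $L^1(\omega)$; we may assume $\liminf_{p\to1}H_{p,0}(u_p)<+\infty$ and, up to a subsequence, that it is a finite limit with $H_{p,0}(u_p)\le C$. Then $u_p\in W^{1,p}_{u_0}(\omega)$ and, by the lower bound in \eqref{Wgrowth} together with H\"older's inequality, $|Du_p|(\omega)\le C'$ uniformly for $1<p\le\overline p$, so $\{u_p\}$ is bounded in $BV(\omega)$ and converges weakly-$\ast$ to $u$. As in the proof of Theorem~\ref{Gammap0to10}, using Proposition~\ref{propZa} we fix an extension $\widetilde{u_0}\in W^{1,\overline p}(\mathbb R^2\setminus\overline\omega)$ of $u_0$; since each $u_p$ attains the Sobolev trace $u_0$ on $\partial\omega$, gluing with $\widetilde{u_0}$ yields $u_p\in W^{1,p}(\mathbb R^2)$, while the piecewise $C^1$ regularity of $\partial\omega$ (cf. \cite[Corollary 3.89]{AFP1}) lets us extend $u$ to $BV(\mathbb R^2)$ with $Du$ carrying on $\partial\omega$ the jump between the interior trace of $u$ and $u_0$. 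On any bounded $\omega'\supset\supset\omega$ we then have $u_p\weakstar u$ in $BV(\omega')$, so by Reshetnyak lower semicontinuity (applicable since $W$ is convex, positively $1$-homogeneous and of linear growth, cf. \cite{AFP1}),
\[
\int_{\omega'}W\!\left(\frac{dDu}{d|Du|}\right)d|Du|\ \le\ \liminf_{p\to1}\int_{\omega'}W(\nabla u_p)\,dx .
\]
Splitting the right-hand integral as $\int_\omega+\int_{\omega'\setminus\overline\omega}$, applying H\"older on $\omega$ (and using $|\omega|^{1-1/p}\to1$), and then letting $\omega'\downarrow\omega$ so that $\int_{\omega'\setminus\overline\omega}W(\nabla\widetilde{u_0})\,dx\to0$, one obtains $H_{1,0}(u)\le\liminf_{p\to1}\big(\int_\omega W^p(\nabla u_p)\,dx\big)^{1/p}$, the boundary term in \eqref{H10} appearing as the portion of $\int_{\overline\omega}W(dDu/d|Du|)\,d|Du|$ supported on $\partial\omega$, rewritten via the $1$-homogeneity of $W$ and the normal orientation fixed in Section~\ref{gammaconvergence}.

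For the $\Gamma$-$\limsup$ inequality, given $u\in BV(\omega)$ apply Proposition~\ref{prop3.2Aimeta} with $O=\omega$, $N=2$, $u_1=u_0$: this produces $U_p\in W^{1,p}(\omega)$ with $U_p=u_0$ on $\partial\omega$, hence $U_p\in W^{1,p}_{u_0}(\omega)$ and $H_{p,0}(U_p)=\big(\int_\omega W^p(\nabla U_p)\,dx\big)^{1/p}$, such that $\int_\omega|U_p-u|^{2}\,dx\to0$ (so $U_p\to u$ in $L^1(\omega)$, $\omega$ being bounded) and $\int_\omega W^p(\nabla U_p)\,dx\to H_{1,0}(u)$, the boundary integrand of Proposition~\ref{prop3.2Aimeta} matching that of \eqref{H10} once the convention for the exterior normal is fixed consistently with Section~\ref{gammaconvergence}. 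Since $t\mapsto t^{1/p}\to t$ as $p\to1$ for every $t\ge0$, this gives $\lim_{p\to1}H_{p,0}(U_p)=H_{1,0}(u)$, so $\{U_p\}$ is a recovery sequence. Together with the liminf inequality this yields $\Gamma$-$\lim_{p\to1}H_{p,0}=H_{1,0}$ in the $L^1(\omega)$-strong topology.

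The main obstacle is the lower bound, for two reasons. First, one must invoke lower semicontinuity for the genuinely anisotropic integrand $W$ along weakly-$\ast$ converging sequences in $BV(\omega')$, which is exactly where the convexity hypothesis on $W$ enters. Second, and more delicate, the boundary term $\int_{\partial\omega}W((u_0-u)\nu)\,d\mathcal H^1$ is invisible from the competitors $\{u_p\}$ restricted to $\omega$ — each $u_p$ attains $u_0$ as a Sobolev trace, while its $L^1$-limit $u$ in general does not — and it only surfaces after the extension by $\widetilde{u_0}$ concentrates the ``boundary slip'' of $Du$ on $\partial\omega$; tracking the jump orientation so that this contribution is precisely $\int_{\partial\omega}W((u_0-u)\nu)\,d\mathcal H^1$ as in \eqref{H10} is the point that requires care.
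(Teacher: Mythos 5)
Your proposal is correct and follows essentially the same route as the paper: the upper bound is taken directly from Proposition \ref{prop3.2Aimeta}, and the lower bound repeats the extension-by-$\widetilde{u_0}$ argument of Theorem \ref{Gammap0to10}, replacing the lower semicontinuity of the total variation by that of the anisotropic functional (you invoke Reshetnyak's theorem where the paper cites \cite{GMS}, which is an equivalent tool here since $W$ is convex, positively $1$-homogeneous and of linear growth). Your remarks on the sign/orientation of the boundary term and on where convexity enters are consistent with (indeed more careful than) the paper's own brief proof.
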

\begin{proof}[Proof]
The proof develops along the same lines as Theorem \ref{Gammap0to10}. Namely the lower bound can be proved arguing exactly as in the latter theorem, just exploiting the lower semicontinuity with respect to $BV$-weak $\ast$ convergence, of the functional $H_{1,0}$ as proven in \cite{GMS}.  
On the other hand the upper bound is immediate consequence of Proposition \ref{prop3.2Aimeta}. 
\end{proof}
Let ${\overline p} >1$ and let $u_0 \in W^{1-\frac{1}{\overline p},{\overline p}}(\partial \omega)$. Clearly the regularity of $\omega$,  \eqref{134Z} and Proposition \ref{propZa} ensure that $u_0$ can be naturally extended to a function in $W^{1,p}_{\rm loc}(\mathbb R^2)$, in turn with an abuse of notations, this latter function can be regarded as a function depending also on $x_3$, $u \in W^{1,p}_{\rm loc}(\mathbb R^3)$.

Having in mind the functionals $\{I_{p,\e}\}_{p,\e}$ quoted in \eqref{p-e-functional} in the Introduction, we define, for every $p>1$ and $\e>0$, $F_{p,\e}:BV(\Omega)\to \mathbb R$ as the functionals
\begin{equation}\label{Fpe}
F_{p,\e}(u):=\left\{
\begin{array}{ll}
\displaystyle{\left(\int_{\O}\left|\nabla_\alpha u\Big| \frac{1}{\e}\nabla_3 u \right|^p dx\right)^{\frac{1}{p}}} &\hbox{ if }u \in W^{1,p}_{u_0, {\rm lat}}(\O),
\\
\\
+ \infty &\hbox{ otherwise,}
\end{array}
\right.
\end{equation}

\noindent where $W^{1,p}_{u_0, {\rm lat}}(\Omega) =\left\{u \in W^{1,p}(\Omega): \right.$ 
$\left. u \equiv u_0 \;{\cal H}^2- \hbox{a.e. on } \partial \omega \times \left(-\frac{1}{2}, \frac{1}{2}\right)\right\},$
(cf. Subsection \ref{gammaconvergence} and observe that ${\cal H}^2\left(\overline{\omega \times \left\{-\frac{1}{2}, \frac{1}{2}\right\}}\setminus \omega \times \left\{-\frac{1}{2}, \frac{1}{2}\right\} \right)=0$).


Analogously, for $p=1$, let $W_\e:\mathbb R^3 \to \mathbb R$ be the function defined as $W_\e(\xi)=W_\e(\xi_1,\xi_2,\xi_3)=\left|\xi_\alpha|\frac{1}{\e}\xi_3\right|$, we can recall the functionals $I_{1,\e}:BV(\Omega)\to \mathbb R$, introduced in \eqref{1-e-functional1}, as
\begin{equation}\label{F1e}
\displaystyle{I_{1,\e}(u):=\left|D_\alpha u\Big|  \frac{1}{\e}D_3 u\right|(\Omega)+ \int_{\partial \omega \times \left(-\frac{1}{2},\frac{1}{2}\right)} W_\e((u-u_0)\nu)d{\cal H}^2},
\end{equation}
where $\nu$ is the unit vector perpendicular to $\partial \omega \times \left(-\frac{1}{2},\frac{1}{2}\right)$.

We observe that the restriction of $I_{1,\e}$ to $W^{1,1}_{u_0, {\rm lat}}(\Omega)$   is given by \eqref{1-e-functional}.

\noindent Moreover, for every $\e >0$, let $G_{1,\e}:BV(\Omega)\to [0,+\infty)$ be the functionals defined as 
\begin{equation}\label{G1e}
G_{1,\e}(u):=\left\{
\begin{array}{ll}
\int_\Omega \left|\nabla_\alpha u \Big| \frac{1}{\e}\nabla_3 u\right|dx &\hbox{ if } u \in W^{1,1}_{u_0, {\rm lat}}(\Omega),\\
\\
+\infty &\hbox{ otherwise.}
\end{array}
\right.
\end{equation}

\noindent Then, their relaxed functionals (with respect to $L^1$- strong topology) coincide with the functionals $I_{1,\e}$ in \eqref{F1e} (cf. \cite[Theorem 3.4]{MZ}).

To prove the $\Gamma$-convergence of $\{F_{p,\e}\}_p$ towards $I_{1,\e}$ in \eqref{F1e} as $p \to 1$ we need some preliminary results in the same spirit of those proposed in \cite{MZ}, which need the assumption ${\cal H}^{2}(\overline{\omega \times \left\{-\frac{1}{2},\frac{1}{2}\right\}}\setminus \omega \times \left\{-\frac{1}{2},\frac{1}{2}\right\})=0$. 
We also observe that, having in mind the subsequent applications to $-\Delta_1$- type equations, and for the sake of simplicity in the exposition of the proof, we consider an energy density $W$ positively $1$-homogeneous, but analogous results hold replacing $W$ with its recession function $W^\infty$ where necessary.


\begin{Lemma}\label{prop31MZ}
Let $\omega$ be a bounded open set in $\mathbb R^2$, piecewise $C^1$, and $\Omega =\omega \times \left(-\frac{1}{2},\frac{1}{2}\right)$,  let $W:\mathbb R^3 \to [0,+\infty[$ be a convex and positively $1$-homogeneous function, satisfying \eqref{Wgrowth}. Let $1<  {\bar p}$ and let $u_0 \in W^{\frac{{\bar p -1}}{\bar p}, {\bar p}}(\partial \omega)$, then for every $u \in BV(\Omega)$, for every $1<p \leq {\bar p}$ it results
\begin{equation}\label{lowerboundp1}
\displaystyle{\int_\Omega W\left(\frac{d Du}{d |D u|}\right)d |D u| + \int_{\partial \omega \times \left(-\frac{1}{2},\frac{1}{2}\right)} W((u_0-u)\nu)d{\cal H}^2 \leq \liminf_{p \to 1}\int_\Omega W^p(\nabla u_p)dx}
\end{equation}
for every sequence $\{u_p\}_p$ with $u_p\in W^{1,p}_{u_0, {\rm lat}(\Omega)}$, such that $u_p \to u$ in $L^1(\Omega)$.
\end{Lemma}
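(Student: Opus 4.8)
The plan is to reduce the lower bound \eqref{lowerboundp1} to a statement about a fixed exterior extension combined with the lower semicontinuity of the $BV$-energy, essentially transcribing the argument of Theorem \ref{Gammap0to10} to the anisotropic, three-dimensional, lateral-boundary setting. First I would dispose of the trivial case: if $\liminf_{p\to 1}\int_\Omega W^p(\nabla u_p)\,dx=+\infty$ there is nothing to prove, so I may assume, after passing to a subsequence realizing the liminf, that $\int_\Omega W^p(\nabla u_p)\,dx\le C$ for all $p$ close to $1$. By the growth bound \eqref{Wgrowth} from below and Hölder's inequality, $\int_\Omega|\nabla u_p|\,dx\le C\,|\Omega|^{1-1/p}\int_\Omega W^p(\nabla u_p)^{1/p}\,dx\cdot(\dots)$ — more precisely $|Du_p|(\Omega)\le C\bigl(\int_\Omega W^p(\nabla u_p)\,dx\bigr)^{1/p}|\Omega|^{1-1/p}\le C'$ uniformly for $1<p\le\bar p$; combined with the $L^1$-convergence $u_p\to u$ this gives $u\in BV(\Omega)$ and $u_p\weakstar u$ in $BV(\Omega)$.

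Next I would handle the lateral boundary term by the exterior-extension device already used in the paper. Since $u_0\in W^{\frac{\bar p-1}{\bar p},\bar p}(\partial\omega)$, Proposition \ref{propZa} produces $\widetilde{u_0}\in W^{1,\bar p}(\mathbb R^2\setminus\overline\omega)$ with trace $u_0$ on $\partial\omega$; viewing $\widetilde{u_0}$ as independent of $x_3$ we get $\widetilde{u_0}\in W^{1,\bar p}\bigl((\mathbb R^2\setminus\overline\omega)\times(-\tfrac12,\tfrac12)\bigr)$, and since $u_p\equiv u_0$ on $\partial\omega\times(-\tfrac12,\tfrac12)$ while $u\in BV$ has lateral trace to be matched, I extend every $u_p$ and $u$ by $\widetilde{u_0}$ across the lateral boundary, obtaining $u_p\in W^{1,p}(\omega'\times(-\tfrac12,\tfrac12))$ and $u\in BV(\omega'\times(-\tfrac12,\tfrac12))$ for any bounded open $\omega'\supset\supset\omega$, with $u_p\weakstar u$ there (the jump created on $\partial\omega\times(-\tfrac12,\tfrac12)$ in the extension of $u$ is exactly $(u-u_0)\otimes\nu$). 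Here one must be careful that the extension is only across the lateral boundary and leaves the top and bottom faces $\omega\times\{\pm\tfrac12\}$ untouched, so no spurious contribution appears there; the hypothesis ${\cal H}^2(\overline{\omega\times\{\pm\tfrac12\}}\setminus\omega\times\{\pm\tfrac12\})=0$ guarantees the boundary integral sees only the lateral part.

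Then I would apply lower semicontinuity. On the fixed cylinder $\omega'\times(-\tfrac12,\tfrac12)$ the functional $v\mapsto \int W\bigl(\tfrac{dDv}{d|Dv|}\bigr)d|Dv|$ is $BV$-weak-$\ast$ lower semicontinuous (Reshetnyak, cf. \cite[Theorem 2.38]{AFP1}, using convexity and $1$-homogeneity of $W$), and by Hölder $\int_{\omega'\times(-1/2,1/2)}W(\nabla u_p)\,dx\le \bigl(\int W^p(\nabla u_p)\,dx\bigr)^{1/p}|\omega'\times(-\tfrac12,\tfrac12)|^{1-1/p}$; passing to the liminf and noting the contribution $\int_{(\omega'\setminus\overline\omega)\times(-1/2,1/2)}W(\nabla\widetilde{u_0})\,dx$ is the same on both sides, I get
\[
\int_{\omega'\times(-1/2,1/2)} W\!\left(\frac{dDu}{d|Du|}\right)d|Du|\le \liminf_{p\to1}\int_\Omega W^p(\nabla u_p)\,dx + \int_{(\omega'\setminus\overline\omega)\times(-1/2,1/2)}\!\!W(\nabla\widetilde{u_0})\,dx .
\]
Letting $\omega'$ shrink to $\omega$, the last integral vanishes (absolute continuity of the integral) and the left side tends, by the decomposition of $Du$ on $\overline\Omega$ and the identification of the jump part on $\partial\omega\times(-\tfrac12,\tfrac12)$ with $(u-u_0)\otimes\nu\,{\cal H}^2$, to $\int_\Omega W(\tfrac{dDu}{d|Du|})d|Du|+\int_{\partial\omega\times(-1/2,1/2)}W((u_0-u)\nu)\,d{\cal H}^2$, which is \eqref{lowerboundp1}. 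The main obstacle is the bookkeeping in this last limit: one must check that under the exterior extension the measure $Du$ on the enlarged cylinder restricted to $\overline\Omega$ decomposes with a jump along the lateral face whose density is precisely $(u-u_0)\nu$ and that $W$ being $1$-homogeneous makes $W((u-u_0)\otimes\nu)$ equal to $W((u-u_0)\nu)$ in the scalar-$u$ case, together with verifying that the anisotropic gradient $(\nabla_\alpha u\,|\,\tfrac1\e\nabla_3 u)$ is accommodated by taking $W=W_\e$ throughout (which is convex, $1$-homogeneous and satisfies \eqref{Wgrowth} with a constant depending on $\e$), so all the cited lemmata apply verbatim with $W_\e$ in place of $W$.
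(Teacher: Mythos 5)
Your argument is correct, but it is more self-contained than what the paper actually does. The paper's own proof of Lemma \ref{prop31MZ} is a one-liner: the left-hand side of \eqref{lowerboundp1} is exactly the relaxed functional with lateral boundary penalization, whose lower semicontinuity with respect to strong $L^1(\Omega)$ convergence is taken from \cite[Proposition 3.1]{MZ}; since $u_p\equiv u_0$ on $\partial\omega\times\left(-\frac{1}{2},\frac{1}{2}\right)$ the penalization vanishes along the sequence, and H\"older's inequality $\int_\Omega W(\nabla u_p)\,dx\le\left(\int_\Omega W^p(\nabla u_p)\,dx\right)^{1/p}|\Omega|^{1-1/p}$ then yields \eqref{lowerboundp1}. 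You instead reprove that lower semicontinuity by hand: extend $u_0$ outside $\omega$ via Proposition \ref{propZa}, glue $u_p$ and $u$ to $\widetilde{u_0}$ across the lateral boundary only, get weak-$\ast$ $BV$ compactness from the energy bound, apply Reshetnyak lower semicontinuity on the enlarged cylinder $\omega'\times\left(-\frac{1}{2},\frac{1}{2}\right)$ and let $\omega'$ shrink to $\omega$ --- precisely the scheme of the paper's proof of Theorem \ref{Gammap0to10}, transposed to the three-dimensional lateral-boundary setting and to a general convex, positively $1$-homogeneous $W$ (so that in the application one can take $W=W_\e$, as you note). Your route buys independence from the external reference and makes transparent where the boundary term comes from (it is the jump of the extended function along $\partial\omega\times\left(-\frac{1}{2},\frac{1}{2}\right)$); the paper's route is shorter. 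Two small bookkeeping points to tighten when writing it up: with $\nu$ the exterior normal, the jump density of the extension is $(u_0-u)\nu$ rather than $(u-u_0)\nu$, a sign that only matters because $W$ is not assumed even (for $W_\e$, a norm, it is immaterial, and the paper itself is not consistent about this orientation); and after H\"older you should spend one line on why $\liminf_{p\to1}\left(\int_\Omega W^p(\nabla u_p)\,dx\right)^{1/p}|\Omega|^{1-1/p}$ may be replaced by $\liminf_{p\to1}\int_\Omega W^p(\nabla u_p)\,dx$, which is legitimate along your subsequence because the energies are bounded and the exponents tend to $1$.
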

\begin{proof}[Proof]
The result easily follows from the lower semicontinuity with respect to $L^1(\Omega)$ strong topology of the left hand side of \eqref{lowerboundp1} as proven in \cite[Proposition 3.1]{MZ} and the Hoelder inequality.
\end{proof}

Now we introduce the following notations, already adopted in \cite{DAT, MZ}.
We say that an open set $O\subset \mathbb R^N$ is cone-shaped if and only if there exists $x_0 \in \mathbb R^N$, $S \subset \mathbb R^N$, such that 
$$
\displaystyle{O=\{(1-t)x_0 +t x: x \in S, t \in ]0,1[\}}.
$$ 
We call $x_0$ the vertex of $O$, $S$ the basis of $O$ and observe that, if $t \in ]0,1[$, then
$$
\displaystyle{x_0 +t (O- x_0)\subset O, \;\;\;\; x_0 + t(S-x_0)\subset O.}
$$
Let $x_0 \in \mathbb R^N$ and $S \subset \mathbb R^N$ we denote by $C_{x_0,S}$ the cone
$$
\displaystyle{C_{x_0,S}=\{(1-t)x_0+ t x: x \in S, t >0\}.}
$$
 In what follows we will consider cone-shaped sets of vertex $x_0$ and basis $S$ such that for any fixed $x \in S$, one has,
\begin{equation}\label{212MZ}
\displaystyle{\{(1-t)x_0+ t x: t \in [0,1]\}\cap S = \{x\}.}
\end{equation}

The following lemma develops along the lines of \cite[Lemma 2.1]{DAT}.

\begin{Lemma}\label{comelemma21DAT}
Let $1 < \overline{p}$ and let $W:\mathbb R^N \to [0,+\infty[$ be convex,  positively $1$-homogeneous and verify \eqref{Wgrowth}. Let $u_1\in W^{1,\overline{ p}}_{\rm loc}(\mathbb R^N)$, $O$ an open set piecewise $C^1$. Let $A,B$ be open sets such that $A \subseteq O, A \subset \subset B, O \setminus \overline{B}\not = \emptyset$, and let us assume that $O\cap B$ is piecewise $C^1$. Let $u \in BV(O)$, with $u=u_1$ a.e. in $O \setminus A$, then there exists $\{u_p\}_p$ such that $u_p \in W^{1,p}_{\rm loc}(\mathbb R^N)$ and $u_p \equiv u_1$ a.e. in $O\setminus B$ for every $1<p\leq {\bar p}$ and 
$$
\displaystyle{\lim_{p \to 1}\int_O |u_p-u|^{1^\ast}dx =0,}
$$
and
$$
\begin{array}{ll}
\displaystyle{\lim_{p \to 1}\int_O W^p(\nabla u_p) dx \leq \int_{O \cap B}W(\nabla u)dx +}\\
\\
\displaystyle{\int_A W\left(\frac{d D^s u}{d |D^s u|}\right)d |D^s u|+\int_{O \cap \partial A}W((u-u_1){\bf n})d{\cal H}^{N-1} +\int_{O \setminus A}W(\nabla u_1)dx.}
\end{array}
$$
\end{Lemma}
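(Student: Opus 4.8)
The plan is to reduce Lemma \ref{comelemma21DAT} to the already-established Proposition \ref{prop3.2Aimeta} by localizing the construction to the region where the singular part of $Du$ lives, and gluing with the fixed function $u_1$ elsewhere. First I would choose an intermediate open set $A'$ with $A \subset\subset A' \subset\subset B$ and $A' \cap O$ piecewise $C^1$ (possible since $O\cap B$ is piecewise $C^1$ and $A\subset\subset B$). On the "inner" piece $O \cap A'$, the function $u$ restricted there is a $BV$ function whose trace on $\partial(O\cap A')$ equals $u_1$ on the portion of the boundary lying outside $\overline A$ (where $u=u_1$) — and $u_1\in W^{1,\bar p}_{\rm loc}$ has a nice trace there — so Proposition \ref{prop3.2Aimeta} (with $W$ the given density, $O$ replaced by $O\cap A'$ and $u_1$ the boundary datum) furnishes a sequence $\{w_p\}_p \subset W^{1,p}(O\cap A')$ with $w_p = u|_{\partial(O\cap A')}$ on the relevant boundary, $\int|w_p-u|^{1^*}\to 0$, and
\[
\int_{O\cap A'} W^p(\nabla w_p)\,dx \to \int_{O\cap A'} W\!\left(\frac{dDu}{d|Du|}\right)d|Du| + \int_{O\cap \partial A'} W((u-u_1){\bf n})\,d{\cal H}^{N-1}.
\]
I then define $u_p := w_p$ on $O\cap A'$ and $u_p := u_1$ on $O \setminus A'$. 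Since the traces match on $O\cap\partial A'$, and $u=u_1$ on $O\setminus A\supset O\setminus A'$ so the traces of $u$ and $u_1$ agree there as well, $u_p\in W^{1,p}_{\rm loc}$ with no spurious jump across $\partial A'$; moreover $u_p=u_1$ on $O\setminus B$ as required since $A'\subset\subset B$. The $L^{1^*}$ convergence of $u_p$ to $u$ follows from the corresponding convergence of $w_p$ on $O\cap A'$ together with $u_p=u_1=u$ a.e. on $O\setminus A'$.

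The remaining task is to bound $\lim_p \int_O W^p(\nabla u_p)\,dx$ by the claimed quantity. Splitting the integral as $\int_{O\cap A'} + \int_{O\setminus A'}$, the first term converges to the Proposition \ref{prop3.2Aimeta} limit above, and the second equals $\int_{O\setminus A'} W^p(\nabla u_1)\,dx$, which tends to $\int_{O\setminus A'} W(\nabla u_1)\,dx$ by dominated convergence (using \eqref{Wgrowth} and $u_1\in W^{1,\bar p}_{\rm loc}$, so $W^p(\nabla u_1)\le C^p|\nabla u_1|^p$ is dominated on the bounded set $O\setminus A'$ for $p\le\bar p$). Now I must compare these limits, term by term, against the target. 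Writing the target as $\int_{O\cap B}W(\nabla u)\,dx + \int_A W(dD^su/d|D^su|)\,d|D^su| + \int_{O\cap\partial A}W((u-u_1){\bf n})\,d{\cal H}^{N-1} + \int_{O\setminus A}W(\nabla u_1)\,dx$: I decompose $|Du| = \nabla u\,{\cal L}^N + D^su$ on $O\cap A'$, note $D^su$ is supported in $\overline A \subseteq A'$ so $\int_{O\cap A'}W(dDu/d|Du|)\,d|Du| = \int_{O\cap A'}W(\nabla u)\,dx + \int_A W(dD^su/d|D^su|)\,d|D^su|$, and observe $\int_{O\cap A'}W(\nabla u)\,dx \le \int_{O\cap B}W(\nabla u)\,dx$ since $A'\subset\subset B$ and $W\ge 0$. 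Combining with $\int_{O\setminus A'}W(\nabla u_1)\,dx\le \int_{O\setminus A}W(\nabla u_1)\,dx$, the only discrepancy is the boundary term: I get a term on $O\cap\partial A'$ rather than on $O\cap\partial A$. To handle this I would let $A'$ shrink down to $A$ (taking a sequence $A'_k\downarrow A$), use a diagonal argument over the sequences $\{u_p\}$ associated to each $A'_k$, and invoke the upper semicontinuity / continuity of $\rho\mapsto \int_{O\cap\partial A_\rho}W((u-u_1){\bf n})\,d{\cal H}^{N-1}$ along a suitable exhaustion — or, more cleanly, absorb the $\partial A'$ boundary term into $\int_{O\cap B}W(\nabla u)\,dx$ plus a vanishing error, exploiting that $u=u_1$ on $O\setminus A$ means $u-u_1$ is small near $\partial A'$ only after shrinking; since the inequality in the statement is an inequality ($\le$), it suffices to make the $\partial A'$-term controlled by the target.

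The main obstacle I anticipate is exactly this boundary-term bookkeeping: Proposition \ref{prop3.2Aimeta} naturally produces a surface energy on $\partial A'$ with the "wrong" (outer) set, whereas the Lemma wants it on $\partial A$. The cleanest fix is to carry the whole construction out not on $O\cap A'$ but to realize that $u=u_1$ on the annular shell $A'\setminus A$, so that when we extend $u$ by $u_1$ across $\partial A'$ there is genuinely no jump there — hence the Reshetnyak-type limit in Proposition \ref{prop3.2Aimeta} applied with boundary datum $u_1$ on $\partial(O\cap A')$ produces a $\partial A'$-surface term that vanishes (because $(u-u_1)|_{\partial A'}=0$), and the true jump contribution of $Du$ across $\partial A$ is already counted inside $\int_{O\cap A'}W(dDu/d|Du|)\,d|Du|$ via the jump part $D^ju\res(O\cap\partial A)$, which lands in $\int_A W(dD^su/d|D^su|)\,d|D^su| + \int_{O\cap\partial A}W((u-u_1){\bf n})\,d{\cal H}^{N-1}$ after identifying the jump of $u$ across $\partial A$ with $(u-u_1){\bf n}$. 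With this observation the term-by-term comparison closes with only the harmless inequalities $\int_{O\cap A'}(\cdot)\le\int_{O\cap B}(\cdot)$ and $\int_{O\setminus A'}(\cdot)\le\int_{O\setminus A}(\cdot)$, and the proof is complete after the diagonalization in $p$ already built into Proposition \ref{prop3.2Aimeta}.
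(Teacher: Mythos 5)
There is a genuine gap, and it sits exactly where your write-up is quietest: the portion $\partial O\cap A'$ of the boundary of $O\cap A'$. Proposition \ref{prop3.2Aimeta}, applied to the domain $O\cap A'$ with boundary datum $u_1$, produces a sequence whose energies converge to $\int_{O\cap A'}W\bigl(\tfrac{dDu}{d|Du|}\bigr)d|Du|$ \emph{plus} a surface term over the whole of $\partial(O\cap A')$, not only over $O\cap\partial A'$ as in your displayed formula. On $O\cap\partial A'$ the term indeed vanishes, since $u=u_1$ a.e. in $O\setminus \overline A$; but on $\partial O\cap A'$ the trace of $u$ need not coincide with $u_1$, because the lemma only assumes $A\subseteq O$ (not $A\subset\subset O$), so $A$ may touch $\partial O$ on a set of positive ${\cal H}^{N-1}$-measure --- and this is precisely the situation in the subsequent application (Lemma \ref{comelemma22DAT}, where $A=O_t$ shares part of the lateral boundary of the cone $O$). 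Your glued sequence $u_p$ therefore satisfies $u_p=u_1$ in the trace sense on $\partial O\cap A'$, and its limiting energy contains the extra nonnegative term $\int_{\partial O\cap A'}W((u-u_1){\bf n})\,d{\cal H}^{N-1}$, which appears nowhere in the right-hand side of the lemma (whose only surface term is on $O\cap\partial A$) and is not dominated by the available volume slack (take $\nabla u_1=0$ to see this). In other words, by prescribing the datum $u_1$ on all of $\partial(O\cap A')$ you force a boundary layer along $\partial O$ that the lemma deliberately does not charge: the statement only requires $u_p\in W^{1,p}_{\rm loc}(\mathbb R^N)$ and $u_p=u_1$ a.e. in $O\setminus B$, so the admissible sequence is free on $\partial O\cap B$ (global Sobolev regularity can be arranged by an extension across $\partial O$, not by setting $u_p=u_1$ there).

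This is also why the paper does not glue by trace matching on an intermediate surface. Its proof takes an approximation $v_p$ of $u$ on the larger set $O\cap B$ whose bulk energies converge to $\int_{O\cap B}W\bigl(\tfrac{dDu}{d|Du|}\bigr)d|Du|$ (the jump of $u$ across $O\cap\partial A$ being already contained in this measure-theoretic quantity, exactly as you observe at the end), truncates it with $\chi_k$ to gain $L^\infty$ control, and joins it to $u_1$ through a cutoff $\varphi\in C^\infty_0(B)$ with $\varphi\equiv1$ on $A$ together with a convex-combination parameter $t$; the commutator term $(\hat v_{k,p}-u_1)\nabla\varphi$ is then harmless because $u=u_1$ a.e. on the support of $\nabla\varphi$ inside $O$, and the limits $p\to1$, $k\to\infty$, $t\to1$ are diagonalized. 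No Dirichlet condition is imposed along $\partial O\cap B$, so no spurious surface energy on $\partial O$ arises. Your scheme could be repaired in the same spirit: replace the datum-$u_1$ recovery on $O\cap A'$ by a trace-preserving (strict-convergence, Reshetnyak-type) approximation of $u$ on $O\cap A'$ --- its trace on $O\cap\partial A'$ is automatically $u_1$, so your gluing across that surface survives, and $W^{1,p}_{\rm loc}(\mathbb R^N)$ regularity is recovered via an extension operator across $\partial O$ --- but that tool is not Proposition \ref{prop3.2Aimeta} as stated, and without this modification the inequality you obtain is strictly weaker than the lemma whenever the trace of $u$ differs from $u_1$ on $\partial O\cap\overline A$.
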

\begin{proof}[Proof]
Since $O \cap B$ has Lipschitz boundary, by virtue of Proposition \ref{prop3.2Aimeta}, applied to $O \cap B$, and since $u\equiv u_1$ a.e. in $O \setminus A$ we know that  for every $\overline{p} \geq p>1$ there exists $\{v_p\}_p$ with $v_p  \in W^{1,p}_{\rm loc}(\mathbb R^N)$ , such that
$$
\displaystyle{\lim_{p\to 1}\int_{O\cap B}|v_p-u|^{1^\ast}dx =0,}
$$
\noindent and
\begin{equation}\label{22DAT}
\displaystyle{\lim_{p \to 1}\int_{O \cap B} (W(\nabla v_p))^pdx= \int_{O \cap B}W\left(\frac{d D u}{d |D u|}\right)d |D u|.}
 \end{equation}
For every  sequence $p >1$, $k \in \mathbb N$, let $\chi_k:\mathbb R \to \mathbb R$ be a smooth function such that $0 \leq \chi'_k\leq 1$ with 
$$
\chi_k=\left\{
\begin{array}{ll} -(k+1) & \hbox{ if }t \leq -(k+2),\\
\\
t &\hbox{ if } -k \leq t \leq k,\\
\\
k+1 &\hbox{ if }t \geq k+2,
\end{array}
\right.
$$
and set
$$
\begin{array}{ll}
\displaystyle{\hat{v}_{k,p}= u_1 + \chi_k(v_p-u_1),}\\
\displaystyle{\hat{v}_k = u_1+ \chi_k (u-u_1).}
\end{array}
$$
Let $\varphi \in C^\infty_0(B)$ with $\varphi = 1$ in $A$ and define, for $t \in ]0,1[$,
$$
\begin{array}{ll}
\displaystyle{w_{t,k,p}=t^2(2-t)[\varphi \hat{v}_{k,p}+ (1-\varphi)u_1]+ (1-t)(1+t-t^2)u_1},\\
\\
\displaystyle{w_{t,k}=t^2(2-t)[\varphi \hat{v}_k+ (1-\varphi)u_1]+ (1-t)(1+t-t^2)u_1,}\\
\\
\displaystyle{w_t=t^2(2-t)[\varphi u+ (1-\varphi)u_1]+ (1-t)(1+t-t^2)u_1}
\end{array}
$$
Clearly, for every $p >1$, $k \in \mathbb N$, $t \in ]0,1[, w_{t,k,p}\in W^{1,p}_{\rm loc}(\mathbb R^N)$ and $w_{t,k,p}= u_1$ a.e. in $O \setminus B$.
By the convexity of $W$, the convexity and increasing monotonicity of $s \in \mathbb R^+ \to s^p \in \mathbb R^+$, we get
\begin{equation}\label{23DAT}
\begin{array}{ll}
\displaystyle{\int_O W^p(\nabla w_{t,k,p})dx \leq}\\
\\
\displaystyle{t \int_O W^p(t(2-t)(\varphi \nabla \hat{v}_{k,p}+ (1-\varphi )\nabla u_1+ ( \hat{v}_{k,p}- u_1) \nabla \varphi)  dx+}
\\
\\
\displaystyle{(1-t)\int_O W^p((1-t+t^2)\nabla u_1)dx \leq}\\
\\
\displaystyle{t^2(2-t)\int_O W^p(\varphi \nabla \hat{v}_{h,k}+ (1-\varphi) \nabla u_1)dx+}\\
\\
\displaystyle{t(1-t(2-t)) \int_O W^p\left(\frac{t(2-t)}{1-t(2-t)} (\hat{v}_{k,p}- u_1)\nabla \varphi\right)+}\\
\\
\displaystyle{(1-t)\int_O W^p((1+t-t^2) \nabla u_1) dx,}
\end{array}
\end{equation}
for every $p >1$, $ k \in \mathbb N, t \in ]0,1[.$

\noindent The estimate of the first term in the right hand side of \eqref{23DAT}, gives, since $W^p(\cdot)$ is convex,
\begin{equation}\label{24DAT}
\begin{array}{ll}
\displaystyle{\int_O W^p(\varphi \hat{v}_{k,p}+(1-\varphi) \nabla u_1) dx=}\\
\\
\displaystyle{\int_A W^p(\nabla \hat{v}_{k,p})dx + \int_{O \cap (B \setminus A) }W^p(\varphi \nabla \hat{v}_{k,p})+(1-\varphi) \nabla u_1) dx + \int_{O \setminus B}W^p(\nabla u_1) dx \leq}\\
\\
\displaystyle{\int_A W^p(\nabla \hat{v}_{k,p})dx + \int_{O\cap (B\setminus A )} \varphi W^p(\nabla \hat{v}_{k,p}) dx+}\\
\\
\displaystyle{\int_{O \cap (B \setminus A)} (1-\varphi)W^p(\nabla u_1)dx + \int_{O \setminus B} W^p(\nabla u_1) dx\leq }
\\
\\
\displaystyle{\int_{O \cap B} W^p(\nabla \hat{v}_{k,p}) dx+ \int_{O \setminus A}W^p(\nabla u_1)dx =}\\
\\
\displaystyle{\int_{O \cap B \cap \{ |v_p - u_1| \geq k+2\}} W^p(\nabla u_1) dx + \int_{O \cap B \cap \{|v_p -u_1|\leq k\}} W^p(\nabla v_p )dx +}\\
\\
\displaystyle{\int_{O \cap B \cap \{ k < |v_p -u_1|< k+2\}} W^p(\chi'_k (v_p- u_1) \nabla v_p+ (1-\chi'_k(v_p -u_1)) \nabla u_1) dx+}\\
\\
\displaystyle{\int_{O \setminus A} W^p(\nabla u_1) dx \leq}\\
\\
\displaystyle{\int_{O \cap B \cap \{ |v_p - u_1| \geq k+2\}} W^p(\nabla u_1) dx + \int_{O \cap B \cap \{|v_p -u_1|\leq k\}} W^p(\nabla v_p )dx +}\\
\\
\displaystyle{\int_{O \cap B \cap \{ k < |v_p -u_1|< k+2\}}\left[ \chi'_k (v_p-u_1)W^p(\nabla v_p) + (1-\chi'_k(v_p -u_1))W^p(\nabla u_1)\right]dx +}\\
\\
\displaystyle{\int_{O \setminus A} W^p(\nabla u_1) dx \leq}\\
\\
\displaystyle{\int_{O \cap B \cap \{|v_p -u_1|< k+2\}} W^p(\nabla v_p) dx + \int_{O \cap B \cap \{ |v_p -u_1|> k\}}W^p(\nabla u_1)dx + \int_ {O \setminus A} W^p(\nabla u_1) dx},
\end{array}
\end{equation}  
for every $p >1$, $k \in \mathbb N$, $t \in ]0,1[$.
The growth condition on $W$, expressed in \eqref{Wgrowth}, the fact that $u_1  \in W^{1,p}_{\rm loc}(\mathbb R^N)$ entail that
\begin{equation}\label{26DAT}
\displaystyle{\limsup_{ p \to 1} \int_{O \cap B \cap \{ |v_p- u_1|> k\}} W^p(\nabla u_1) dx \leq \int_{O \cap B \cap \{ |u-u_1| \geq k\}} W(\nabla u_1)dx.}
\end{equation}


Again Lebesgue's dominated convergence theorem implies that
\begin{equation}\label{26bisDAT}
\displaystyle{\lim_{p \to 1}\int_{O\setminus A}W^p(\nabla u_1) dx=\int_{O\setminus A}W(\nabla u_1)dx.}
\end{equation}

Consequently by \eqref{24DAT}, \eqref{22DAT}, \eqref{26DAT}, \eqref{26bisDAT}, the fact that $u$  coincides with the Sobolev function $u_1$ a.e. in $O \setminus A$, we obtain
\begin{equation}\label{27DAT}
\begin{array}{ll}
\displaystyle{\limsup_{p \to 1} \int_O W^p(\varphi \nabla \hat{v}_{k,p}+ (1-\varphi)\nabla u_1)dx \leq}\\
\\
\displaystyle{\leq \int_{O \cap B}W\left(\frac{d Du}{d |D u|}\right)d |D u|+}\\
\\
\displaystyle{+\int_{O\cap B \cap \{|u-u_1|\geq k\}} W(\nabla u_1)dx + \int_{O\setminus A}W(\nabla u_1)dx=}
\\
\\
\displaystyle{=\int_{O\cap B} W(\nabla u)dx + \int_A W\left(\frac{d D^s u}{d |D^s u|}\right)d |D^s u| + \int_{O \cap \partial A} W((u-u_1){\bf n})d {\cal H}^{N-1}+}\\
\\
\displaystyle{\int_{O\cap B \cap \{|u-u_1|\geq k\}} W(\nabla u_1) dx + \int_{O\setminus A}W(\nabla u_1)dx,}
\end{array}
\end{equation}
\noindent for every $k \in \mathbb N, t \in ]0,1[$.

Let us fix $k \in \mathbb N$, $t \in ]0,1[$ and observe that $\|\hat{v}_{k,p}-u_1\|_{L^\infty(O\cap B)}\leq k+2$  for every $p >1$. Therefore, the growth condition on $W$ \eqref{Wgrowth}, its convexity and the fact that $\frac{t(2-t)}{1-t(2-t)}(\hat{v}_{k,p}- u_1)\nabla \varphi \in L^\infty(O\cap B)$ converges pointwise a.e. in $O \cap B$ towards $\frac{t(2-t)}{1-t(2-t)}(\hat{v}_k- u_1)\nabla \varphi $, lead us, via Lebesgue's dominated convergence theorem, to get 
\begin{equation}\label{28DAT}
\displaystyle{\lim_{p \to 1} \int_{O\cap B}W^p\left(\frac{t(2-t)}{1-t(2-t)}(\hat{v}_{k,p}- u_1)\nabla \varphi \right)dx= \int_{O\cap B}W\left(\frac{t(2-t)}{1-t(2-t)}(\hat{v}_k- u_1)\nabla \varphi \right)dx}
\end{equation}
for every $k \in \mathbb N$, $t \in ]0,1[$.

Consequently by \eqref{23DAT}, \eqref{27DAT}, \eqref{28DAT}, we obtain
\begin{equation}\nonumber
\begin{array}{ll}
\displaystyle{\limsup_{p\to 1}\int_O W^p(\nabla w_{t,k,p})dx \leq}\\
\\
\displaystyle{t^2(2-t)\left[\int_{O \cap B} W(\nabla u)dx + \int_A W\left(\frac{d D^s u}{d |D^s |u}\right)d |D^s u|+ \right.}\\
\\
\displaystyle{\int_{O\cap \partial A} W((u_1-u){\bf n})d {\cal H}^{N-1}+}\\
\\
\displaystyle{\left. \int_{O \cap B \cap \{|u-u_1|\geq k\}} W(\nabla u_1)dx + \int_{O \setminus A} W(\nabla u_1)dx\right]+}\\
\\
\displaystyle{t(1-t(2-t))\int_O W\left(\frac{t(2-t)}{1-t(2-t)}(\hat{v}_k- u_1)\nabla \varphi \right)dx + (1-t)\int_O W\left((1+t-t^2)\nabla u_1\right) dx }
\end{array}
\end{equation}
for every $k \in \mathbb N, t \in ]0,1[.$

The proof from now on is identical to that of Lemma 2.1 in \cite{DAT}  and we omit the details. We just observe that the positive $1$-homogeneity of $W$ allows us to replace the recession function $W^\infty$ in \cite{DAT} by $W$. 
 
Thus we have that
\begin{equation}\label{213DAT}
\begin{array}{ll}
\displaystyle{\limsup_{t \to 1}\limsup_{k \to + \infty}\limsup_{p \to 1}\int_O W^p(\nabla w_{t,k,p}) dx\leq}\\
\\
\displaystyle{\int_{O \cap B}W(\nabla u)dx + \int_A W\left(\frac{d D^s u}{d |D^s u|}\right)d |D^s u|+}\\
\\
\displaystyle{\int_{O \cap \partial A}W((u_1-u){\bf n})d {\cal H}^{N-1}+ \int_{O \setminus A} W(\nabla u_1)dx. }
\end{array}
\end{equation} 
By \eqref{213DAT} the thesis follows by a standard diagonal argument once observed that
$w_{t,k,p}\to w_{t,k}$ in $L^{1^\ast}(O)$ for every $k \in \mathbb N$, $t \in ]0,1[$ as $p \to 1$, and $w_{t,k} \to w_t$ in $L^{1^\ast}(O)$ for every $t \in ]0,1[$ as $k \to +\infty$ and $w_t \to u$ in $L^{1^\ast}(O)$ as $t \to 1$.

\end{proof}

The result proved below is analogous to \cite[Lemma 2.2]{DAT}.
\begin{Lemma}\label{comelemma22DAT}
Let ${\bar p}>1$, let $O$ be a cone-shaped open set pieceiwise $C^1$, with vertex $x_0$ and basis $S$, $W:\mathbb R^N \to [0,+\infty[$ be convex, positively $1$-homogeneous and verifying \eqref{Wgrowth}, $u_1 \in W^{1,{\bar p}}_{loc}(O)$, and $u \in BV(O)$, then there exists $\{u_p\}_p$, ${\bar p}\geq p >1$, such that $u_p \in W^{1,p}_{\rm loc}(\mathbb R^N)$ and $u_p \equiv u_1$ a.e. in $C_{x_0,S}\setminus O$ for every $p>1$, $u_p \to u$ in $L^{1^\ast}(O)$ and
\begin{equation}\nonumber
\begin{array}{ll}
\displaystyle{\limsup_{p \to 1}\int_O W^p(\nabla u_p)dx \leq}\\
\\
\;\;\,\;\;\;\;\;\;\,\;\;\displaystyle{\int_O W(\nabla u)dx + \int_O W\left(\frac{d Du}{d |D u|}\right)d |D u|+ \int_S W((u-u_1){\bf n})d {\cal H}^{N-1}.}
\end{array}
\end{equation}
\end{Lemma}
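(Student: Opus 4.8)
The plan is to reduce Lemma \ref{comelemma22DAT} to the previously established Lemma \ref{comelemma21DAT} by means of the standard dilation trick associated with cone-shaped sets. Since $O$ is cone-shaped with vertex $x_0$ and basis $S$, for each $t \in ]0,1[$ the dilated set $O_t := x_0 + t(O - x_0)$ satisfies $O_t \subset\subset O$ (by the inclusion property \eqref{212MZ}), and $O \setminus \overline{O_t}$ is nonempty. The idea is to apply Lemma \ref{comelemma21DAT} on $O$ with the choice $A = O$, $B = $ a slightly enlarged neighbourhood of $O$ --- but this does not quite work since we need $A \subseteq O$ and $A \subset\subset B$ with $O \setminus \overline{B} \neq \emptyset$. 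Instead I would proceed the other way: work with the inner approximation. Set $A_t := O_t$ and choose $B_t$ with $A_t \subset\subset B_t \subset\subset O$; then on $O$ apply Lemma \ref{comelemma21DAT} to the rescaled function $u^{(t)}(x) := u\bigl(x_0 + \tfrac{1}{t}(x-x_0)\bigr)$ defined on $O_t$ and extended by $u_1$ on $O \setminus O_t$. As $t \to 1$ the measures $|Du^{(t)}|$, the boundary integrals over $\partial A_t$, and the bulk terms all converge (by the change of variables in the cone together with Reshetnyak continuity, exactly as in \cite[Lemma 2.2]{DAT}) to the quantities appearing on the right-hand side of the claimed inequality, with the boundary term localizing on $S$.

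The key steps, in order, would be: \textbf{(1)} Record the scaling structure of the cone: the map $x \mapsto x_0 + t(x-x_0)$ sends $O$ to $O_t$ and $S$ to $S_t := x_0 + t(S-x_0)$, and push forward $u$ and $u_1$ accordingly, noting that $\partial A_t \cap O$ is, up to the scaling, a copy of $S$. \textbf{(2)} For each fixed $t$, verify that the hypotheses of Lemma \ref{comelemma21DAT} hold for the pair $(A_t, B_t)$ inside $O$ (piecewise $C^1$ regularity of $O \cap B_t$ follows from that of $O$ and the smoothness of the dilation; the nonemptiness of $O \setminus \overline{B_t}$ is immediate for $t$ close to $1$), obtaining sequences $\{u_{p}^{(t)}\}_p$ with $u_p^{(t)} \equiv u_1$ on $O \setminus B_t \supseteq C_{x_0,S}\setminus O$ for $t$ near $1$ and with the limsup bound
$$
\limsup_{p\to 1}\int_O W^p(\nabla u_p^{(t)})\,dx \leq \int_{O\cap B_t} W(\nabla u)\,dx + \int_{A_t} W\!\left(\frac{dD^s u}{d|D^s u|}\right) d|D^s u| + \int_{O\cap\partial A_t} W((u-u_1){\bf n})\,d\mathcal H^{N-1} + \int_{O\setminus A_t} W(\nabla u_1)\,dx.
$$
\textbf{(3)} Let $t \to 1$: the first term tends to $\int_O W(\nabla u)\,dx$, the second to $\int_O W(dD^su/d|D^su|)\,d|D^su|$, the last to $0$ since $\mathcal L^N(O \setminus A_t) \to 0$ and $|\nabla u_1|^{\bar p}$ is locally integrable, and the boundary term $\int_{O \cap \partial A_t} W((u-u_1){\bf n})\,d\mathcal H^{N-1}$ converges to $\int_S W((u-u_1){\bf n})\,d\mathcal H^{N-1}$ using the trace continuity for $BV$ functions along the shrinking cone sections (this is exactly the point where the trace/rescaling estimate from subsection \ref{gammaconvergence}, namely the averaged $L^{1^\ast}$ trace limit, enters). \textbf{(4)} Conclude by a diagonal argument over $t \to 1$ and $p \to 1$, using that $u_p^{(t)} \to u^{(t)} \to u$ in $L^{1^\ast}(O)$ and that the total-variation-plus-bulk sum $\int_O W(\nabla u)dx + \int_O W(dDu/d|Du|)d|Du|$ collects the absolutely continuous part twice exactly as stated.

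The main obstacle I anticipate is step \textbf{(3)}, specifically controlling the boundary integral over the moving hypersurfaces $O \cap \partial A_t = S_t$ as $t \uparrow 1$: one needs that $\int_{S_t} W((u - u_1){\bf n})\,d\mathcal H^{N-1} \to \int_S W((u-u_1){\bf n})\,d\mathcal H^{N-1}$, which requires both the continuity of the (interior) trace of $u \in BV(O)$ along the family of parallel cone-sections and the corresponding continuity for the Sobolev function $u_1$, together with uniform control of the jump $|u^+ - u^-|$ via the estimate $\lim_{r\to 0} r^{-N}\int_{O\cap B_r(x_0)}|u - \gamma_O(u)(x_0)|^{1^\ast}dx = 0$ recalled earlier; this is handled in \cite[Lemma 2.2]{DAT} and, as the authors note, the only modification here is that the positive $1$-homogeneity of $W$ lets one use $W$ in place of the recession function $W^\infty$ throughout. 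A secondary technical point is ensuring the glued functions $u_p^{(t)}$ genuinely lie in $W^{1,p}_{\rm loc}(\mathbb R^N)$ across $\partial B_t$ and $\partial O$, which follows because $u_1 \in W^{1,\bar p}_{\rm loc}$ and the construction in Lemma \ref{comelemma21DAT} already produces functions matching $u_1$ in a neighbourhood of $O \setminus B_t$.
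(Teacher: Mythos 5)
Your strategy is indeed the paper's strategy (dilate toward the vertex of the cone, apply Lemma \ref{comelemma21DAT} with $A$ equal to the shrunk cone, let the dilation parameter go to $1$, diagonalize), but as written the application of Lemma \ref{comelemma21DAT} is set up incorrectly in two places. (i) There is no $B_t$ with $A_t\subset\subset B_t\subset\subset O$: with $A_t=O_t=x_0+t(O-x_0)$, the closure $\overline{O_t}$ contains the vertex $x_0\in\partial O$ as well as the lateral part of $\partial O_t$, which lies on the lateral boundary of $O$, so $\overline{A_t}\not\subset O$ and the required compact containment in a $B_t\subset\subset O$ is impossible. Lemma \ref{comelemma21DAT} does not ask for $B\subset O$; the paper takes $B$ to be a bounded open set that sticks out of $O$ near the vertex and the lateral boundary, subject only to $O_t\subset\subset B$, $O\setminus\overline B\neq\emptyset$ and $O\cap B$ piecewise $C^1$. (ii) The inclusion you invoke, $O\setminus B_t\supseteq C_{x_0,S}\setminus O$, is false: the left-hand side is contained in $O$ while the right-hand side is disjoint from $O$. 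Consequently the property $u_p\equiv u_1$ a.e.\ in $C_{x_0,S}\setminus O$, which is part of the statement, does not follow from Lemma \ref{comelemma21DAT} (which only gives $u_p=u_1$ a.e.\ in $O\setminus B$); it has to be imposed separately, as the paper does by observing that the approximants may be taken equal to $u_1$ a.e.\ in $C_{x_0,S}\setminus O$, which is harmless because the extended competitor already coincides with $u_1$ across the lateral boundary of the cone, so no new jump is created and the $W^{1,p}_{\rm loc}(\mathbb R^N)$ regularity is preserved.

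Two lesser remarks. The paper, following \cite{DAT}, does not dilate $u$ directly but works with $u_{t,\tau}=u_1+\frac{\tau}{t}(u-u_1)(x_0+t(\cdot-x_0))$: subtracting $u_1$ makes the formula globally valid (it automatically equals $u_1$ outside the shrunk cone, since $u$ was extended by $u_1$ outside $O$), and the extra parameter $\tau$ is what allows $\nabla u_{t,\tau}$ to be treated as a convex combination involving $\nabla u_1$, so that the limits first in $t$ and then in $\tau$ leading from \eqref{214DAT} to \eqref{222DAT} can be carried out via convexity and positive $1$-homogeneity; since you delegate exactly this bookkeeping to \cite{DAT}, your step (3) is not self-contained. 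Finally, the ``main obstacle'' you identify, namely continuity of the trace of $u$ along the moving sections $S_t$, does not actually arise in the dilation approach: the interior trace of the rescaled function on $S_t$ is just the dilated trace of $u$ on $S$, and only the traces of the Sobolev function $u_1$ move with $t$, which is controlled by its $W^{1,{\bar p}}_{\rm loc}$ regularity together with the Lipschitz continuity of $W$ coming from convexity and \eqref{Wgrowth}.
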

\begin{proof}[Proof]

The proof is very similar to that of Lemma 2.2 in \cite{DAT}. We do not propose it in its entirety but we just outline the main steps and differences. 

First we extend $u \in C_{x_0,S}$ by defining $u = u_1$ a.e. in $C_{x_0, S}\setminus O$. Let $t \in ]1,+\infty[$ and $\tau \in ]0,1[$, set $O_t=x_0+ \frac{(O- x_0)}{t}$ and define $u_{t, \tau}= u_1 + \frac{\tau}{t}(u-u_1)(x_0+ t (\cdot- x_0))$. We have that if $x \not \in O_t$, then $x_0+ t(x-x_0) \not \in O$, hence being $u=u_1$ a.e. in $C_{x_0,S} \setminus O$, it turns out that $u_{t, \tau} \in BV(A)$ for every bounded subset $A$ of $C_{x_0,S}$, $u_{t,\tau}=u_1$ a.e. in $C_{x_0,S}\setminus O_t$ and 
$$
\displaystyle{\nabla u_{t,\tau}(x)= \nabla u_1(x)+ \tau \nabla (u-u_1)(x_0+ t(x-x_0)) \hbox{ for a.e. }x \in C_{x_0, S}.} 
$$ 
Then, instead of invoking \cite[Lemma 2.1]{DAT}, we refer to our Lemma \ref{comelemma21DAT}, which guarantees convergence in $L^{1^\ast}(O)$ of $u_{t,\tau}$ to $u$ first as $t \to 1$ and then $\tau \to 1$.
  
Indeed, by Lemma \ref{comelemma21DAT}, applied to $A= O_t$ and $B$ a bounded open set satisfying $O_t \subset \subset B$, $O \setminus {\overline B} \not = \emptyset$ and such that $O \cap B$ is piecewise $C^1$, there exists $\{u_p^{t,\tau}\}_{p, t,\tau}$ such that $u_p^{t,\tau} \in W^{1,p}_{\rm loc}(\mathbb R^N)$ and verifies $u_p^{t,\tau} \to u_{t,\tau}$ in $L^{1^\ast}(O)$, $u_p^{t,\tau}= u_1$ a.e. in $O \setminus B$ for every $p >1$, and 
\begin{equation}\label{214DAT}
\begin{array}{ll}
\displaystyle{\limsup_{p \to 1}\int_O W^p(\nabla u_p^{t, \tau})dx \leq}
\\
\\
\displaystyle{\leq \int_{O \cap B}W(\nabla u_{t,\tau}) dx + \int_{O_t}W\left(\frac{d D^s u_{t,\tau}}{d |D^s u_{t,\tau}|}\right)d |D^s u_{t,\tau}|+}
\\
\\
\displaystyle{\int_{O \cap \partial O_t} W((u_1- u_{t,\tau}){\bf n})d{\cal H}^{N-1}+ \int_{O \setminus O_t}W(\nabla u_1)dx.}
\end{array}
\end{equation}
Observe also that it is not restrictive to assume that $u_p^{t,\tau}= u_1$  a.e. in $C_{x_0,S} \setminus O$ for every $p >1$.

Then, exploiting the convexity and the positive $1$-homogeneity of $W$ and the change of variable $y= x_0 + t(x-x_0)$ the proof develops along the same lines of \cite[Lemma 2.1]{DAT}, thus we omit it.

In conclusion, 
taking first the limit as $t \to 1$ and then letting $\tau$ go to $1$, we have, 
\begin{equation}\label{222DAT}
\begin{array}{ll}
\displaystyle{\limsup_{\tau \to 1}\limsup_{t \to 1}\left\{\int_{O \cap B} W(\nabla u_{t, \tau})dx + \int_{O_t}W\left(\frac{d D^s u_{t,\tau}}{d |D^s u_{t,\tau}|}\right)d|D^s u_{t,\tau}|+\right.}\\
\\
\displaystyle{\left. \int_{O \cap \partial O_t}W((u_1-u){\bf n}) d {\cal H}^{N-1} + \int_{O \setminus O_t}W(\nabla u_1)dx\right\}\leq}\\
\\
\displaystyle{\leq \int_O W(\nabla u)dx + \int_O W\left(\frac{d D^s u}{d |D^s u|}\right)d |D^s u|+ \int_S W((u_1-u){\bf n})d {\cal H}^{N-1}.}
\end{array}
\end{equation}
By \eqref{214DAT}, \eqref{222DAT} and a diagonal argument, the thesis follows.
\end{proof}

The following result, developped in the same spirit of \cite[Lemma 3.2]{MZ} will be exploited in the sequel. 
\begin{Lemma}\label{lemma32MZ}
Let $O$ be a cone-shaped open set in $\mathbb R^N$ pieceiwise $C^1$ with vertex $x_0$ and basis $S$, satisfying \eqref{212MZ}. Let $I$ be another bounded open set in $\mathbb R^N$ piecewise $C^1$ such that $\Gamma:= S \cap I \not= \emptyset$, and assume that ${\cal H}^{N-1}({\bar \Gamma}\setminus \Gamma)= 0$. Let $W:\mathbb R^N \to [0. +\infty[$ be a convex, positively $1$-homogeneous function satisfying \eqref{Wgrowth}. Let $u \in BV(O)$ and $u_1 \in W^{1,{\bar p}}_{\rm loc}(\mathbb R^N)$ for some ${\bar p}>1$. Then there exists a sequence  $\{u_p\}_p$ such that for every $1<p<{\bar p}$, $u_p \in W^{1,p}_{\rm loc}(\mathbb R^N)$ and $u_p =u_1$ on $C_{x_0, \Gamma}\setminus O$, $u_p \to u_1$ in $L^{1^\ast}(O)$ and
\begin{equation}\nonumber
\displaystyle{\limsup_{p \to 1}\int_O W^p(\nabla u_p)dx \leq \int_O W\left(\frac{d D u}{d |Du|}\right)d |D u|+ \int_{\Gamma}W((u_1-u){\bf n})d {\cal H}^{N-1}.}
\end{equation}
\end{Lemma}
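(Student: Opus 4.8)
The plan is to reduce the statement to Lemma \ref{comelemma22DAT} by a careful cutting-and-pasting argument adapted to the geometry of the cone $C_{x_0,S}$ intersected with the auxiliary set $I$. The key observation is that $O$ is cone-shaped with vertex $x_0$ and basis $S$, and $\Gamma = S \cap I$ is a nonempty open piece of $S$ with $\mathcal{H}^{N-1}(\overline{\Gamma}\setminus\Gamma)=0$; so the ``sub-cone'' $C_{x_0,\Gamma}$ is itself cone-shaped, and we want to modify $u$ only near the part of the boundary lying on $\Gamma$, matching the Sobolev datum $u_1$ there, while leaving the values on $C_{x_0,\Gamma}\setminus O$ frozen equal to $u_1$. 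First I would extend $u$ by setting $u=u_1$ a.e.\ on $C_{x_0,S}\setminus O$, so that $u\in BV_{\mathrm{loc}}$ near the relevant region, using that $u_1\in W^{1,\bar p}_{\mathrm{loc}}(\mathbb R^N)$ and the piecewise $C^1$ regularity of $O$ (and of $O\cap I$) to guarantee no spurious jump is created except possibly the one already present in the trace of $u$ on $S$.

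Next I would apply Lemma \ref{comelemma22DAT} to the cone-shaped set $O\cap C_{x_0,\Gamma}$ (or, more precisely, to an appropriate cone-shaped open set having $\Gamma$-related portion of $S$ as its basis), with the same $W$, the same $u_1$, and the (extended) $u$ restricted there. That lemma produces a family $\{u_p\}_p$, $u_p\in W^{1,p}_{\mathrm{loc}}(\mathbb R^N)$, with $u_p\equiv u_1$ a.e.\ in $C_{x_0,\Gamma}\setminus O$, $u_p\to u$ in $L^{1^\ast}$, and the estimate
$$
\limsup_{p\to 1}\int_O W^p(\nabla u_p)\,dx \leq \int_O W(\nabla u)\,dx + \int_O W\!\left(\frac{dD^su}{d|D^su|}\right)d|D^su| + \int_{\Gamma} W((u-u_1){\bf n})\,d\mathcal{H}^{N-1}.
$$
The remaining task is to absorb the absolutely continuous bulk term $\int_O W(\nabla u)\,dx$ together with the Cantor/jump interior term into $\int_O W(dDu/d|Du|)\,d|Du|$: since $Du=\nabla u\,\mathcal{L}^N + D^su$ and $W$ is positively $1$-homogeneous and convex, the Radon--Nikod\'ym decomposition gives $\int_O W(\nabla u)\,dx + \int_O W(dD^su/d|D^su|)\,d|D^su| = \int_O W(dDu/d|Du|)\,d|Du|$, which is exactly the right-hand side claimed in the Lemma. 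Here I should note that the statement to be proved already writes the bound in the form $\int_O W(dDu/d|Du|)\,d|Du| + \int_\Gamma W((u_1-u){\bf n})\,d\mathcal{H}^{N-1}$, so the final cleanup is just this identification plus the symmetry $W((u-u_1){\bf n})=W((u_1-u)(-{\bf n}))$ with the correct orientation of the normal on $\Gamma$.

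The main obstacle I expect is the geometric bookkeeping: Lemma \ref{comelemma22DAT} is stated for a genuinely cone-shaped set with basis $S$, whereas here we only want to match $u_1$ on the sub-portion $\Gamma=S\cap I$ of the basis, keeping the values frozen outside $O$ only on $C_{x_0,\Gamma}$ rather than on all of $C_{x_0,S}$. Reconciling these requires choosing the right intermediate cone-shaped domain and verifying that condition \eqref{212MZ} together with $\mathcal{H}^{N-1}(\overline\Gamma\setminus\Gamma)=0$ guarantees the construction of Lemma \ref{comelemma22DAT} does not ``spill'' contributions onto $S\setminus\Gamma$; the piecewise $C^1$ hypothesis on both $O$ and $I$ is what makes the relevant traces well-defined and the boundary integral over $\Gamma$ meaningful. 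Once the domain is set up correctly, the rest follows by a diagonal argument exactly as in \cite[Lemma 3.2]{MZ}, and I would omit those now-routine details, remarking only that the positive $1$-homogeneity of $W$ lets us use $W$ directly in place of the recession function $W^\infty$ appearing in the reference.
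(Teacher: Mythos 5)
Your reduction to Lemma \ref{comelemma22DAT} does not, as stated, prove the lemma: if you apply Lemma \ref{comelemma22DAT} to the sub-cone $O\cap C_{x_0,\Gamma}$ (vertex $x_0$, basis $\Gamma$), what you obtain is a family $\{u_p\}$ with $u_p=u_1$ a.e.\ in $C_{x_0,\Gamma}\setminus O$, convergence to $u$ in $L^{1^\ast}(O\cap C_{x_0,\Gamma})$, and an energy estimate for $\limsup_{p\to 1}\int_{O\cap C_{x_0,\Gamma}}W^p(\nabla u_p)\,dx$ only. The inequality you then write with $\int_O$ on the left-hand side is not what that lemma gives: on $O\setminus C_{x_0,\Gamma}$ the functions produced by Lemma \ref{comelemma22DAT} carry no energy bound and no reason to converge to $u$, so both the convergence $u_p\to u$ in $L^{1^\ast}(O)$ and the bound by $\int_O W\left(\frac{dDu}{d|Du|}\right)d|Du|+\int_\Gamma W((u_1-u){\bf n})\,d\mathcal{H}^{N-1}$ are unproved. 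Nor can you repair this by simply juxtaposing a separate approximation of $u$ on $O\setminus C_{x_0,\Gamma}$: the two pieces need not match across the lateral interface $O\cap\partial C_{x_0,\Gamma}$, so either the glued function fails to be $W^{1,p}$ or a cutoff transition layer must be estimated — and that transition estimate is precisely the substance you dismiss as ``routine details''.

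Concretely, the paper's argument needs three ingredients that are absent from your proposal: (i) a second sequence $\{v_p\}$, built as in Proposition \ref{prop3.2Aimeta}, approximating $u$ in $O$ away from the sub-cone with $\lim_{p\to1}\int_{O\setminus\bar A}W^p(\nabla v_p)\,dx=\int_{O\setminus\bar A}W\left(\frac{dDu}{d|Du|}\right)d|Du|$ and no boundary term; (ii) a cutoff gluing $w^\e_p=(1-\varphi^\e)u^\e_p+\varphi^\e v_p$ in which the cross term involving $\nabla\varphi^\e\,(v_p-u^\e_p)$ in the transition region is controlled via convexity, \eqref{Wgrowth}, the local Lipschitz continuity of $W^p$ and H\"older's inequality, using the $L^{1^\ast}$ convergence of both sequences to $u$; and (iii) the use of a decreasing family of enlarged cones $A^\e$ with bases $\Gamma^\e\supset\bar\Gamma$ shrinking to $\bar\Gamma$, so that Lemma \ref{comelemma22DAT} is applied on $A^\e$ (not on the cone over $\Gamma$ itself, where the cutoff would dump energy on $\partial\Gamma$ inside $S$), and the spurious boundary contribution $\int_{\Gamma^\e\setminus\Gamma}W((u_1-u){\bf n})\,d\mathcal{H}^{N-1}$ is removed in the limit thanks to the hypothesis $\mathcal{H}^{N-1}(\bar\Gamma\setminus\Gamma)=0$. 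Your observation that positive $1$-homogeneity lets one merge $\int_O W(\nabla u)\,dx$ with the singular part into $\int_O W\left(\frac{dDu}{d|Du|}\right)d|Du|$ is correct but marginal; the actual work of the proof is the two-sequence gluing and the double limit in $\e$ and $p$, which your proposal leaves out.
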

\begin{proof}[Proof.] Without loss of generality we may assume that the right hand side of \eqref{lemma32MZ} is finite.

Let $\{B^\e\}_{\e >0}$ be a decreasing family of open subsets of $O$ with Lipschitz boundary such that, setting $\Gamma^\e= B^\e \cap S$, one has,
\begin{itemize}
\item[i)] $\Gamma^\e \supset {\bar \Gamma}$;
\item[ii)] $\cap_{\e >0} \Gamma^\e = {\bar \Gamma}$.
\end{itemize}
Let $A$ be a cone-shaped set of basis $\Gamma$ and vertex $x_A$ with $x_A  \in {\rm int}(O)$, and for every $\e$ denote by $A^\e$ a cone-shaped set of basis $\Gamma^\e$ and vertex $x_\e$ , with $x_\e \in {\rm int}(O\setminus A)$. Assume that $\{A^\e\}_{\e>0}$ is a decreasing family of sets such that $\cap_{\e >0}A^\e= {\bar A}$. \eqref{Wgrowth} allows us to apply Lemma \ref{comelemma22DAT}. Hence there exists a sequence $\{u_p^\e\}_p$ with $u_p^\e \in W^{1,p}_{\rm loc}(\mathbb R^N)$, $u_p^\e = u_1$ in $C_{x_\e, \Gamma^\e}\setminus A^\e$, such that $u_p^\e \to u$ in $L^{1^\ast}(A^\e)$ and 
\begin{equation}\label{34MZ}
\displaystyle{\limsup_{p \to 1}\int_{A^\e}W^p(\nabla u_p^\e)dx \leq \int_{A^\e} W\left(\frac{d D u}{d |D u|}\right)d |D u| +\int_{\Gamma^\e}W((u_1-u){\bf n})d{\cal H}^{N-1}.}
\end{equation}

Moreover an argument analogous to that exploited in Proposition  \ref{prop3.2Aimeta} guarantees that there exists a sequence $\{v_p\}_p$ such that $v_p \in W^{1,p}_{\rm loc}(\mathbb R^N)$ and $\lim_{p \to 1}\int_{O\setminus {\bar A}} |v_p-u|^{1^\ast} dx =0$ and 
\begin{equation}\label{35MZ}
\displaystyle{\lim_{p \to 1}\int_{O\setminus {\bar A}} W^p(\nabla u_p)dx = \int_{O\setminus {\bar A}} W\left(\frac{d Du}{d |D u|}\right)d |Du|.}
\end{equation}
For every $\e>0$, let $0\leq \varphi^\e\leq 1$ be a smooth function such that 
$$
\varphi^\e: x \in O \to \left\{\begin{array}{ll} 0 &\hbox{ if }x \in C_{x_A, \Gamma},\\
1 &\hbox{ if }x \in C_{x_0,S}\setminus C_{x_0, \Gamma^\e},
\end{array}
\right.
$$
and set $w_p^\e=(1-\varphi^\e)u_p^\e + \varphi^\e v_p.$

Let us fix $\e>0$ and observe that
\begin{equation}\label{36MZ}
\begin{array}{ll}
\displaystyle{\limsup_{p\to 1} \int_O W^p(\nabla w_p^\e)dx = \limsup_{p\to 1} \left\{\int_A W^p (\nabla u_p^\e)dx +\right.}\\
\\
\displaystyle{\left. + \int_{A^\e \setminus A} W^p ([(1-\varphi^\e)\nabla u_p^\e+ \varphi^\e \nabla v_p + \nabla \varphi^\e (v_p-u_p^\e)])dx + \int_{O\setminus A^\e}W^p(\nabla v_p)dx\right\}.}
\end{array}
\end{equation}
Next, by exploiting the convexity of $W$ and \eqref{Wgrowth}, we obtain the following inequality due to the local Lipschitz continuity of $W^p$ (the constant $C$ below may vary from line to line, being uniformly bounded in $p$ as $p$ tends to $1$).
$$
\begin{array}{ll}
\displaystyle{ \int_{A^\e \setminus {\bar A}} W^p ([(1-\varphi^\e)\nabla u_p^\e+ \varphi^\e \nabla v_p + \nabla \varphi^\e (v_p-u_p^\e)])dx\leq}\\
\\
\displaystyle{\int_{A^\e \setminus A} W^p ((1-\varphi^\e)\nabla u_p^\e + \varphi^\e \nabla v_p)dx + }\\
\\
\displaystyle{C \int_{A^\e \setminus A}\left(\left|[(1-\varphi^\e)\nabla u_p^\e+ \varphi^\e \nabla v_p + \nabla \varphi^\e (v_p-u_p^\e)]\right|^{p-1} + \left|(1-\varphi^\e)\nabla u_p^\e + \varphi^\e \nabla v_p\right|^{p-1}\right) \left|\nabla \varphi^\e(v_p -u_p^\e) \right|dx .}
\end{array}
$$
By exploiting again the convexity of $W$ and Hoelder inequality we obtain
$$
\begin{array}{ll}
\displaystyle{\int_{A^\e \setminus {\bar A}} W^p ([(1-\varphi^\e)\nabla u_p^\e+ \varphi^\e \nabla v_p + \nabla \varphi^\e (v_p-u_p^\e)])dx\leq}\\
\\
\displaystyle{\int_{A^\e \setminus A} W^p(\nabla u_p^\e)dx + \int_{A^\e \setminus A}W^p (\nabla v_p)dx +}\\
\\
\displaystyle{C \left(\int_{A^\e \setminus A}\left(\left|[(1-\varphi^\e)\nabla u_p^\e+ \varphi^\e \nabla v_p + \nabla \varphi^\e (v_p-u_p^\e)]\right|^{p-1} + \left|(1-\varphi^\e)\nabla u_p^\e + \varphi^\e \nabla v_p\right|^{p-1}\right)^{\frac{p}{p-1}}dx\right)^{\frac{p-1}{p}} \cdot}
\\
\\
\displaystyle{\left(\int_{A^\e \setminus A} \left|\nabla \varphi^\e(v_p -u_p^\e) \right|^p dx\right)^{\frac{1}{p}}.}
\end{array}
$$
Thus, the last inequality and \eqref{36MZ} provide
$$
\begin{array}{ll}
\displaystyle{\limsup_{p \to 1}\int_O W^p(\nabla w_p^\e)dx \leq}\\
\\
\displaystyle{\limsup_{p\to 1}\left[\int_{A^\e}W^p(\nabla u_p^\e)dx + \int_{O \setminus A}W^p (v_p)dx + \right.}\\
\\
\displaystyle{C \left(\int_{A^\e \setminus A}\left(\left|[(1-\varphi^\e)\nabla u_p^\e+ \varphi^\e \nabla v_p + \nabla \varphi^\e (v_p-u_p^\e)]\right|^{p-1} + \left|(1-\varphi^\e)\nabla u_p^\e + \varphi^\e \nabla v_p\right|^{p-1}\right)^{\frac{p}{p-1}}dx\right)^{\frac{p-1}{p}} \cdot}\\
\\
\displaystyle{\left. \left(\int_{A^\e \setminus A} \left|\nabla \varphi^\e(v_p -u_p^\e) \right|^p dx\right)^{\frac{1}{p}}\right].}
\end{array}
$$
Exploiting \eqref{34MZ},\eqref{35MZ}, the bounds on $\int_{A^\e\setminus A}|\nabla u^\e_p|^p dx $ and $\int_{A^\e \setminus A}|\nabla v_p|^p dx$ following from \eqref{34MZ} and \eqref{35MZ} and the growth from below of $W$ in \eqref{Wgrowth}, and since both $u_p^\e$ and $v_p \in W^{1,p}_{\rm loc}(\mathbb R^N)$ converge to $u$ in $L^{1^\ast}(A^\e)$ and $L^{1^\ast}(O\setminus A)$ respectively, we can conclude, passing to the limit  as $p \to 1$, that 
$$
\displaystyle{\lim_{p \to 1}\int_O \left|w_p^\e- u\right|^{1^\ast}dx = 0,}
$$ 
\noindent obtaining also
\begin{equation}\nonumber
\begin{array}{ll}
\displaystyle{\limsup_{p\to 1}\int_O W^p (\nabla w_p^\e)dx \leq}\\
\\
\displaystyle{\int_O W\left(\frac{d D u}{d |Du|}\right)d|Du|+ \int_{\Gamma^\e}W((u_1-u){\bf n})d{\cal H}^{N-1}+}\\
\\
\displaystyle{\int_{A^\e\setminus A}W(\nabla u)dx + \int_{A^\e \setminus {\bar A}} W\left(\frac{d D^s u}{d|D^s u|}\right)d |D^s u|.}
\end{array}
\end{equation}
We also observe that
\begin{equation}\nonumber
\begin{array}{ll}
\displaystyle{\int_{\Gamma^\e} W((u_1-u){\bf n})d {\cal H}^{N-1}= \int_{\Gamma} W((u_1-u){\bf n})d {\cal H}^{N-1}+}\\
\\
\displaystyle{\int_{\Gamma^\e \setminus \Gamma}W((u_1-u){\bf n})d {\cal H}^{N-1},}
\end{array}
\end{equation}
where this latter term is finite as a consequence  of \eqref{Wgrowth}. Then the thesis follows exploiting again the growth condition and the fact that ${\cal H}^{N-1}({\bar \Gamma}\setminus \Gamma)=0$ as $A^\e$ shrinks to $A$.
\end{proof}

The following result, analogous to \cite[Lemma 3.3]{MZ}, allows us to obtain the upper bound inequality for the desired $\Gamma$-convergence.

\begin{Lemma}\label{lemma33MZ}
Let $\omega $ be a bounded open set piecewise $C^1$ and let $\Omega := \omega \times \left(-\frac{1}{2}, \frac{1}{2}\right)$. Let $W:\mathbb R^3 \to [0,+\infty[$ be a convex, positively $1$-homogeneous function verifying \eqref{Wgrowth}. Let $u \in BV(\omega)$ and $u_0 \in X(\partial \omega)= W^{\frac{{\bar p }-1}{\bar p}, {\bar p}}(\partial \omega)$ for some $1<{\bar p}$. Then there exists a sequence $\{u_p\}_p$ such that $u_p \in W^{1,p}_{u_0, {\rm lat}}\left(\omega \times \left(-\frac{1}{2},\frac{1}{2}\right)\right)$, with $1<p <{\bar p}$ such that 
\begin{equation}\nonumber
\displaystyle{\lim_{p \to 1} \int_\Omega |u_p- u|^{1^\ast}dx = 0},
\end{equation}
and 
\begin{equation}\nonumber
\displaystyle{\lim_{p \to 1} \int_\Omega W^p(\nabla u_p)dx \leq \int_\O W\left(\frac{d D u}{d |D u|}\right)d |D u| + \int_{\partial \omega \times \left(-\frac{1}{2},\frac{1}{2}\right)} W((u_0 - u)\nu)d {\cal H}^2.}
\end{equation}
\end{Lemma}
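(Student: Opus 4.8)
The plan is to reduce the statement about the cylinder $\Omega = \omega \times \left(-\frac12,\frac12\right)$ to a finite family of local statements on cone-shaped pieces, each of which is handled by Lemma \ref{lemma32MZ}, and then to glue the resulting recovery sequences together with a partition of unity. First I would extend $u \in BV(\omega)$ to a $BV(\mathbb R^2)$ function (hence, since the construction is $x_3$-independent, to $BV(\mathbb R^3)$) by setting $u = u_0$ outside $\omega$, using the regularity of $\partial\omega$, the extension bound \eqref{134Z} and Proposition \ref{propZa}; note that because $u$ does not depend on $x_3$, we have $D_3 u = 0$, so $W\left(\frac{dDu}{d|Du|}\right)d|Du|$ over $\Omega$ equals (after integrating out $x_3$) the corresponding expression over $\omega$, and likewise the boundary term on $\partial\omega\times\left(-\frac12,\frac12\right)$ reduces to the one on $\partial\omega$. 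The key geometric point is that since $\omega$ is piecewise $C^1$, each boundary point of $\partial\omega$ has a neighbourhood in which $\partial\omega$ looks (up to bi-Lipschitz change of coordinates) like the graph/basis of a cone, so $\partial\omega\times\left(-\frac12,\frac12\right)$ can be covered by finitely many open sets $I_j$ such that $\Omega\cap I_j$ is, in suitable coordinates, a cone-shaped piecewise-$C^1$ set with basis $\Gamma_j := S_j\cap I_j$ a relatively open subset of $\partial\omega\times\left(-\frac12,\frac12\right)$ with ${\cal H}^2(\overline{\Gamma_j}\setminus\Gamma_j)=0$.

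Next I would run Lemma \ref{lemma32MZ} on each cone-shaped piece $O_j := \Omega\cap I_j$ with the Sobolev function $u_1 = u_0$ (viewed in $W^{1,\bar p}_{\rm loc}(\mathbb R^3)$) and the given $u\in BV$, obtaining for each $j$ a sequence $\{u_p^j\}_p$ with $u_p^j\in W^{1,p}_{\rm loc}(\mathbb R^3)$, $u_p^j = u_0$ on $C_{x_0^j,\Gamma_j}\setminus O_j$, $u_p^j\to u$ in $L^{1^\ast}(O_j)$, and
$$
\limsup_{p\to1}\int_{O_j}W^p(\nabla u_p^j)\,dx \le \int_{O_j}W\left(\tfrac{dDu}{d|Du|}\right)d|Du| + \int_{\Gamma_j}W((u_0-u){\bf n})\,d{\cal H}^2 .
$$
On an interior piece $O_0\subset\subset\Omega$ (away from the lateral boundary), I would instead use the interior recovery-sequence argument in the style of Proposition \ref{prop3.2Aimeta} / \cite[Proposition 2]{D2} to get $\{u_p^0\}_p$ with $\int_{O_0}W^p(\nabla u_p^0)\,dx\to\int_{O_0}W\left(\frac{dDu}{d|Du|}\right)d|Du|$ and $u_p^0\to u$ in $L^{1^\ast}(O_0)$. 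Then, using a partition of unity $\{\theta_j\}$ subordinate to $\{I_j\}\cup\{O_0\}$, set $u_p := \sum_j \theta_j u_p^j$. Because each $u_p^j$ equals $u_0$ on the part of $\partial\omega\times\left(-\frac12,\frac12\right)$ outside $\Gamma_j$ and equals something converging to $u$ inside, and because the $\theta_j$ sum to $1$, the patched function $u_p$ lies in $W^{1,p}_{u_0,{\rm lat}}(\Omega)$; the cross-terms $\nabla\theta_j\,(u_p^j - u_p^k)$ are controlled exactly as in the proof of Lemma \ref{lemma32MZ} — on the overlap regions both $u_p^j$ and $u_p^k$ converge to the same limit $u$ in $L^{1^\ast}$, so a Hölder estimate using the growth \eqref{Wgrowth} kills these terms in the limit $p\to1$. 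Summing the local estimates and using that the $\Gamma_j$ essentially tile $\partial\omega\times\left(-\frac12,\frac12\right)$ (overlaps having ${\cal H}^2$-measure that can be taken arbitrarily small, or handled by a further diagonalization refining the cover) yields
$$
\limsup_{p\to1}\int_\Omega W^p(\nabla u_p)\,dx \le \int_\Omega W\left(\tfrac{dDu}{d|Du|}\right)d|Du| + \int_{\partial\omega\times\left(-\frac12,\frac12\right)}W((u_0-u)\nu)\,d{\cal H}^2 ,
$$
and $u_p\to u$ in $L^{1^\ast}(\Omega)$ follows from the corresponding local convergences and the boundedness of the $\theta_j$.

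The main obstacle I anticipate is the gluing step: ensuring that the finitely many cone-shaped pieces can be chosen so that their bases $\Gamma_j$ cover $\partial\omega\times\left(-\frac12,\frac12\right)$ up to an ${\cal H}^2$-null set while keeping the overlaps harmless, and that the patched $u_p$ genuinely retains the exact lateral boundary datum $u_0$ (not merely approximately). This is where the piecewise-$C^1$ hypothesis on $\omega$ is really used — at the finitely many "corner" curves of $\partial\omega$ one must arrange the cones' vertices and the cutoff functions so that no spurious surface energy is created along $\overline{\Gamma_j}\setminus\Gamma_j$; the condition ${\cal H}^{N-1}(\overline{\Gamma}\setminus\Gamma)=0$ built into Lemma \ref{lemma32MZ} is exactly what makes this bookkeeping work. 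Everything else — the extension of $u$, the reduction of the $x_3$-independent energy, the equi-integrability needed to pass $W^p\to W$ — is routine and proceeds as in Proposition \ref{prop3.2Aimeta} and \cite[Lemma 3.3]{MZ}.
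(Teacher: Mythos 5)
Your strategy is essentially the one the paper itself adopts (its proof is only sketched, deferring to \cite[Lemma 3.3]{MZ}): recovery sequences from Lemma \ref{lemma32MZ} near the lateral (Dirichlet) boundary, a free interior-type approximation where no datum is imposed, glued by a partition of unity, with the cross terms $\nabla\theta_j\,(u_p^j-u_p^k)$ absorbed via the local Lipschitz continuity of $W^p$, H\"older's inequality and the $L^{1^\ast}$ convergence of the local sequences --- exactly the mechanism the paper points to. Two small repairs are needed in your write-up: the piece carrying the free approximation cannot be taken $O_0\subset\subset\Omega$, since then your cover misses neighbourhoods of the bases $\omega\times\{\pm\frac{1}{2}\}$ and the partition of unity cannot sum to $1$ there; it must be allowed to reach the bases, which is harmless because no trace is prescribed on them. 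Moreover, Lemma \ref{lemma32MZ} is stated for genuine cone-shaped sets and $\int W^p(\nabla u)\,dx$ is not invariant under bi-Lipschitz changes of variables, so instead of ``cone up to coordinates'' you should take actual cone-shaped subsets of $\Omega$ with vertices in $\Omega$ and bases covering the lateral boundary (available since the cylinder over a piecewise $C^1$ cross-section is locally star-shaped near its lateral surface); with these adjustments your trace bookkeeping and the small-overlap/diagonalization step go through as you describe.
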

\begin{proof}[Proof.]
The proof develops along the lines of \cite[Lemma 3.3]{MZ} and we refer to it for the details. We point out only the main differences: here we exploit the positive $1$-homogeneity of $W$ and the local Lipschitz continuity of $W^p$.
  
We also emphasize that the arguments essentially rely on the application of a partition of unity to glue the recovery sequences (in $L^{1^\ast}(\Omega)$ and not just in $L^1(\Omega)$ as in \cite{MZ})  for the Neumann and Dirichlet parts of $\Omega$, i. e. `lateral boundary' and  `bases' of the domain.

\end{proof}

\begin{Theorem}\label{corollary33AGZAimeta}
Let $\{F_{p,\e}\}_p$ be the functionals introduced in \eqref{Fpe}, then $\{F_{p,\e}\}_p$ $\Gamma$-converges as $p \to 1$, with respect to the $L^1(\Omega)$ strong topology to $F_{1,\e}$.
\end{Theorem}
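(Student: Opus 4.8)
The plan is to establish the two $\Gamma$-convergence inequalities separately, exactly mirroring the structure of the proof of Theorem \ref{Gammap0to10} but now in the three-dimensional cylinder $\Omega = \omega \times \left(-\frac12,\frac12\right)$ with the anisotropic density $W_\e(\xi_\alpha|\xi_3) = \left|\xi_\alpha\big|\frac1\e \xi_3\right|$, which is convex, positively $1$-homogeneous, and satisfies the two-sided bound \eqref{Wgrowth} with a constant depending on $\e$ (but $\e>0$ is fixed here, so this is harmless). First I would record that $F_{1,\e}$ in \eqref{F1e} is precisely the relaxation with respect to $L^1(\Omega)$-strong convergence of $G_{1,\e}$ in \eqref{G1e}, as already noted via \cite[Theorem 3.4]{MZ}, so the target functional is the natural one.

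For the $\Gamma$-liminf inequality, I would take any sequence $\{u_p\}_p$ with $u_p \in W^{1,p}_{u_0,\mathrm{lat}}(\Omega)$ converging to $u$ in $L^1(\Omega)$; the case $\liminf_{p\to1}F_{p,\e}(u_p)=+\infty$ is trivial, so assume an equibounded energy sequence. Then Hölder's inequality on $\Omega$ gives $\left|D_\alpha u_p\big|\frac1\e D_3 u_p\right|(\Omega) \le \left(\int_\Omega W_\e^p(\nabla u_p)\,dx\right)^{1/p}|\Omega|^{1-1/p} \le C$, so up to a subsequence $u_p \weakstar u$ in $BV(\Omega)$ (using $u_p = u_0$ on the lateral boundary together with Poincaré, extending as $\widetilde{u_0}$ outside exactly as in Theorem \ref{Gammap0to10}). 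This is precisely the content of Lemma \ref{prop31MZ}, which already states that
\[
\int_\Omega W_\e\!\left(\frac{dDu}{d|Du|}\right)d|Du| + \int_{\partial\omega\times\left(-\frac12,\frac12\right)} W_\e((u_0-u)\nu)\,d{\cal H}^2 \le \liminf_{p\to1}\int_\Omega W_\e^p(\nabla u_p)\,dx,
\]
so the lower bound is immediate from that lemma.

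For the $\Gamma$-limsup inequality, given $u \in BV(\Omega)$ I would apply Lemma \ref{lemma33MZ} with $W = W_\e$, which directly produces a recovery sequence $\{u_p\}_p$ with $u_p \in W^{1,p}_{u_0,\mathrm{lat}}(\Omega)$, $u_p \to u$ in $L^{1^\ast}(\Omega)$ (hence in $L^1(\Omega)$ since $\Omega$ is bounded), and
\[
\lim_{p\to1}\int_\Omega W_\e^p(\nabla u_p)\,dx \le \int_\Omega W_\e\!\left(\frac{dDu}{d|Du|}\right)d|Du| + \int_{\partial\omega\times\left(-\frac12,\frac12\right)} W_\e((u_0-u)\nu)\,d{\cal H}^2 = I_{1,\e}(u).
\]
Taking $p$-th roots on both sides (and using $|\Omega|<\infty$ together with $\left(\int W_\e^p\right)^{1/p} \to \int W_\e$-type comparison via Hölder) converts this into the statement for $F_{p,\e} = \left(\int_\Omega W_\e^p(\nabla\cdot)\right)^{1/p}$ versus $I_{1,\e}$; since on the Sobolev class $F_{1,\e}$ coincides with $I_{1,\e}$ restricted there and $I_{1,\e}$ is its $L^1$-relaxation, the recovery inequality for the functional $F_{1,\e}$ follows, and on $BV(\Omega)\setminus W^{1,p}_{u_0,\mathrm{lat}}(\Omega)$ there is nothing to prove since $F_{p,\e} = +\infty$ there does not obstruct — one only needs a recovery sequence, which is exactly what Lemma \ref{lemma33MZ} delivers for every $u\in BV(\Omega)$.

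The only genuinely delicate points are bookkeeping ones: reconciling the $p$-th-root normalization in the definition \eqref{Fpe} of $F_{p,\e}$ with the un-rooted integrals $\int_\Omega W_\e^p(\nabla u_p)\,dx$ appearing in Lemmas \ref{prop31MZ} and \ref{lemma33MZ} — this is handled by the elementary fact that if $a_p := \int_\Omega W_\e^p(\nabla u_p)\,dx$ and the corresponding limit/bound is $a$, then $a_p^{1/p} \to a$ as $p\to1$ provided the sequence stays bounded, together with Hölder $\left(\int W_\e\right) \le a_p^{1/p}|\Omega|^{1-1/p}$ for the matching lower estimate — and checking that $W_\e$ indeed meets the hypotheses ``convex, positively $1$-homogeneous, satisfying \eqref{Wgrowth}'' required by all the cited lemmas, which is clear since $\xi \mapsto \left|\xi_\alpha\big|\frac1\e\xi_3\right|$ is a norm on $\mathbb R^3$ equivalent to the Euclidean one with constants $\min(1,\tfrac1\e)$ and $\max(1,\tfrac1\e)$. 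Thus the proof reduces almost entirely to invoking Lemma \ref{prop31MZ} for the lower bound and Lemma \ref{lemma33MZ} for the upper bound, and I expect essentially no obstacle beyond this normalization bookkeeping.
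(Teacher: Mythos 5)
Your proposal is correct and follows essentially the same route as the paper's own proof: the upper bound is obtained by invoking Lemma \ref{lemma33MZ} with $W=W_\e$ and the lower bound by invoking Lemma \ref{prop31MZ}. The additional bookkeeping you supply (checking that $W_\e$ is a norm satisfying \eqref{Wgrowth} for fixed $\e$, and reconciling the $p$-th-root normalization in \eqref{Fpe} with the un-rooted integrals via H\"older and the boundedness of the energies) is sound and merely makes explicit what the paper leaves implicit.
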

\begin{proof}[Proof]
We start observing that Lemma \ref{lemma33MZ} guarantees the existence of a sequence $\{u_p\}_p$ such that $u_p \in W^{1,p}_{u_0,{\rm lat}}(\Omega)$,
$$
\displaystyle{\lim_{p \to 1}\int_\Omega |u_p- u|^{1^\ast}dx =0,}
$$
and
$$
\displaystyle{\lim_{p \to 1}\int_{\Omega}W_\e^p(\nabla u_p)dx = \int_\Omega W_\e\left(\frac{d D u}{d |Du|}\right)d |D u|+ \int_{\partial \omega \times \left(-\frac{1}{2}, \frac{1}{2}\right)}W_\e((u-u_0){\nu})d{\cal H}^2=}
$$
$$
\displaystyle{\left|D_\alpha u ,\frac{1}{\e}D_3 u\right|(\Omega)+ \int_{\partial \omega \times \left(-\frac{1}{2}, \frac{1}{2}\right)}W_\e((u-u_0){\nu})d{\cal H}^2.}
$$
These prove the upper bound.
For what concerns the lower bound it is enough to invoke Proposition \ref{prop31MZ}. This concludes the proof.
\end{proof}

\begin{Remark}\label{Rem34AGZAimeta}
We recall that the $\Gamma$-convergence result as $\e \to 0$, stated in Proposition \ref{gammae} is the same either if we consider the family of functionals $\{G_{1,\e}\}_{\e}$ in \eqref{G1e} or their relaxed ones $\{I_{1,\e}\}_{\e}$ in \eqref{F1e} (cf. \cite[Proposition 6.11]{DM}).
\end{Remark}

\begin{Remark}\label{rem35AGZAimeta}
Let $O\subset \mathbb R^N$ be any bounded open set with piecewise $C^1$ boundary $1< {\bar p}$, and 
let $u_1 \in W^{1-\frac{1}{p},p}(\partial O)$.
The results expressed by Proposition \ref{prop2.3Aimeta} and the arguments in the first part of that proof, allow us to prove $\Gamma$-convergence, as $p \to 1$, with respect to $L^1$-strong convergence of the functionals $\{G_p\}_p :u \in W^{1,p}_{u_1}(O) \to \int_O W^p(\nabla u)dx$ towards $G_1: u \in BV(O) \to \int_O W\left(\frac{d Du}{d |D u|}\right) d |Du| + \int_{\partial O}W(|u-u_1|{\bf n})d {\cal H}^{N-1}$ (${\bf n}$ being the unit exterior normal to $\partial O$) for any $W:\mathbb R^N \to [0,+\infty)$ convex, positively $1$-homogeneous, satisfying a linear growth condition as \eqref{Wgrowth}. 

\medskip
We also observe that in Theorem \ref{corollary33AGZAimeta} and in the preliminary lemmata, we have chosen a function $W$ positively $1$-homogeneous, having in mind the applications to the $-\Delta_1$ type equations, but the $\Gamma$ convergence results hold similarly without this assumption, introducing the recession function $W^\infty$ in the integrals dealing with the singular part of $Du$.

\end{Remark}

\begin{Remark}\label{diagram} Let $\omega \subset \mathbb R^2$ is a bounded open set, piecewise $C^1$, and let $u_0\in W^{1-\frac{1}{\bar p}, {\bar p}}(\partial \omega)$, for some ${\bar p} >1$, recall the families of problems $\{{\cal P}_{p,\e}\}_{p,\e}$, $\{{\cal P}_{1,\e}\}_\e$, $\{{\cal P}_{p,0}\}_p$ and ${\cal P}_{1,0}$  in \eqref{p-e-problem}, \eqref{1-e-problem}, \eqref{p-0-problem} and \eqref{1-0-problemBv}, respectively.

 As a consequence  of the above results we obtain that the dimensional reduction and the so-called power law approximation, namely the convergence as $p\to 1$, commute in the sense of $\Gamma$- convergence with respect to $L^1(\Omega)$-strong convergence, as  summarized by the following diagram:

\begin{picture}(400,160)(-200,-100)
\put(-20,30){\vector(1,0){80}}
\put(-50,28){${\cal P}_{p,\e}$}
\put(70,28){${\cal P}_{1,\e}$}
\put(0,16){$p\longrightarrow 1$}
\put(-40,20){\vector(0,-1){80}}
\put(-30,-10)
{$\e\longrightarrow 0$}
\put(75,20){\vector(0,-1){80}}
\put(80,-10)
{$\e\longrightarrow 0$}
\put(-50,-78){${\cal P}_{p,0}$}
\put(-20,-75){\vector(1,0){80}}
\put(0,-89){$p\longrightarrow 1$}
\put(70,-78){${\cal P}_{1,0}$}
\end{picture}

Indeed, the  vertical arrows have been treated in Proposition \ref{gammae}, the upper horizontal arrow has been  proved in Theorem \ref{corollary33AGZAimeta} while the lower horizontal arrow follows from Theorem \ref{Gammap0to10}.

Other types of commutativity of solutions to problems ${\cal P}_{p,\e}$ as $p\to 1$ and $\e \to 0$ will be discussed in the following sections.
\end{Remark}

\begin{Remark}\label{Gammaconvergenceinmonotonicity}
Let $\omega \subset \mathbb R^2$ be a bounded open set, piecewise $C^1$, with ${\cal L}^2(\omega)=1$ for convenience, let $\Omega:= \omega \times \left(-\frac{1}{2},\frac{1}{2}\right)$, let $W: \mathbb R^3 \to \mathbb [0,+\infty[$ be a continuous function, positively $1$-homogeneous and verifying \eqref{Wgrowth}. Fix $\overline{p}>1$,  and let $u_0 \in W^{1,\overline{p}}_{\rm loc}(\mathbb R^2)$. For every $1< p\leq \overline{p}$ we can define the functionals
\begin{equation}\nonumber
{\cal F}_p(u)=\left\{ 
\begin{array}{ll} 
\left\| W(\nabla u)\right\|_{L^p(\Omega)} & \hbox{ if }u \in W^{1,p}_{u_0, {\rm lat}}(\Omega),\\
+\infty & \hbox{otherwise in }BV(\Omega).
\end{array}
\right.
\end{equation}
It is easily verified that ${\rm dom}({\cal F}_p) \supset {\rm dom}({\cal F}_{q}) $ whenever $1< p <q$  and if $u \in {\rm dom}({\cal F}_q)$ then ${\cal F}_p(u) \leq {\cal F}_q(u)$.

Let ${\cal F}:BV(\Omega) \to [0,+\infty]$ be the functional defined as
\begin{equation}\nonumber
{\cal F}(u)=\left\{ 
\begin{array}{ll}
\left\|W(\nabla u)\right\|_{L^1(\Omega)} & \hbox{ if } u \in \bigcup_{p > 1}W^{1,p}_{u_0, {\rm lat}}(\Omega), \\
\\
+ \infty &\hbox{ if }u \in BV(\Omega)\setminus \bigcup_{p > 1}W^{1,p}_{u_0, {\rm lat}}(\Omega).
\end{array}
\right.
\end{equation}
The monotonicity of $\{{\cal F}_p\}_p$ provides pointwise convergence as $p \to 1$ of ${\cal F}_p(u)$ towards ${\cal F}(u)$, for every  $u \in BV(\Omega)$. On the other hand it is easy to verify that ${\cal F}$ is not lower semicontinuous with respect to $L^1(\Omega)$ strong convergence. Thus \cite[Proposition 5.7]{DM} ensures $\Gamma$ convergence, with respect to $L^1(\Omega)$ strong convergence, of ${\cal F}_p$, as $p \to 1$, towards the lower semicontinuous envelope of ${\cal F}$, denoted by $\overline{\cal F}$. On the other hand  Lemma \ref{prop31MZ} and Lemma \ref{lemma33MZ} guarantee that  $\{{\cal F}_p\}_p$ $\Gamma$ converges, with respect to  $L^1(\Omega)$ strong convergence, as $p \to 1$, to the functional
\begin{equation}\nonumber
\displaystyle{{\cal F}_1(u)=\int_\Omega W(\nabla u)dx + \int_\Omega W\left(\frac{d D^s u}{d |D^s u|}\right)d |D^s u|+\int_{\partial \omega \times \left(-\frac{1}{2},\frac{1}{2}\right)} W((u-u_0) \nu) d {\cal H}^2}
\end{equation}
for every $u \in BV(\Omega)$. 
Consequently we have proven that ${\overline{\cal F}}(u)= {\cal F}_1(u)$ for every $u \in BV(\Omega)$.
\end{Remark}

\section{Asymptotics in terms of differential problems}\label{dualityapproach}

Formally, putting $p = 1$ in (\ref{p-e-equation}) and (\ref{p-0-equation-2}), one obtains
\begin{equation}\label{Delta_1_e}
\left\{\begin{array}{ll}
\displaystyle{-\Delta_{1,\e}u=-{\rm div}\left(|{ Id}_{\e}\nabla u \cdot \nabla u|^{\frac{-1}{2}}{Id}_\e\nabla u\right)= 0} & \hbox{ in }\omega,\\
\\
u= u_0 & \hbox{ on }\partial \omega \times \left(-\frac{1}{2}, \frac{1}{2}\right),
\\
\\
|Id_{\e}\nabla u\cdot \nabla u|^{\frac{-1}{2}}(Id_{\e}\nabla u)\cdot \nu=0 &\hbox{ on }\omega \times\left\{-\frac{1}{2},\frac{1}{2}\right\}
\end{array}
\right.
\end{equation}
where ${ Id}_{\e}$ has been defined in \eqref{Ide}, and
\begin{equation}\label{Delta_1_0}
\left\{\begin{array}{ll}
\displaystyle{-\Delta_{1,0}u=-{\rm div}\left(\frac{\nabla u}{|\nabla u|}\right)= 0} & \hbox{ in }\omega,\\
\\
u= u_0 & \hbox{ on }\partial \omega.
\end{array}
\right.
\end{equation}

\noindent Clearly the above equations are meaningless in $W^{1,1}(\omega)$.
In order to deal  with  problems \eqref{p-e-problem} and \eqref{1-0-problemBv} in terms of PDE's it is useful to approach them via the duality theory developed by Ekeland and Temam in the context of variational problems (see \cite{ET}).

The following proposition is stated in \cite[Proposition 1.1]{KT} and, with the purpose of applications to $1$-Laplace equations quoted also in \cite{D0, D,D2}.  A proof can be found in \cite[Theorem 3.2]{KT} in the context of Hencky's Plasticity theory.

\begin{Proposition}\label{ourresult}
Let $O \subset \mathbb R^n$ be an open set. 
Suppose that $u \in BV(O)$ and $\sigma \in L^\infty(O;\mathbb R^n)$ is such that ${\rm div}\sigma \in L^n(O)$. One defines the distribution $\sigma \cdot Du$ by the following
\begin{itemize}
\item [1.] For every $\varphi \in {\cal D}(O)$
\begin{equation}\nonumber
< \sigma \cdot Du, \varphi>= -\int_O {\rm div}(\sigma)u \varphi dx - \int_O \sigma \cdot D(\varphi) u dx.
\end{equation}
Then, the distribution $\sigma \cdot Du$ hence defined is a bounded measure in $O$, absolutely continuous with respect to $|Du|$, with
\begin{equation}\label{10D2}
|\sigma \cdot Du|\leq |Du ||\sigma|_\infty.
\end{equation} 
\item[2.] Suppose that $O$ is piecewise $C^1$. The following generalized Green's formula holds for
$\varphi \in {\cal D}(\mathbb R^n)$
\begin{equation}\label{11D2}
<\sigma \cdot Du, \varphi>=- \int_O {\rm div}(\sigma) u \varphi dx - \int_O \sigma \cdot D\varphi u dx  + \int_{\partial O}\sigma \cdot {\bf \nu}u\varphi d{\cal H}^{n-1},
\end{equation}
where $\nu$ denotes the unit outer normal to $\partial O$ and ${\cal H}^{n-1}$ the $n-1$ dimensional Hausdorff measure.
\item[3.] Define
$$
\sigma \cdot D^s u = \sigma \cdot Du - \sigma \cdot \nabla u,
$$
where $\nabla u$ and $D^su$ represent the absolutely continuous (with respect to the Lebesgue measure)   and singular part of $Du$. Then $(D^s u)\cdot \sigma $ is singular and 
$$
|\sigma \cdot D^s u| \leq |D^s u||\sigma|_{L^\infty}.
$$
\end{itemize}
\end{Proposition}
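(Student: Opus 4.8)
The plan is to establish the three assertions in turn: item~1 by mollifying $u$, item~2 by the boundary--trace refinement of that approximation together with the weak normal trace of $\sigma$, and item~3 by a blow--up argument at Lebesgue points, which I expect to be the genuinely delicate step.

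\medskip

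\noindent\emph{Item 1.} First one checks the right--hand side of the defining formula is finite: since $u\in BV(O)$ lies in $L^{n/(n-1)}_{\mathrm{loc}}(O)$ and ${\rm div}\,\sigma\in L^n(O)$, H\"older's inequality gives $u\,{\rm div}\,\sigma\in L^1_{\mathrm{loc}}(O)$, while $u\,\sigma\cdot D\varphi\in L^1(O)$ because $\sigma\in L^\infty$ and $D\varphi$ is bounded; hence $\varphi\mapsto\langle\sigma\cdot Du,\varphi\rangle$ is a well--defined linear functional on $\mathcal{D}(O)$. To exhibit it as a measure with the bound \eqref{10D2}, fix an open set $A\subset\subset O$ and put $u_\delta=u\ast\rho_\delta$, so that for $\delta$ small $u_\delta\in C^\infty$, $u_\delta\to u$ in $L^1(A)$ and $\int_A|\nabla u_\delta|\,dx\le|Du|(A_\delta)$, where $A_\delta$ is a $\delta$--neighbourhood of $A$. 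For $\varphi\in\mathcal{D}(A)$, applying the definition of the distributional divergence of $\sigma$ to the compactly supported smooth function $u_\delta\varphi$ yields $\langle\sigma\cdot Du_\delta,\varphi\rangle=\int_A\varphi\,\sigma\cdot\nabla u_\delta\,dx$, whence
\[
|\langle\sigma\cdot Du_\delta,\varphi\rangle|\le\|\varphi\|_{L^\infty}\,\|\sigma\|_{L^\infty(A)}\int_A|\nabla u_\delta|\,dx .
\]
Passing to the limit $\delta\to0^+$ in the defining formula — the term $\int u_\delta\varphi\,{\rm div}\,\sigma$ converges since ${\rm div}\,\sigma\in L^n(O)$ and $\{u_\delta\}$ is bounded in $L^{n/(n-1)}$ (split ${\rm div}\,\sigma$ into a bounded and an $L^n$--small part), and $\int u_\delta\,\sigma\cdot D\varphi$ converges because $u_\delta\to u$ in $L^1$ — one obtains $|\langle\sigma\cdot Du,\varphi\rangle|\le\|\varphi\|_{L^\infty}\,\|\sigma\|_{L^\infty(O)}\,|Du|(O')$ for every $\varphi\in\mathcal{D}(O')$ with $O'\subset\subset O$. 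By the Riesz representation theorem $\sigma\cdot Du$ is a bounded Radon measure on $O$, and a standard localization (shrinking $O'$ onto an arbitrary Borel set $B$) gives $|\sigma\cdot Du|(B)\le\|\sigma\|_{L^\infty}\,|Du|(B)$, i.e.\ \eqref{10D2} and absolute continuity with respect to $|Du|$.

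\medskip

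\noindent\emph{Item 2.} Here one uses that $O$ is piecewise $C^1$, so that the trace operator $\gamma_O:BV(O)\to L^1(\partial O)$ is available and $\sigma$ possesses a weak normal trace $\sigma\cdot\nu\in L^\infty(\partial O)$, with $\|\sigma\cdot\nu\|_{L^\infty(\partial O)}\le\|\sigma\|_{L^\infty(O)}$, characterised by $\int_O\psi\,{\rm div}\,\sigma\,dx+\int_O\sigma\cdot\nabla\psi\,dx=\int_{\partial O}(\sigma\cdot\nu)\,\gamma_O(\psi)\,d\mathcal{H}^{n-1}$ for every $\psi\in W^{1,1}(O)$ (this again follows from ${\rm div}\,\sigma\in L^n$ by a routine argument). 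Approximate $u$ by $u_k\in C^\infty(O)\cap W^{1,1}(O)$ with $u_k\to u$ in $L^1(O)$, $\int_O|\nabla u_k|\,dx\to|Du|(O)$ and $\gamma_O(u_k)\to\gamma_O(u)$ in $L^1(\partial O)$ (see \cite{AFP1}). For $\varphi\in\mathcal{D}(\mathbb R^n)$, apply the normal--trace identity to $\psi=u_k\varphi\in W^{1,1}(O)$ and rearrange, using $\gamma_O(u_k\varphi)=\gamma_O(u_k)\varphi$, to get
\[
\langle\sigma\cdot Du_k,\varphi\rangle=-\int_O{\rm div}(\sigma)\,u_k\,\varphi\,dx-\int_O\sigma\cdot D\varphi\,u_k\,dx+\int_{\partial O}(\sigma\cdot\nu)\,u_k\,\varphi\,d\mathcal{H}^{n-1}.
\]
Letting $k\to\infty$, the two interior integrals converge exactly as in Item~1 and the boundary integral converges because $(\sigma\cdot\nu)\varphi\in L^\infty(\partial O)$ and $\gamma_O(u_k)\to\gamma_O(u)$ in $L^1(\partial O)$; this is \eqref{11D2}.

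\medskip

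\noindent\emph{Item 3.} Set $\sigma\cdot\nabla u\in L^1(O)$ (the pointwise product, integrable since $\sigma\in L^\infty$ and $\nabla u\in L^1$) and $\sigma\cdot D^su:=\sigma\cdot Du-(\sigma\cdot\nabla u)\mathcal{L}^n$. Since $\sigma\cdot Du\ll|Du|=|\nabla u|\mathcal{L}^n+|D^su|$, the measure $\sigma\cdot D^su$ is again absolutely continuous with respect to $|Du|$, and the claimed singularity together with $|\sigma\cdot D^su|\le\|\sigma\|_{L^\infty}|D^su|$ will follow once one shows that the $\mathcal{L}^n$--absolutely continuous part of $\sigma\cdot Du$ is precisely $(\sigma\cdot\nabla u)\mathcal{L}^n$: then $\sigma\cdot D^su$ is carried by an $\mathcal{L}^n$--negligible Borel set $S$ on which $|Du|=|D^su|$, and restricting \eqref{10D2} to $S$ gives $|\sigma\cdot D^su|=|\sigma\cdot Du|\res S\le\|\sigma\|_{L^\infty}|Du|\res S=\|\sigma\|_{L^\infty}|D^su|$. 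This identification of the absolutely continuous part is the main obstacle. I would establish it by a blow--up: at $\mathcal{L}^n$--a.e.\ $x_0$ the function $u$ is approximately differentiable with gradient $\nabla u(x_0)$, $x_0$ is a Lebesgue point of $\sigma$ and of ${\rm div}\,\sigma$, and $x_0$ lies outside the support of $D^su$, so that $|Du|(B_\rho(x_0))/(\omega_n\rho^n)\to|\nabla u(x_0)|$; rescaling $u$ and $\sigma$ about $x_0$ and using \eqref{11D2} on the balls $B_\rho(x_0)$, one computes $\frac{d(\sigma\cdot Du)}{d\mathcal{L}^n}(x_0)=\sigma(x_0)\cdot\nabla u(x_0)$ through the Besicovitch differentiation theorem. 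A less quantitative alternative is to reduce, by truncation of $u$ and locality of the pairing, to the case $u\in W^{1,1}(O)$ — for which $\sigma\cdot Du=(\sigma\cdot\nabla u)\mathcal{L}^n$ is immediate from the defining formula and the definition of ${\rm div}\,\sigma$ — at the cost of a careful bookkeeping of the absolutely continuous parts under the truncation.
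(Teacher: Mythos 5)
A preliminary remark on the comparison you were asked for: the paper does not prove Proposition \ref{ourresult} at all, it imports it from Kohn and Temam \cite[Theorem 3.2]{KT} (see also Demengel \cite{D0,D,D2}), so your proposal is effectively measured against the standard Anzellotti/Kohn--Temam construction of the pairing, which is exactly the route you take: mollification for item 1, normal trace plus strict approximation for item 2, identification of the absolutely continuous part for item 3. Your item 1 is complete and correct.

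The genuine gap is in item 2. After applying the Gauss--Green identity to $\psi=u_k\varphi$ you only verify convergence of the right-hand side; but the left-hand side is $\int_O\varphi\,\sigma\cdot\nabla u_k\,dx$, and to conclude \eqref{11D2} you must also show that this converges to $\int_O\varphi\,d(\sigma\cdot Du)$. For $\varphi\in{\cal D}(O)$ this is immediate from the defining formula (as in item 1), but here $\varphi\in{\cal D}(\mathbb R^n)$ need not vanish near $\partial O$, and $L^1(O)$ convergence of $u_k$ alone does not prevent the measures $(\sigma\cdot\nabla u_k)\,{\cal L}^n$ from sending mass onto $\partial O$ in the limit; as written, your argument only identifies $\lim_k\int_O\varphi\,\sigma\cdot\nabla u_k\,dx$ with the right-hand side of \eqref{11D2}, not with $\langle\sigma\cdot Du,\varphi\rangle$. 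The missing step is a tightness estimate exploiting the strict convergence you already assumed: for $A\subset\subset O$, lower semicontinuity gives $\liminf_k\int_A|\nabla u_k|\,dx\ge|Du|(A)$, which combined with $\int_O|\nabla u_k|\,dx\to|Du|(O)$ yields $\limsup_k\int_{O\setminus A}|\nabla u_k|\,dx\le|Du|(O\setminus A)$; then splitting $\varphi=\eta\varphi+(1-\eta)\varphi$ with a cutoff $\eta\in{\cal D}(O)$, $\eta=1$ on $A$, and using \eqref{10D2} closes the argument (equivalently, cite the continuity of the pairing under strict convergence). Two lesser points: the existence of the weak normal trace $\sigma\cdot\nu\in L^\infty(\partial O)$ with $\|\sigma\cdot\nu\|_{L^\infty(\partial O)}\le\|\sigma\|_{L^\infty(O)}$ is not a one-line ``routine'' consequence of ${\rm div}\,\sigma\in L^n(O)$ and deserves either its own localization argument or a citation, since \eqref{11D2} cannot even be stated without it; and item 3 is a plan rather than a proof, although the plan is sound -- comparing $(\sigma\cdot Du)(B_\rho(x_0))$ with $\int_{B_\rho(x_0)}\sigma\cdot\nabla u\,dx$ via mollification gives an error bounded by $\|\sigma\|_{L^\infty}|D^su|(\overline{B_\rho(x_0)})$, and dividing by $\rho^n$ and applying Besicovitch differentiation at ${\cal L}^n$-a.e. $x_0$ identifies the absolutely continuous part of $\sigma\cdot Du$ as $(\sigma\cdot\nabla u)\,{\cal L}^n$, after which your restriction argument yields $|\sigma\cdot D^su|\le\|\sigma\|_{L^\infty}|D^su|$.
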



By virtue of Proposition \ref{ourresult} applied to $O=\omega$, one may consider the following equation
\begin{equation}\label{1-0-equation}
\left\{
\begin{array}{ll}
-{\rm div}\sigma= 0, &\hbox{ in }\omega,\\
\\
\sigma \cdot D u =|Du| &\hbox{ in }\omega,\\
\\
\sigma \cdot \nu (u-u_0)= |u -u_0| &\hbox{ on }\partial \omega,
\end{array}
\right.
\end{equation}
which represents the formal expression for $1$-harmonic functions in $\omega$ with boundary datum $u_0$, namely for \eqref{Delta_1_0}.

Applying again Proposition \ref{ourresult} to $O=\omega \times \left(-\frac{1}{2},\frac{1}{2}\right)$ we can give a meaning to the anisotropic $-\Delta_{1,\e}$ operator appearing in dimension reduction, and we can also consider it as the `Euler-Lagrange equation' associated to \eqref{Delta_1_e}.

\begin{equation}\label{1-e-equation}
\left\{
\begin{array}{ll}
-{\rm div }\sigma_\e =0 &\hbox{ in }\omega \times \left(-\frac{1}{2}, \frac{1}{2}\right),\\
\\
\sigma_\e \cdot \nabla u =|Id_\e \nabla u \cdot \nabla u|^\frac{1}{2} &\hbox{ in }\omega \times \left(-\frac{1}{2}, \frac{1}{2}\right),\\
\\
\sigma_\e \cdot \nu (u-u_0)= |u-u_0| &\hbox{ on } \partial \omega \times \left(-\frac{1}{2}, \frac{1}{2}\right),\\
\\
\sigma_\e \cdot \nu= 0 &\hbox{ on } \omega \times \left\{-\frac{1}{2}, \frac{1}{2}\right\},
\end{array}
\right.
\end{equation}
where $\nu$ represents the unit outer normal vector to $\partial \omega \times \left(-\frac{1}{2}, \frac{1}{2}\right)$ and $Id_\e$ is as in \eqref{Ide}.

 Via the duality theory the solutions to (\ref{1-0-equation}) and (\ref{1-e-equation}) are in correspondence with the minimizers of ${\cal P}_{1,\e}$ in \eqref{1-e-problemBv} and ${\cal P}_{1,0}$ in \eqref{1-0-problemBv}, according to the regularity assumptions on $u_0$.

In fact we can invoke Theorem \ref{Theorem ET} and apply it to   
\eqref{1-0-equation} and \eqref{1-e-equation}. Namely, having in mind the notations of Theorem \ref{Theorem ET} in the first case we can set $X= W^{1,1}(\omega)$ and $Y=(L^1(\omega))^2 $, the linear operator $\Lambda$ maps $u \in X$ to $\nabla u \in Y$, $G$ and $F$ are defined as
$$
\displaystyle{G({\bf p})=\int_\omega \left(\sum_{i=1}^2 p_i^2\right)^{\frac{1}{2}}dx_1dx_2},
$$  
with ${\bf p}=(p_1,p_2)$
$$
\displaystyle{F(u)=\left\{
\begin{array}{ll}
0 &\hbox{ if }u \equiv u_0 \hbox{ in }\partial \omega,
\\
\\
+ \infty &\hbox{ otherwise,} 
\end{array}
\right.}
$$
where the equality is intended, as usual, in the sense of traces, recalling that $u_0 \in W^{1-\frac{1}{\bar p}, {\bar p}}(\partial \omega)$, for some ${\bar p} >1$.

Thus it easily checked that the dual Problem of ${\cal P}_{1,0}$ is 
\begin{equation}\label{dualP_1_0}
\displaystyle{{\cal D}_0=\sup_{\begin{array}{ll}
\sigma \in L^\infty(\omega;\mathbb R^3),
\\
{\rm div}\sigma =0, |\sigma|\leq 1
\end{array}}\left\{-\int_{\partial \omega}\sigma \cdot \nu u_0 d{\cal H}^1\right\},}
\end{equation}
where in fact $\sigma$ is exactly as in \eqref{1-0-equation}. 

Analogously in the $\e$-dependent case, by assuming $X$ as $W^{1,1}\left(\omega \times \left(-\frac{1}{2}, \frac{1}{2}\right)\right)$ and $Y= (L^1\left(\omega \times \left(-\frac{1}{2},\frac{1}{2}\right)\right))^3$ and $\Lambda:u \in X \to (\nabla_\alpha u, \frac{1}{\e}\nabla_3 u) \in Y$.
Let $G$ be given by
$$
\displaystyle{G({\bf p})= \int_\Omega \left(\sum_{i=1}^3p_i^2 \right)^{\frac{1}{2}}dx_1dx_2dx_3}
$$
 and 
$$
\displaystyle{F(u)=\left\{
\begin{array}{ll}
0 &\hbox{ if }u \equiv u_0 \hbox{ in }\partial \omega\times \left(-\frac{1}{2},\frac{1}{2}\right),
\\
\\
+ \infty &\hbox{ otherwise.} 
\end{array}
\right.}
$$
The dual problem becomes 
$$
\displaystyle{{\cal D}_{1,\e}=\sup_{\begin{array}{ll}
\sigma_\e \in L^\infty(\Omega;\mathbb R^3),
\\
{\rm div}\sigma_\e =0,\left |Id_{\frac{1}{\e}}\sigma_\e\right|\leq 1,\\
\sigma_\e \cdot \nu =0 \hbox{ on }\omega \times \left\{-\frac{1}{2},\frac{1}{2}\right\}
\end{array}}\left\{-\int_{\partial \omega \times \left(-\frac{1}{2},\frac{1}{2}\right)}\sigma_\e \cdot \nu u_0 d{\cal H}^1\right\}.}
$$



\begin{Remark}\label{dualityrem}
We observe that the application of Theorem \ref{Theorem ET} entails the existence of the solution only  to the dual problems, related to anysotropic almost $1$-Laplacian and almost $1$-Laplacian, namely to \eqref{1-0-equation} and \eqref{1-e-equation}. On the other hand the regularity of $u_0$, namely the fact that it is in some suitable fractional Sobolev Space, guarantees the application of our $\Gamma$-convergence results, Theorem \ref{Gammap0to10} and \ref{corollary33AGZAimeta} but not the convergence of the minimizers at $p$-level of ${\cal P}_{p,0}$ and ${\cal P}_{p,\e}$ (that exist for convexity reasons) to the infima in the original problems ${\cal P}_{1,0}$ and ${\cal P}_{1,\e}$ respectively as $p \to 1$. 
A direct proof of existence of minimizers to ${\cal P}_{1,0}$ and ${\cal P}_{1,\e}$ will be provided in the last section.
\end{Remark}

\begin{Proposition}\label{prop1.2KT}
Suppose that $u \in BV(\omega)$, and $\sigma \in L^\infty(\omega;\mathbb R^3)$, with ${\rm div }\sigma=0$ and $|\sigma|\leq 1$ a.e. in $\omega$. Then $u$ and $\sigma$ are extremal for ${\cal P}_{1,0}$ and ${\cal D}_0$, respectively if and only if 
\begin{equation}\label{1.6KT}
\displaystyle{-\sigma \cdot Du = |D u| \hbox{ as measures on }\omega,}
\end{equation}
and
\begin{equation}\label{1.7KT}
\displaystyle{\sigma \cdot \nu= \frac{u-u_0}{|u-u_0|} \hbox{ on }\partial \omega \cap \{u \not = u_0\}.}
\end{equation}
\end{Proposition}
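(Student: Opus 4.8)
The plan is to apply the Ekeland–Temam duality framework (Theorem \ref{Theorem ET}) in the concrete setting already laid out before the statement: $X = W^{1,1}(\omega)$, $Y = (L^1(\omega))^2$, $\Lambda u = \nabla u$, $G(\mathbf p) = \int_\omega |\mathbf p|\,dx$ and $F$ the indicator of the affine subspace $\{u \equiv u_0 \text{ on }\partial\omega\}$ (in the trace sense). Under the standing regularity assumption $u_0 \in W^{1-\frac1{\bar p},\bar p}(\partial\omega)$ with $\bar p>1$, there is an admissible competitor with finite energy on which $G$ is continuous, so Theorem \ref{Theorem ET} gives $\mathcal P_{1,0} = \mathcal D_0$ together with existence of a maximizer $\sigma$ for $\mathcal D_0$; one should first recall why this identifies $\mathcal D_0$ with the supremum written in \eqref{dualP_1_0}, namely that $G^\ast$ is the indicator of the unit ball $\{|\sigma|\le 1\}$ and $F^\ast(\Lambda^\ast\sigma^\ast)$ forces $\operatorname{div}\sigma = 0$ inside $\omega$ with the boundary term $-\int_{\partial\omega}\sigma\cdot\nu\, u_0\, d\mathcal H^1$ surviving. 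The extremality relations are then the Fenchel equality conditions: $u$ and $\sigma$ are optimal if and only if
\begin{equation}\nonumber
F(u) + F^\ast(\Lambda^\ast\sigma) = \langle \Lambda^\ast\sigma, u\rangle \quad\text{and}\quad G(\Lambda u) + G^\ast(-\sigma) = \langle -\sigma, \Lambda u\rangle,
\end{equation}
and the whole proof consists in unwinding these two equalities into \eqref{1.6KT} and \eqref{1.7KT}.

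The second Fenchel equality, $\int_\omega |\nabla u| \le \langle -\sigma, \nabla u\rangle$ (with the reverse inequality automatic from $|\sigma|\le1$), is where Proposition \ref{ourresult} does the real work: since $\operatorname{div}\sigma = 0 \in L^n(\omega)$ and $\sigma \in L^\infty$, the pairing $\sigma\cdot Du$ is a well-defined bounded measure with $|\sigma\cdot Du|\le |Du|$, and the generalized Green's formula \eqref{11D2} (applied with test functions approximating $1$, using that $\omega$ is piecewise $C^1$) converts the duality pairing $\langle -\sigma, \nabla u\rangle$ — which a priori only involves the absolutely continuous part — into the full measure statement, picking up a boundary contribution. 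Concretely one writes, for $u$ extended suitably and $\varphi \nearrow 1$,
\begin{equation}\nonumber
\langle -\sigma\cdot Du, 1\rangle = -\int_{\partial\omega}\sigma\cdot\nu\, u\, d\mathcal H^1 = -\int_{\partial\omega}\sigma\cdot\nu\, u_0\, d\mathcal H^1 + \int_{\partial\omega}\sigma\cdot\nu\,(u - u_0)\, d\mathcal H^1,
\end{equation}
so that the optimality identity $\mathcal P_{1,0} = |Du|(\omega) + \int_{\partial\omega}|u-u_0|\,d\mathcal H^1 = -\int_{\partial\omega}\sigma\cdot\nu\,u_0\,d\mathcal H^1 = \mathcal D_0$ rearranges into
\begin{equation}\nonumber
\Big(|Du|(\omega) + \langle \sigma\cdot Du, 1\rangle\Big) + \Big(\int_{\partial\omega}|u-u_0|\,d\mathcal H^1 - \int_{\partial\omega}\sigma\cdot\nu\,(u-u_0)\,d\mathcal H^1\Big) = 0.
\end{equation}
By \eqref{10D2} the first bracket is a nonnegative measure integrated against $1$, and since $|\sigma\cdot\nu|\le 1$ $\mathcal H^1$-a.e. on $\partial\omega$ (a consequence of $|\sigma|\le1$ together with the normal-trace theory), the second bracket is a nonnegative integrand. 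Hence both must vanish: the first forces $-\sigma\cdot Du = |Du|$ as measures on $\omega$, which is \eqref{1.6KT}; the second forces $\sigma\cdot\nu\,(u-u_0) = |u-u_0|$ $\mathcal H^1$-a.e., i.e. $\sigma\cdot\nu = (u-u_0)/|u-u_0|$ on $\partial\omega\cap\{u\ne u_0\}$, which is \eqref{1.7KT}. The converse is immediate: given \eqref{1.6KT} and \eqref{1.7KT} one runs the same computation backwards to obtain $\mathcal P_{1,0} \le \int |Du| + \int|u-u_0| = \mathcal D_0$-value attained at $\sigma$, so the weak duality inequality of Theorem \ref{Theorem ET} is saturated and both $u$, $\sigma$ are extremal.

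The main obstacle, and the point that needs the most care, is the precise meaning of the normal trace $\sigma\cdot\nu$ on $\partial\omega$ and the inequality $|\sigma\cdot\nu|\le 1$ $\mathcal H^1$-a.e.: $\sigma$ is only $L^\infty$ with $L^2$ divergence, so $\sigma\cdot\nu$ is an element of (at best) $L^\infty(\partial\omega)$ defined via \eqref{11D2}, and one must justify that the formula \eqref{11D2} holds not just for $\varphi\in\mathcal D(\mathbb R^n)$ but in the limit $\varphi\to u\cdot(\text{cutoff})$ in the relevant sense — this is exactly the kind of passage handled in \cite{KT} and, as the text notes, a complete proof is in \cite[Theorem 3.2]{KT}; I would cite that for the trace technicalities and concentrate the written argument on the Fenchel-equality bookkeeping above. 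A secondary point is that the infimum $\mathcal P_{1,0}$ in \eqref{1-0-problemBv} is over $BV(\Omega)$ with $D_3 u = 0$, i.e. effectively over $BV(\omega)$; one identifies the two as in the Introduction, and the duality pairing and Green's formula are then genuinely $2$-dimensional, so $\mathbb R^3$ versus $\mathbb R^2$ is only a matter of regarding $\sigma$ as $x_3$-independent with vanishing third component on the relevant set.
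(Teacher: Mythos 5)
Your proposal is correct and follows essentially the same route as the paper: the generalized Green's formula of Proposition \ref{ourresult} (with $\varphi\equiv 1$ near $\omega$) together with the bounds $|\sigma\cdot Du|\leq |Du|$ and $|\sigma\cdot\nu|\leq 1$ yields exactly the paper's inequality chain, and extremality (via $\mathcal P_{1,0}=\mathcal D_0$ from Theorem \ref{Theorem ET}) saturates it, forcing the interior and boundary terms to be equalities separately, which is \eqref{1.6KT} and \eqref{1.7KT}, with the converse obtained by running the computation backwards. Your Fenchel-equality framing and the remarks on the normal trace are just a more explicit packaging of the same argument, deferring the trace technicalities to \cite{KT} as the paper implicitly does.
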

\begin{proof}[Proof]
Using \eqref{10D2} and \eqref{11D2} with $A= {\rm Id}$ and $\varphi \in {\cal D}(\mathbb R^2)$, with $\varphi  \equiv 1$ in a neighborhood of $\omega$, and since $|\sigma \cdot \nu |\leq 1$ on $\partial \omega$, we have
\begin{equation}\label{1.8KT}
\begin{array}{ll}
\displaystyle{-\int_{\partial \omega}(\sigma \cdot \nu)u_0 d {\cal H}^1 =- \int_{\partial \omega}(\sigma \cdot \nu)u d{\cal H}^1 + \int_{\partial \omega}(\sigma \cdot \nu)(u-u_0)d {\cal H}^1=}\\
\\
\displaystyle{-\int_{\partial \omega} (\sigma \cdot \nu)(u_0-u)d {\cal H}^1 -\int_{\omega} \sigma \cdot D u \leq}\\
\\
\displaystyle{\int_{\partial \omega}|u_0-u|d {\cal H}^1 +\int_\omega |Du|.}
\end{array}
\end{equation}  
Equality holds in \eqref{1.8KT} if and only if $u$ and $\sigma$ are extremals for ${\cal P}_{1,0}$ in \eqref{1-0-problemBv} and for ${\cal D}_0$ in \eqref{dualP_1_0}, repsectively. In fact, in that case
$$
\displaystyle{\int_{\partial \omega}(\sigma \cdot \nu)(u-u_0) d {\cal H}^1 =\int_{\partial \omega} |u-u_0|d {\cal H}^1},
$$
which implies \eqref{1.7KT}, since $|\sigma \cdot \nu|\leq 1$ a.e. , and 
$$
-\int_\omega (\sigma \cdot Du)=\int_\omega |Du|,
$$
which implies \eqref{1.6KT}, since $(\sigma \cdot Du)+ |Du|$ is a nonnegative measure.
\end{proof}

A proof entirely analogous lead to the following result.
\begin{Proposition}\label{prop1.2KTRobin}
Suppose that $u \in BV\left(\omega\times \left(-\frac{1}{2},\frac{1}{2}\right)\right)$, and $\sigma \in L^\infty\left(\omega\times \left(-\frac{1}{2},\frac{1}{2}\right);\mathbb R^3\right)$, with ${\rm div }\sigma=0$ and $\left|Id_{\frac{1}{\e}}\sigma\right|\leq 1$ a.e. in $\omega\times \left(-\frac{1}{2},\frac{1}{2}\right)$. Then $u$ and $\sigma$ are extremal for ${\cal P}_{1,\e}$ and ${\cal D}_{1,\e}$, respectively if and only if 
\begin{equation}\nonumber
\displaystyle{-\sigma \cdot Du = |Id_\e Du \cdot D u|^\frac{1}{2} \hbox{ as measures on }\omega\times \left(-\frac{1}{2},\frac{1}{2}\right),}
\end{equation}
and
\begin{equation}\nonumber
\displaystyle{\sigma \cdot \nu= \frac{u-u_0}{|u-u_0|} \hbox{ on }\partial \omega\times \left(-\frac{1}{2},\frac{1}{2}\right) \cap \{u \not = u_0\},}
\end{equation}
and
\begin{equation}\nonumber
\displaystyle{\sigma \cdot \nu =0} \hbox{ on }\omega \times \left\{-\frac{1}{2},\frac{1}{2}\right\}. 
\end{equation}
\end{Proposition}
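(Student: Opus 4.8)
The plan is to follow, step by step, the argument for Proposition~\ref{prop1.2KT}, the only new ingredients being the anisotropic weight $Id_\e$ in the bulk term and the homogeneous condition on the two faces $\omega\times\left\{-\frac12,\frac12\right\}$. Write $\Omega=\omega\times\left(-\frac12,\frac12\right)$, $\Sigma_{\rm lat}=\partial\omega\times\left(-\frac12,\frac12\right)$ and $\Sigma_\pm=\omega\times\left\{-\frac12,\frac12\right\}$, and observe that $\Omega$ is piecewise $C^1$, so Proposition~\ref{ourresult} applies to $O=\Omega$ and the given $\sigma$ (indeed ${\rm div}\,\sigma=0\in L^\infty(\Omega)\subset L^3(\Omega)$). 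First I would test the generalized Green formula~\eqref{11D2} with a function $\varphi\in{\cal D}(\mathbb R^3)$ equal to $1$ on a neighbourhood of $\overline\Omega$ and use ${\rm div}\,\sigma=0$ together with $\sigma\cdot\nu=0$ on $\Sigma_\pm$ to obtain
\begin{equation}\nonumber
-\int_{\Sigma_{\rm lat}}(\sigma\cdot\nu)\,u_0\,d{\cal H}^2=-\int_\Omega\sigma\cdot Du+\int_{\Sigma_{\rm lat}}(\sigma\cdot\nu)(u-u_0)\,d{\cal H}^2 .
\end{equation}

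Next I would estimate the two terms on the right. Since the outer normal on $\Sigma_{\rm lat}$ is horizontal, $\nu=(\nu_\alpha,0)$ with $|\nu_\alpha|=1$, one has $\sigma\cdot\nu=\sigma_\alpha\cdot\nu_\alpha$, hence $|\sigma\cdot\nu|\le|\sigma_\alpha|\le|Id_{\frac1\e}\sigma|\le1$ a.e.\ on $\Sigma_{\rm lat}$, and therefore $\int_{\Sigma_{\rm lat}}(\sigma\cdot\nu)(u-u_0)\,d{\cal H}^2\le\int_{\Sigma_{\rm lat}}|u-u_0|\,d{\cal H}^2$. For the bulk term one needs the anisotropic Cauchy--Schwarz inequality at the level of measures,
\begin{equation}\nonumber
-\sigma\cdot Du\le|Id_\e Du\cdot Du|^{1/2}\quad\text{as measures on }\Omega,
\end{equation}
the right-hand side being the total variation of $\left(D_\alpha u\Big|\tfrac1\e D_3 u\right)$; granting this, $-\int_\Omega\sigma\cdot Du\le\left|D_\alpha u\Big|\tfrac1\e D_3 u\right|(\Omega)$, and combining the two bounds gives
\begin{equation}\nonumber
-\int_{\Sigma_{\rm lat}}(\sigma\cdot\nu)\,u_0\,d{\cal H}^2\ \le\ \left|D_\alpha u\Big|\tfrac1\e D_3 u\right|(\Omega)+\int_{\Sigma_{\rm lat}}|u-u_0|\,d{\cal H}^2\ =\ I_{1,\e}(u).
\end{equation}

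I would then read off the equality case. By Theorem~\ref{Theorem ET}, applied with the $X$, $Y$, $\Lambda$, $F$, $G$ fixed in this section for the $\e$-dependent problem, one has ${\cal D}_{1,\e}={\cal P}_{1,\e}$ and the supremum defining ${\cal D}_{1,\e}$ is attained; so, if $\sigma$ is admissible for ${\cal D}_{1,\e}$ (which in particular requires $\sigma\cdot\nu=0$ on $\Sigma_\pm$), then $(u,\sigma)$ is extremal for $({\cal P}_{1,\e},{\cal D}_{1,\e})$ exactly when the last display is an equality, and an equality there forces both intermediate inequalities to be equalities. The equality $\int_{\Sigma_{\rm lat}}(\sigma\cdot\nu)(u-u_0)\,d{\cal H}^2=\int_{\Sigma_{\rm lat}}|u-u_0|\,d{\cal H}^2$ together with $|\sigma\cdot\nu|\le1$ yields $\sigma\cdot\nu=\frac{u-u_0}{|u-u_0|}$ ${\cal H}^2$-a.e.\ on $\Sigma_{\rm lat}\cap\{u\ne u_0\}$, while $-\int_\Omega\sigma\cdot Du=\left|D_\alpha u\Big|\tfrac1\e D_3 u\right|(\Omega)$, combined with the fact that $|Id_\e Du\cdot Du|^{1/2}+\sigma\cdot Du$ is a nonnegative measure, yields $-\sigma\cdot Du=|Id_\e Du\cdot Du|^{1/2}$ as measures on $\Omega$. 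For the converse, assuming the three conditions, $\sigma$ is admissible for ${\cal D}_{1,\e}$ and $u\in BV(\Omega)$ is admissible for the penalized problem ${\cal P}_{1,\e}$; running the chain above with every inequality replaced by an equality gives $-\int_{\Sigma_{\rm lat}}(\sigma\cdot\nu)u_0\,d{\cal H}^2=I_{1,\e}(u)$, and since the left-hand side is $\le{\cal D}_{1,\e}$ and the right-hand side is $\ge{\cal P}_{1,\e}={\cal D}_{1,\e}$, both are extremal.

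The only genuinely new point, and the one I expect to be the main obstacle, is the anisotropic Cauchy--Schwarz inequality for measures used in the bulk estimate: unlike in the isotropic Proposition~\ref{prop1.2KT}, the constraint $|Id_{\frac1\e}\sigma|\le1$ does not bound $\sigma$ in the Euclidean norm by a constant compatible with the right-hand side, so one cannot simply invoke $|\sigma\cdot Du|\le|Du|\,|\sigma|_\infty$. One way to handle it is to refine part~1 of Proposition~\ref{ourresult} by identifying the Radon--Nikod\'ym density $\frac{d(\sigma\cdot Du)}{d|Du|}$ with $\sigma^\ast\cdot\frac{dDu}{d|Du|}$ for a precise representative $\sigma^\ast$ of $\sigma$ that still satisfies $|Id_{\frac1\e}\sigma^\ast|\le1$ $|Du|$-a.e., and then apply the elementary identity $a\cdot b=(Id_{\frac1\e}^{1/2}a)\cdot(Id_\e^{1/2}b)$ together with the ordinary Cauchy--Schwarz inequality and the constraint. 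Alternatively one may reduce to the isotropic statement by the fixed linear change of variables $(x_\alpha,x_3)\mapsto(x_\alpha,\e x_3)$, which sends $\sigma$ to a vector field of Euclidean norm $\le1$ and $|Id_\e Du\cdot Du|^{1/2}$ to the ordinary total variation of the dilated function, so that part~1 of Proposition~\ref{ourresult} and the ``nonnegative measure of zero total mass'' argument apply verbatim; all remaining steps then coincide with those in the proof of Proposition~\ref{prop1.2KT}.
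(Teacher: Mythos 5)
Your argument is correct and coincides with the paper's own treatment: the paper gives no separate proof, stating only that the result follows by an argument "entirely analogous" to Proposition \ref{prop1.2KT}, and your chain (Green's formula \eqref{11D2} with $\varphi\equiv1$, the bound $|\sigma\cdot\nu|\le1$ on the lateral boundary, weak duality plus attainment from Theorem \ref{Theorem ET}, and the nonnegative-measure argument for the equality case) is exactly that adaptation, with the face condition $\sigma\cdot\nu=0$ correctly read off from admissibility in ${\cal D}_{1,\e}$. Your explicit handling of the anisotropic Cauchy--Schwarz inequality at the level of measures --- in particular the reduction via the dilation $(x_\alpha,x_3)\mapsto(x_\alpha,\e x_3)$ to the isotropic estimate $|\sigma\cdot Du|\le|Du|\,|\sigma|_\infty$ of Proposition \ref{ourresult} --- supplies the one detail the paper leaves implicit.
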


\section{Asymptotics in terms of least gradient problem}\label{LGdimred}

The target of this section consists of discussing asymptotics as $\varepsilon \to 0$ and $p \to 1$ for problems \eqref{p-e-equation} when the imposed boundary datum has a regularity, in principle different from that required in the previous $\Gamma$-convergence analysis, but a more stringent requirement is imposed on the domain $\omega \times \left(-\frac{1}{2},\frac{1}{2}\right)$.  Under this new setting of assumptions we will prove that the problems ${\cal P}_{1,\e}$ in \eqref{1-e-problemBv} and ${\cal P}_{1,0}$ in \eqref{1-0-problemBv} admit indeed a solution. Consequently in the light of Propositions \ref{prop1.2KT} and \ref{prop1.2KTRobin}, there exist solutions to the anysotropic almost $1$-Laplacian and almost $1$-Laplacian in \eqref{Delta_1_e} and \eqref{Delta_1_0}, respectively.   We recall that the symbols for the domains $\Omega$ and $\omega$ denote the same sets as in subsection \ref{RP}, namely $\omega \subset \mathbb R^2$, bounded open set and $\Omega=\omega \times \left(-\frac{1}{2},\frac{1}{2}\right).$  

As already observed in subsection \ref{RP} there is equivalence between problems \eqref{p-e-equation}  and their variational formulation \eqref{p-e-problem} when $p>1$ and the boundary datum $u_0$ is in a suitable fractional Sobolev space. This fact may be no longer true if one requires $u_0$ to be a continuous function of $\partial \omega$, cf. \cite{J}.
 
On the other hand, as already emphasized, the problems ${\cal P}_{p,\e}$ and ${\cal P}_{p,0}$ exhibit other behaviours when $p=1$, and the equivalence between the integral formulation and the differential one needs to be understood in different ways. We have already seen in section \ref{dualityapproach} the interpetration in terms of duality (see \cite{ET}).  Now we make a link in terms of least gradient functions, which will allow us to determine sufficient conditions for the existence of solutions to ${\cal P}_{1,0}$ and ${\cal P}_{1,\e}$.

\medskip
We start by focusing on the case $p>1$ and $\e=0$, and we recall the definition of $p-harmonic$ functions following \cite[Definition 2.2]{J}, namely weak solutions of \eqref{p-0-equation-2}, when $u_0 \in C(\partial \omega)$. We start by giving this definition on any generic open set $O\subset \mathbb R^n$.  

\begin{Definition}\label{def2.2J}
Let $1<p<\infty$, a continuous function $u \in W^{1,p}_{\rm loc}(O) $ is $p-harmonic$ in $O$ if
\begin{equation}\nonumber
\displaystyle{\int_O |\nabla u|^{p-2}\nabla u \cdot \nabla \varphi dx = 0}
\end{equation}
for every $\varphi \in C^\infty_0(O)$. 
\end{Definition}

\noindent The continuity in Definition \ref{def2.2J} is redundant as shown in \cite{J}.

It is useful also to recall (see \cite{J}) that a continuous function $u \in W^{1,p}_{\rm loc}(O)$ is $p-harmonic$ in $O$ if and only if 
$$
\displaystyle{\int_{O_0} |\nabla u|^p dx \leq \int_{O_0} |\nabla v|^p dx \hbox{ whenever } O_0 \hbox{ open set } \subset \subset O \hbox{ and }u-v \in W^{1,p}_0(O). }
$$ 
\noindent Now we recall some results deeply connected with $-\Delta_1$, problem \eqref{1-0-problemBv} and its approximating ones \eqref{p-e-problem}. 
The analysis we present will be mainly concerned with differential problems defined in the cross section $\omega$, when the boundary datum $u_0$ is regular.  To this end we will recall the notion of functions of least gradient in a generic open set $O\subset \mathbb R^n$.

Let $O\subset \mathbb R^n$ be an open set, following \cite{SWZ}, we say that a function $u \in BV(O)$, with prescribed boundary value $u_0 \in C(\partial O)$  is of least gradient if it is a solution of
\begin{equation}\label{leastgradient1}
\inf_{u \in BV(O)}\{|Du|(O), u \equiv u_0 \hbox{ on } \partial O\}.
\end{equation}

It has been established in \cite{SZ} that the existence of such a function is deeply related with the regularity of $O$, the regularity of the trace $u_0$ and the sense in which this trace must be understood, indeed this latter fact plays a crucial role.

In fact one may also consider 
\begin{equation}\label{leastgradient2}
\inf_{u \in BV(O) \cap C(\overline{O})}\{|Du|(O), u \equiv u_0 \hbox{ on } \partial O\}.\end{equation}

Clearly in this latter problem the trace is intended in the classical sense (restriciton), and the equality $u=u_0$ is understood pointiwise in $\partial O$. On the contrary in \eqref{leastgradient1} the equality $u=u_0$ on $\partial O$ has to be taken in the sense of traces for $BV$-functions (see subsection \ref{gammaconvergence}). 


The following result has been proven in \cite{SWZ}.

\begin{Theorem}\label{thm2.1J}

Let $O \subset \mathbb R^n$ be a bounded Lipschitz domain such that $\partial O$ has non-negative mean curvature (in a weak sense) and is not locally area-minimizing. If $u_0 \in C(\partial O)$, then there exists a unique function of least gradient $u \in BV(O)\cap C(\overline{O})$ such that $u \equiv u_0$ on $\partial O$.
\end{Theorem}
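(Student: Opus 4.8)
The plan is to follow Sternberg--Williams--Ziemer and construct $u$ from the boundaries of its superlevel sets, each of which will turn out to be an area-minimizing hypersurface, and then to extract continuity up to $\partial O$ and uniqueness from the two geometric hypotheses. First I would fix a continuous extension $g\in C(\mathbb R^n)$ of $u_0$ and, for each $t\in\mathbb R$, minimize the perimeter $P(F;B)$ in a fixed ball $B\supset\supset\overline O$ over all sets $F$ with $F\setminus O=\{g>t\}\setminus O$; a minimizer $E_t$ exists by $BV$-compactness and lower semicontinuity of the perimeter. Using the submodularity $P(F_1\cup F_2)+P(F_1\cap F_2)\le P(F_1)+P(F_2)$ one checks that for $s>t$ the sets $E_s\cap E_t$ and $E_s\cup E_t$ are again minimizers for the $s$- and $t$-problems, so the family $\{E_t\}$ may be chosen nested, $E_s\subseteq E_t$, and right-continuous in $t$; then one sets $u(x):=\sup\{t:x\in E_t\}$, a bounded function.

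Next I would verify that $u\in BV(O)$ is of least gradient. By the coarea formula $|Du|(O)=\int_{\mathbb R}P(E_t;O)\,dt$; and if $v\in BV(O)$ has trace $u_0$ then each $\{v>t\}$ is admissible for the $t$-problem, so $P(E_t;O)\le P(\{v>t\};O)$ for a.e.\ $t$ and integration yields $|Du|(O)\le|Dv|(O)$. It then has to be shown that the trace of $u$ on $\partial O$ equals $u_0$, and this is where the hypotheses enter: the non-negative mean curvature of $\partial O$ provides barriers (comparison with $\partial O$ itself and with nearby constant-sign-mean-curvature hypersurfaces) that prevent the minimal surface $\partial E_t\cap O$ from retracting away from the prescribed boundary set, while the hypothesis that $\partial O$ is not locally area minimizing prevents $\partial E_t$ from containing a piece of $\partial O$ of positive $\mathcal H^{n-1}$-measure; together these force $\overline{E_t}\cap\partial O=\overline{\{u_0>t\}}\cap\partial O$, i.e.\ the trace of $u$ is $u_0$.

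For continuity of $u$ on $\overline O$ I would argue by contradiction. A jump of $u$ across a hypersurface $\Sigma\subset O$ means $\Sigma$ is, locally, the common boundary $\partial E_t=\partial E_s$ for an interval $t<s$; but two ordered minimal boundaries that meet coincide on an open set by the strong maximum principle, so $\Sigma$ extends to a full minimal surface across which $u$ jumps. Since a perimeter minimizer for this obstacle problem has no component of its boundary floating inside $O$ (such a component could be filled in or deleted to strictly decrease perimeter), $\Sigma$ must reach $\partial O$; but then, by the trace identity just established, $u_0$ would have to jump along $\Sigma\cap\partial O$, contradicting $u_0\in C(\partial O)$ --- the geometric hypotheses being exactly what make this last step rigorous near the boundary. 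To make the boundary estimates quantitative it is convenient first to treat Lipschitz $u_0$, approximating a general continuous datum uniformly and passing to the limit via the comparison principle, since uniform convergence of boundary data forces uniform convergence of the solutions.

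Finally, for uniqueness suppose $u_1,u_2$ are both least gradient with trace $u_0$; then $u_1\vee u_2$ and $u_1\wedge u_2$ also have trace $u_0$, and the lattice inequality $|D(u_1\vee u_2)|(O)+|D(u_1\wedge u_2)|(O)\le|Du_1|(O)+|Du_2|(O)$ together with their admissibility forces equality throughout, hence, by coarea and the pointwise submodularity of the perimeter, $P(\{u_1>t\}\cup\{u_2>t\})+P(\{u_1>t\}\cap\{u_2>t\})=P(\{u_1>t\})+P(\{u_2>t\})$ for a.e.\ $t$. Since each $\{u_i>t\}$ is a perimeter minimizer for its boundary condition, so are the union and the intersection, and the uniqueness of that minimizer --- this is precisely where ``$\partial O$ not locally area minimizing'' is indispensable --- gives $\{u_1>t\}=\{u_2>t\}$ for a.e.\ $t$, so $u_1=u_2$. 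The main obstacle, and the technical heart of the whole argument, is the step that turns the qualitative barrier information from the two geometric hypotheses into the actual conclusions: that the area-minimizing superlevel sets attain the continuous datum $u_0$ pointwise and that $u$ has no interior or boundary jumps; the existence/minimality and the lattice/uniqueness steps are comparatively soft.
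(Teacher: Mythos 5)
The paper does not prove this statement at all: Theorem \ref{thm2.1J} is quoted verbatim from Sternberg--Williams--Ziemer \cite{SWZ}, so there is no internal proof to compare against; what you have written is an attempted reconstruction of the \cite{SWZ} argument. Your existence part does follow their strategy faithfully (constrained perimeter minimizers $E_t$ for the superlevel sets of a continuous extension of $u_0$, a nested selection, $u(x)=\sup\{t:x\in E_t\}$, coarea to get the least gradient property, and barriers built from the non-negative mean curvature together with the non-area-minimizing hypothesis to get boundary attainment and continuity). You candidly flag that the trace and continuity steps are the technical heart and only describe them, which is fair at sketch level.

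There is, however, a genuine gap in your uniqueness argument. After the lattice inequality forces, for a.e.\ $t$, the equality $P(\{u_1>t\}\cup\{u_2>t\};O)+P(\{u_1>t\}\cap\{u_2>t\};O)=P(\{u_1>t\};O)+P(\{u_2>t\};O)$, you conclude $\{u_1>t\}=\{u_2>t\}$ by invoking ``uniqueness of the perimeter minimizer'' for the constrained problem at level $t$. That uniqueness is not available: the obstacle-type problem $\min\{P(F;B): F\setminus O=\{g>t\}\setminus O\}$ can have several solutions for particular values of $t$ (this is exactly why, in the existence part, you had to make a careful nested, right-continuous selection of the $E_t$ rather than taking ``the'' minimizer), and the hypothesis that $\partial O$ is not locally area-minimizing does not by itself rule this out --- its role is in forcing the minimal boundaries away from $\partial O$ so that the trace is attained. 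In \cite{SWZ} uniqueness is instead obtained by a maximum-principle/connectivity argument on the area-minimizing boundaries $\partial\{u_i>t\}\cap O$: using the already-established continuity up to $\overline O$, every component of such a boundary must reach $\partial O$ in the level set $\{u_0=t\}$, and ordered area-minimizing boundaries that touch must coincide, which is what rules out $u_1\neq u_2$. (For the trace-sense formulation \eqref{leastgradient1}, uniqueness even requires an additional exterior ball condition, cf.\ Theorem \ref{thm2.2SZ} and \cite{SZ}.) So the closing step of your uniqueness paragraph needs to be replaced by an argument of this maximum-principle type rather than an appeal to uniqueness of the level-set minimizers.
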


The assumptions in Theorem \ref{thm2.1J} mean that
\begin{itemize}
\item For every $x \in \partial O$ there exists $\e_0>0$ such that for every set of finite perimeter $A \subset \subset B(x,\e_0)$
\begin{equation}\label{3.1SWZ}
\displaystyle{P(O ;\mathbb R^n) \leq P(O\cup A; \mathbb R^n)}
\end{equation}
\item For every $x \in \partial O$, and every $\eta >0$ there exists a set of finite perimeter $A \subset \subset B(x, \eta)$ such that
\begin{equation}\label{3.2SWZ}
\displaystyle{P(O,B(x,\eta)) > P(O\setminus A, B(x,\eta)),}
\end{equation}
\end{itemize}
where $P(\cdot;\mathbb R^n)$ denotes the perimeter in $\mathbb R^n$.
Examples showing that neither \eqref{3.1SWZ} nor \eqref{3.2SWZ} can be dropped are given in \cite{SWZ}.

On the other hand in \cite{SZ}, (to which we refer for the precise assumptions) it has been established the following result. 

\begin{Theorem}\label{thm2.2SZ}
Let $O \subset \mathbb R^n$ be a bounded Lipschitz domain satisfying the above assumptions and a uniform exterior ball condition of radius $R$. Then there is at most one solution to the least gradient problem (\ref{leastgradient1}).
\end{Theorem}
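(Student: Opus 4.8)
The plan is to reduce the statement to a comparison principle for perimeter-minimizing sets, via the super-level sets of least gradient functions. Suppose $u_1,u_2\in BV(O)$ both solve \eqref{leastgradient1} with the same boundary datum $u_0\in C(\partial O)$. First I would invoke the Bombieri--De Giorgi--Giusti type fact used in \cite{SWZ}: for a.e.\ $t\in\mathbb R$ the super-level set $E_t^i:=\{u_i>t\}$ has finite perimeter and is area-minimizing in $O$, in the sense that $P(E_t^i;U)\le P(G;U)$ whenever $G\triangle E_t^i\subset\subset U\subset\subset O$. Since truncation commutes with the $BV$-trace, for a.e.\ $t$ one also has $\gamma_O(\mathbf{1}_{E_t^1})=\mathbf{1}_{\{u_0>t\}}=\gamma_O(\mathbf{1}_{E_t^2})$ in $L^1(\partial O;{\cal H}^{n-1})$, and by the layer-cake formula $u_1=u_2$ ${\cal L}^n$-a.e.\ in $O$ as soon as $E_t^1=E_t^2$ for a.e.\ $t$. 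Hence it suffices to prove this set equality.

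Next I would symmetrize: $u_1\wedge u_2$ and $u_1\vee u_2$ are competitors with trace $u_0$, and the submodularity of perimeter together with the coarea formula and the minimality of $u_1,u_2$ show that both are again solutions of \eqref{leastgradient1}. Therefore, for a.e.\ $t$, the four sets $E_t^1$, $E_t^2$, $E_t^1\cap E_t^2$, $E_t^1\cup E_t^2$ are all area-minimizing in $O$ and all carry the same trace $\mathbf{1}_{\{u_0>t\}}$ on $\partial O$. The theorem thus follows from the claim: \emph{if $A\subseteq B$ are area-minimizing in $O$ with equal ${\cal H}^{n-1}$-traces on $\partial O$, then $A=B$ up to an ${\cal L}^n$-null set}; applying it to $A=E_t^1\cap E_t^2\subseteq B=E_t^1$ gives $E_t^1\subseteq E_t^2$ for a.e.\ $t$, and the reverse inclusion by symmetry.

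To prove the claim I would argue by contradiction, assuming ${\cal L}^n(B\setminus A)>0$. The interior part is classical: $\partial^\ast A$ and $\partial^\ast B$ are smooth minimal hypersurfaces in $O$ off a singular set of Hausdorff codimension at least $8$; if they touched at an interior point, the strong maximum principle for minimal hypersurfaces would force them to coincide near it, hence---by continuation through the regular part, the singular set being removable for this purpose---throughout $O$, so that $A=B$ in $O$, a contradiction. Thus $\partial^\ast A$ and $\partial^\ast B$ are disjoint inside $O$, and the positive-measure region $B\setminus A$ is bounded inside $O$ by these two disjoint minimal hypersurfaces while having vanishing trace on $\partial O$, i.e.\ the two surfaces ``pinch together'' along the same ${\cal H}^{n-1}$-subset of $\partial O$. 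Here the uniform exterior ball condition is used decisively: at each $x\in\partial O$ the ball $B_R(y)\subset\mathbb R^n\setminus O$ tangent to $\partial O$ at $x$ provides a spherical barrier of mean curvature $\tfrac{n-1}{R}$ pointing toward the exterior of $O$, and comparing with translated and dilated such spheres one shows that an area-minimizing set cannot attach to $\partial O$ along a set of positive ${\cal H}^{n-1}$-measure, nor can two disjoint area-minimizers share a boundary trace near $\partial O$; this rules out the region $B\setminus A$ and proves the claim.

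The hard part is exactly this last, boundary step. The interior non-crossing and maximum-principle inputs are standard, but controlling how an area-minimizer attaches to $\partial O$---excluding positive-${\cal H}^{n-1}$ contact sets along the boundary and the ``sliding'' freedom that destroys uniqueness when $\partial O$ is only Lipschitz---is precisely what the uniform exterior ball condition is designed for, and implementing the spherical-barrier comparison at boundary points where $\partial O$ need not be $C^1$ is the delicate technical point. An alternative organization of the same step would be to prove first, under the exterior ball condition, a trace-determination lemma for area-minimizing sets (``the ${\cal H}^{n-1}$-trace determines the set''), and then read off $E_t^1=E_t^2$ at once.
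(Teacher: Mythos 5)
The paper does not prove this statement at all: Theorem \ref{thm2.2SZ} is quoted as a known result, with the proof delegated to \cite{SZ} (``to which we refer for the precise assumptions''), so there is no internal argument to compare yours against. Judged on its own, your sketch follows the natural level-set strategy of \cite{SWZ, SZ} (superlevel sets of least gradient functions are area-minimizing, the lattice operations $u_1\wedge u_2$, $u_1\vee u_2$ preserve minimality and the trace, reduction to nested area-minimizing sets with equal traces), and those reductions are sound modulo standard facts. But it is not a proof, because the one step that carries the entire content of the theorem is left as a description rather than an argument: the claim that two nested, distinct area-minimizing sets in $O$ cannot have the same ${\cal H}^{n-1}$-trace on $\partial O$ under the uniform exterior ball condition. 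You yourself flag this as ``the delicate technical point,'' and nothing in the sketch substitutes for it --- no barrier is actually constructed, no comparison lemma is stated, and no argument is given for how a spherical barrier of radius $R$ placed in $\mathbb R^n\setminus O$ controls the behaviour of an area-minimizer of $O$ near a merely Lipschitz boundary. Since uniqueness genuinely fails without hypotheses of this kind (a region pinched between two parallel minimal hypersurfaces meeting $\partial O$ along a flat, locally area-minimizing piece of boundary is exactly the obstruction that \eqref{3.2SWZ} and the exterior ball condition are meant to exclude), a proof that omits this step has omitted the theorem.

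Two further points need repair even at the sketch level. First, the auxiliary claim ``an area-minimizing set cannot attach to $\partial O$ along a set of positive ${\cal H}^{n-1}$-measure'' is false as stated: the superlevel set $E_t^i$ attaches to $\partial O$ precisely along $\{u_0>t\}$, which has positive measure for generic $t$; what must be excluded is that the set $B\setminus A$, whose trace vanishes, is nontrivial, i.e.\ that the two disjoint minimal boundaries $\partial^\ast A$ and $\partial^\ast B$ collapse onto the same boundary portion --- a different and more delicate assertion. Second, the interior part (``if the regular parts touch, the strong maximum principle propagates equality through the singular set, hence $A=B$ throughout $O$'') uses a connectedness/unique-continuation argument across a singular set of codimension at least $8$ that is true but by no means automatic; it deserves either a precise citation or a proof. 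As it stands, your text is a plausible reconstruction of the strategy of \cite{SZ}, not a proof of Theorem \ref{thm2.2SZ}.
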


Clearly, combinining both the assumptions in Theorems \ref{thm2.1J} and \ref{thm2.2SZ}, the solutions of problems \eqref{leastgradient1} and \eqref{leastgradient2} are unique and coincide.

In order to deal with the asymptotics as $p\to 1$ of the $-\Delta_p$- equations, Juutinen in \cite[Theorem 3.1]{J} has proven the following theorem (cf. also Remark 3.4 therein).

\begin{Theorem}\label{thm3.1J}
Let $O \subset \mathbb R^n$ be a bounded  smooth domain whose boundary has positive mean curvature and $u_0 \in C(\partial O)$, and let $u \in BV(O)\cap C(\overline{O})$ be the unique function of least gradient such that $u =u_0$ on $\partial O$. Then if $u_p \in W^{1,p}_{\rm loc}(O)\cap C(\overline{O})$ is the unique $p$-harmonic function satisfying $u_p =u_0$ on $\partial O$, it results
$$
\begin{array}{ll}
u_p \to u &\hbox{ uniformly in } O, \hbox{ as }p \to 1.
\end{array}
$$ 
\end{Theorem}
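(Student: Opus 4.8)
The plan is to deduce the uniform convergence from three facts: a bound on $\{u_p\}$ in $L^\infty(O)$ that is uniform in $p$, equicontinuity of $\{u_p\}$ on $\overline O$ that is uniform in $p$, and the identification of every subsequential limit with the unique least gradient function $u$ by means of the uniqueness results quoted above. The $L^\infty$ bound is immediate from the comparison principle for the $p$-Laplacian (constants are $p$-harmonic): since $u_p=u_0$ on $\partial O$, one has $\min_{\partial O}u_0\le u_p\le\max_{\partial O}u_0$ in $O$ for every $p$.

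The heart of the matter is the uniform equicontinuity, and this is exactly where the positive mean curvature of $\partial O$ is used. At a boundary point $x_0$ one constructs, for each $\eta>0$, local super- and sub-barriers on $O\cap B_\rho(x_0)$ which are $p$-supersolutions, resp.\ $p$-subsolutions, simultaneously for all $p$ in a fixed interval $(1,1+\delta)$, which lie above, resp.\ below, the boundary datum on all of $\partial(O\cap B_\rho(x_0))$ (using the modulus of continuity of $u_0$), and which differ from $u_0(x_0)$ by at most $\eta$ near $x_0$; positive mean curvature of $\partial O$ is precisely what makes the ``$1$-Laplacian'' barrier available --- it can be built from the signed distance to an exterior touching sphere --- and a fixed small perturbation of it remains a $p$-supersolution for all $p$ close to $1$. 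By comparison this yields a modulus of continuity $m$, independent of $p$, with $|u_p(x)-u_0(\pi x)|\le m({\rm dist}(x,\partial O))$ for $x$ near $\partial O$, $\pi x$ a nearest boundary point; in particular $u_p\to u_0$ on $\partial O$ at a rate independent of $p$. Interior equicontinuity then follows by the translation device: for small $h$ both $u_p(\cdot)$ and $u_p(\cdot+h)$ are $p$-harmonic and continuous up to the boundary on $O\cap(O-h)$; on $\partial(O\cap(O-h))\subset\partial O\cup(\partial O-h)$ one of the two points $x,x+h$ lies on $\partial O$ and the other within distance $|h|$ of it, so the previous estimate together with the modulus of continuity of $u_0$ bounds $|u_p(x)-u_p(x+h)|$ there by $\widetilde m(C|h|)$ with $\widetilde m$ independent of $p$, and the comparison principle propagates this bound (constants being $p$-harmonic) to all of $O\cap(O-h)$. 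Combined with the boundary barriers this makes $\{u_p\}$ equicontinuous on $\overline O$, uniformly in $p$.

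With the uniform bound and equicontinuity in hand, Arzel\`{a}--Ascoli shows that every sequence $p_j\to1$ has a subsequence along which $u_{p_j}\to v$ uniformly on $\overline O$, where $v\in C(\overline O)$ and $v=u_0$ on $\partial O$. To identify $v$ with $u$, one shows that $v$ is of least gradient. Fix a smooth subdomain $O_0\subset\subset O$ and $w\in BV(O_0)$ with trace $v|_{\partial O_0}$; since $u_p\in W^{1,p}_{\rm loc}(O)$ one has $u_p|_{\partial O_0}\in W^{\frac{p-1}{p},p}(\partial O_0)$ and $u_p|_{\partial O_0}\to v|_{\partial O_0}$ in $L^1(\partial O_0)$, so Proposition \ref{prop2.3Aimeta} applied on $O_0$ produces $U_p\in W^{1,p}(O_0)$ with $U_p=u_p$ on $\partial O_0$ and $\int_{O_0}|\nabla U_p|^p\,dx\to|Dw|(O_0)$. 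Since $u_p-U_p\in W^{1,p}_0(O_0)$, the local minimality of the $p$-harmonic function $u_p$ and H\"older's inequality give
\[
\int_{O_0}|\nabla u_{p_j}|\,dx\le\Bigl(\int_{O_0}|\nabla u_{p_j}|^{p_j}\,dx\Bigr)^{1/p_j}|O_0|^{(p_j-1)/p_j}\le\Bigl(\int_{O_0}|\nabla U_{p_j}|^{p_j}\,dx\Bigr)^{1/p_j}|O_0|^{(p_j-1)/p_j},
\]
and letting $j\to\infty$, lower semicontinuity of the total variation under $L^1$-convergence yields $|Dv|(O_0)\le|Dw|(O_0)$. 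A standard localization argument then upgrades this, together with $v=u_0$ on $\partial O$, to the statement that $v$ solves the least gradient problem \eqref{leastgradient1}; since $O$ is smooth and $\partial O$ has positive mean curvature it satisfies the hypotheses of Theorems \ref{thm2.1J} and \ref{thm2.2SZ} (smoothness supplying the uniform exterior ball condition), so $v=u$. As the limit is independent of the subsequence, $u_p\to u$ uniformly in $O$ as $p\to1$.

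The step I expect to be the main obstacle is the barrier construction in the second paragraph: one must produce comparison functions that are $p$-supersolutions simultaneously for all $p$ in a fixed right-neighbourhood of $1$ and whose boundary control does not deteriorate as $p\to1$, which is exactly what the positive mean curvature of $\partial O$ provides and where the hypothesis cannot be relaxed. The passage from the interior estimate $|Dv|(O_0)\le|Dw|(O_0)$ to the global least gradient property (complicated by the fact that $u_0$ is only continuous, so one cannot work directly up to $\partial O$) and thence to $v=u$ also leans on the uniqueness theory of Theorems \ref{thm2.1J}--\ref{thm2.2SZ}, but the remaining steps are routine.
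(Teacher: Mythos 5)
The paper does not actually prove this statement: it is quoted verbatim from Juutinen \cite[Theorem 3.1]{J}, so the only benchmark is the cited proof, and your architecture (uniform $L^\infty$ bound by comparison with constants, a $p$-independent modulus of continuity up to the boundary via barriers, interior propagation by the translation-plus-comparison device, Arzel\`a--Ascoli, identification of each subsequential limit as a least gradient function through the local $p$-energy minimality of $u_p$ and lower semicontinuity, and finally uniqueness) is indeed the same as Juutinen's. Two minor remarks: the identification can be closed directly with the uniqueness assertion of Theorem \ref{thm2.1J} (a continuous-up-to-the-boundary least gradient function with the given boundary values is unique), so the detour through problem \eqref{leastgradient1} and Theorem \ref{thm2.2SZ} is unnecessary; and before invoking uniqueness you should secure $v\in BV(O)$ globally, e.g.\ by using one fixed competitor in your estimate on an exhausting family $O_0$, since the local inequality alone only gives $v\in BV_{\rm loc}(O)$. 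These are routine.

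The genuine gap is exactly at the step you single out as the crux: the claim that the uniform-in-$p$ barrier ``can be built from the signed distance to an exterior touching sphere'' is not correct, and no fixed small perturbation rescues it. For a radial profile $w=f(|x-y_0|)$ about an exterior center $y_0$ one has
\begin{equation}\nonumber
\Delta_p w=|f'(r)|^{p-2}\Bigl[(p-1)f''(r)+\frac{n-1}{r}\,f'(r)\Bigr],
\end{equation}
and an upper barrier at $x_0\in\partial O$ must increase as one moves into $O$, i.e.\ $f'>0$; then $\Delta_p w\le 0$ forces $f''\le-\frac{n-1}{(p-1)r}f'$, so the admissible profiles degenerate as $p\to1$ and yield no modulus of continuity independent of $p$. (This is consistent with the fact that smoothness alone already gives uniform exterior balls, yet the theorem genuinely needs positive mean curvature.) The barrier that works, and the place where the curvature hypothesis enters, is a concave increasing function of the distance $d(x)={\rm dist}(x,\partial O)$: since $H_{\partial O}\ge H_0>0$ and $\partial O$ is smooth, one has $\Delta d\le -c_0<0$ in a collar $\{d<t_0\}$, whence
\begin{equation}\nonumber
\Delta_p\,\phi(d)=|\phi'(d)|^{p-2}\Bigl[(p-1)\phi''(d)+\phi'(d)\,\Delta d\Bigr]\le 0
\end{equation}
for every $p>1$ as soon as $\phi'\ge0$ and $\phi''\le0$, so these barriers (suitably truncated/combined with the bound $\max|u_0|$ away from $x_0$ and with the modulus of continuity of $u_0$) are $p$-supersolutions uniformly for $p$ near $1$ and do not deteriorate in the limit. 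With this replacement your second paragraph, and hence the whole argument, goes through as in \cite{J}.
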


We recall that the existence and uniqueness of the solution $u_p$ mentioned in Theorem \ref{thm3.1J} relies not on `classical' Calculus of Variations arguments, since the boundary datum $u_0$ may not be the trace of a Sobolev function.  The exploited techniques are those suitably employed in the context of Nonlinear PDEs, namely the existence can be deduced as in \cite[Theorem 9.25]{HKM}, while the uniqueness derives from arguments entriely similar to the so-called \cite[{\it Comparison principle} 7.6]{HKM}.  We also stress the fact that $p \to 1$, namely it is $1<p<2$, thus a posteriori, for such regular sets $\O$, one obtain that $C(\partial O)\subset W^{1-\frac{1}{p},p}(\partial O)$.

Now we state a lemma that will be exploited in the sequel.

\begin{Lemma}\label{Marilia}
Let $O$ be a smooth domain with $\partial O = \Gamma_1 \cup \Gamma_2$ and $\Gamma_1 \cap \Gamma_2 = \emptyset$.  
Let $u,v \in W^{1,p} (O)$ satisfy
$$-\Delta_p u \ge - \Delta_p v$$

in $O$, 

$$|\nabla u |^{p-2} \frac{\partial u}{\partial \nu} \ge |\nabla v |^{p-2} \frac{\partial v}{\partial \nu}$$

on $\Gamma_1$ and

$$u \ge v$$

on $\Gamma_2$ in the strong sense. Then $u\ge v$ in $\overline{O}$
\end{Lemma}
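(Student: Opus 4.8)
The plan is to run the classical weak comparison argument for the $p$-Laplacian: use $(v-u)^{+}$ as a test function and exploit the strict monotonicity of the map $A(\xi):=|\xi|^{p-2}\xi$ on $\mathbb R^{n}$. The hypotheses are to be understood in the weak sense, and the key observation is that combining $-\Delta_{p}u\ge-\Delta_{p}v$ in $O$ with the co-normal inequality $|\nabla u|^{p-2}\partial_{\nu}u\ge|\nabla v|^{p-2}\partial_{\nu}v$ on $\Gamma_{1}$ yields
\[
\int_{O}\bigl(A(\nabla u)-A(\nabla v)\bigr)\cdot\nabla\varphi\,dx\ \ge\ 0
\qquad\text{for every }\varphi\in W^{1,p}(O)\text{ with }\varphi\ge0\text{ and }\varphi=0\text{ on }\Gamma_{2}.
\]
Indeed, by Green's formula the left-hand side equals $\int_{O}(-\Delta_{p}u+\Delta_{p}v)\varphi\,dx+\int_{\Gamma_{1}}\bigl(|\nabla u|^{p-2}\partial_{\nu}u-|\nabla v|^{p-2}\partial_{\nu}v\bigr)\varphi\,d\mathcal H^{n-1}$, the $\Gamma_{2}$-boundary term being absent because $\varphi=0$ there; both summands are nonnegative since $\varphi\ge0$ and the two hypotheses hold.

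The first step is to check that $\varphi:=(v-u)^{+}$ is admissible: it belongs to $W^{1,p}(O)$, is nonnegative, has trace $0$ on $\Gamma_{2}$ (because $u\ge v$ there, in the strong and hence in the trace sense), and $\nabla\varphi=(\nabla v-\nabla u)\,\mathbf 1_{\{v>u\}}$ a.e. Substituting it into the displayed inequality gives
\[
0\ \le\ \int_{O}\bigl(A(\nabla u)-A(\nabla v)\bigr)\cdot\nabla\varphi\,dx\ =\ -\int_{\{v>u\}}\bigl(A(\nabla v)-A(\nabla u)\bigr)\cdot(\nabla v-\nabla u)\,dx ,
\]
so, the integrand on the right being pointwise nonnegative by monotonicity of $A$, it must vanish a.e. on $\{v>u\}$. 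Strict monotonicity of $A$ — quantitatively $\bigl(A(\xi)-A(\eta)\bigr)\cdot(\xi-\eta)\ge c_{p}|\xi-\eta|^{p}$ for $p\ge2$, and $\ge c_{p}|\xi-\eta|^{2}\bigl(|\xi|+|\eta|\bigr)^{p-2}$ (when $|\xi|+|\eta|>0$) for $1<p<2$ — then forces $\nabla u=\nabla v$ a.e. on $\{v>u\}$, that is, $\nabla\varphi=0$ a.e. in $O$.

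To finish, a $W^{1,p}$ function with vanishing gradient on the connected Lipschitz domain $O$ is a.e. equal to a constant, and that constant equals the (constant) value of its trace on $\partial O$; since the trace of $\varphi$ is $0$ on $\Gamma_{2}$ and $\mathcal H^{n-1}(\Gamma_{2})>0$, the constant is $0$, whence $(v-u)^{+}\equiv0$, i.e.\ $u\ge v$ a.e.\ in $O$. As $u$ and $v$ are continuous on $\overline O$, this gives $u\ge v$ everywhere on $\overline O$. \emph{The main obstacle here is organizational rather than analytical}: one must formulate the mixed weak problem (Neumann on $\Gamma_{1}$, Dirichlet on $\Gamma_{2}$) carefully enough that $(v-u)^{+}$ is a genuinely admissible test function — this uses continuity of the trace operator and the passage from the pointwise inequality $u\ge v$ on $\Gamma_{2}$ to the trace inequality — and one must invoke the appropriate strict-monotonicity estimate for $\xi\mapsto|\xi|^{p-2}\xi$ in the subquadratic range $1<p<2$; everything else is routine.
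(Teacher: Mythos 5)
Your proof is correct and follows essentially the same route as the paper's: test the difference of the weak (mixed) formulations with the truncated difference of $u$ and $v$, drop the boundary terms using the sign conditions on $\Gamma_1$ and the vanishing of the test function on $\Gamma_2$, and conclude from the strict monotonicity of $\xi\mapsto|\xi|^{p-2}\xi$ (the paper invokes Lindqvist's inequality, you invoke the equivalent standard estimates) that the gradient of the truncation vanishes. The only cosmetic difference is that the paper tests with $(u+\varepsilon-v)^{-}$ and lets $\varepsilon\to0$, while you test directly with $(v-u)^{+}$ and pin down the resulting constant through its zero trace on $\Gamma_2$; both arguments implicitly use that $O$ is connected and $\Gamma_2$ has positive surface measure, as in the intended application.
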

\begin{proof}[Proof.]
In order to prove that $u\ge v$ in $\overline{O}$ it is enough to show that

$$(u-v)^{-} \equiv 0$$

To this purpose, given $\varepsilon >0$, we use $(u + \varepsilon-v)^{-}$ as a test function in the equation

$$-\left(\Delta_p u - \Delta_p v \right) \ge 0$$
and we integrate to achieve

$$-\int_{O} \left(\Delta_p u - \Delta_p v \right) \left(u+ \varepsilon-v\right)^{-}dx \le 0$$

Then we use the integration by parts to obtain

\begin{equation}\label{0.1Marilia}
\int_{O} \left(|\nabla u|^{p-2} \nabla u - |\nabla v|^{p-2} \nabla v \right) \nabla \left(u+ \varepsilon-v\right)^{-}dx - \int_{\partial O} \left(|\nabla u |^{p-2} \frac{\partial u}{\partial \nu} - |\nabla v |^{p-2} \frac{\partial v}{\partial \nu}\right) \left(u+ \varepsilon -v\right)^{-}d{\cal H}^{N-1} \le 0
\end{equation}

Recalling that

$$\int_{O}  \left(|\nabla u|^{p-2} \nabla u - |\nabla v|^{p-2} \nabla v \right) \nabla \left(u+ \varepsilon -v\right)^{-}dx $$
$$= \int_{O \cap \left\{u+ \varepsilon < v\right\}}  \left(|\nabla u|^{p-2} \nabla u - |\nabla v|^{p-2} \nabla v \right) \cdot \left(\nabla u - \nabla v\right)dx$$
$$=\int_{O \cap \left\{u+ \varepsilon < v\right\}} |\nabla u|^{p-2} \nabla u \cdot \left(\nabla u - \nabla v \right)dx - \int_{O \cap \left\{u+ \varepsilon < v\right\}} |\nabla v|^{p-2} \nabla v \cdot \left(\nabla u - \nabla v \right)dx.$$

We use the following inequality (cf. \cite[Lemma 4.2]{L})

$$|x_2 |^{p} - |x_1 |^{p} \ge p |x_1 |^{p-2}  x_1 \cdot (x_2 - x_1 ) + c(p)\frac{|x_2 -x_1 |^p}{\left(|x_1| + |x_2|\right)^{2-p}}$$

and we get

$$|\nabla u|^{p-2} \nabla u \cdot \left(\nabla u - \nabla v \right) \ge \frac{1}{p} \left[|\nabla u|^{p} - |\nabla v|^{p} +c(p) \frac{|\nabla u - \nabla v|^{p}}{\left(|\nabla u| + |\nabla v|\right)^{2-p}}\right]$$
 \noindent a.e. in $O$, 
$$-|\nabla v|^{p-2} \nabla v \cdot \left(\nabla u - \nabla v \right) \ge \frac{1}{p} \left[|\nabla v|^{p} - |\nabla u|^{p} + c(p) \frac{|\nabla u - \nabla v|^{p}}{\left(|\nabla u| + |\nabla v|\right)^{2-p}}\right]$$
\noindent a.e. in $O$.

\noindent Hence

$$\left(|\nabla u|^{p-2} \nabla u - |\nabla v|^{p-2} \nabla v \right) \cdot \left(\nabla u - \nabla v\right) \ge \frac{2}{p}c(p)\frac{|\nabla u - \nabla v|^{p}}{\left(|\nabla u| + |\nabla v|\right)^{2-p}}>0,  $$
a.e. in $O$.

\noindent This proves the positivity of the first term. For the second term, we can split it into two pieces

$$\int_{\partial O} \left(|\nabla u |^{p-2} \frac{\partial u}{\partial \nu} - |\nabla v |^{p-2} \frac{\partial v}{\partial \nu}\right) \left(u + \varepsilon-v\right)^{-}d{\cal H}^{N-1}=$$ 
$$\int_{\Gamma_1} \left(|\nabla u |^{p-2} \frac{\partial u}{\partial \nu} - |\nabla v |^{p-2} \frac{\partial v}{\partial \nu}\right) \left(u+ \varepsilon -v\right)^{-}d {\cal H}^{N-1} + \int_{\Gamma_2} \left(|\nabla u |^{p-2} \frac{\partial u}{\partial \nu} - |\nabla v |^{p-2} \frac{\partial v}{\partial \nu}\right) \left(u+ \varepsilon-v\right)^{-}d {\cal H}^{N-1},$$

\noindent and we know that the integrand on $\Gamma_1$ is non positive since $\left(u+ \varepsilon-v\right)^{-} \le 0$, while the second integral is zero. Then the left hand side of \eqref{0.1Marilia} is non negative. this ensures that $\nabla \left(u + \varepsilon -v\right) \equiv 0$, so $v=u+ C$ in the set $\left\{u+ \varepsilon < v\right\}$. Hence $v \le u+ \varepsilon$ for any $\varepsilon$, and then $u\ge v$. 

\end{proof}

We can prove the following result.

\begin{Theorem}\label{asymptoticpto1equation}
Let $\Omega:=\omega \times \left(-\frac{1}{2},\frac{1}{2}\right)$ and assume that $\omega \subset \mathbb R^2$ is a bounded smooth domain  whose boundary has positive mean curvature, and let $u_0 \in C(\partial \omega)$. Then the unique weak solutions $u_p$ 
of \eqref{p-e-equation} in the sense that they are in $C(\overline{\Omega})\cap W^{1,p}_{\rm loc}(\Omega)$ and
\begin{equation}\nonumber
\displaystyle{\int_\Omega \left(\left|Id_\e \nabla u_p \cdot \nabla u_p\right|^{\frac{p-2}{2}}\ Id_\e \nabla u_p \right)\cdot \nabla \varphi dx=0}
\end{equation}
for every $\varphi \in C^\infty_0(\Omega)$,
are also $p-harmonic$ functions referred to \eqref{p-0-equation-1} and \eqref{p-0-equation-2} and, converge uniformly as $p \to 1$ to the unique function of least gradient in $\omega$ with datum $u_0$.   
\end{Theorem}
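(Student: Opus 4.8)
The plan is to show that, for $p$ close to $1$, the weak solution $u_p$ of \eqref{p-e-equation} coincides with the $x_3$-independent extension to $\Omega$ of the unique $p$-harmonic function on the cross section $\omega$ with datum $u_0$, and then to quote Theorem \ref{thm3.1J} for the passage to the limit. I use throughout that, since $\omega$ is smooth with $\partial\omega$ of positive mean curvature and $u_0\in C(\partial\omega)$, the nonlinear potential theory recalled before Lemma \ref{Marilia} (namely \cite[Theorem 9.25]{HKM} for existence and the comparison principle in \cite{HKM} for uniqueness) produces a unique $v_p\in C(\overline\omega)\cap W^{1,p}_{\rm loc}(\omega)$ which is $p$-harmonic in $\omega$ (Definition \ref{def2.2J}) and satisfies $v_p=u_0$ on $\partial\omega$; this is exactly the function occurring in Theorem \ref{thm3.1J} with $O=\omega$.

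First I would verify that $\widetilde v_p(x_\alpha,x_3):=v_p(x_\alpha)$ is a weak solution of \eqref{p-e-equation} in the class stated in the theorem. It belongs to $C(\overline\Omega)\cap W^{1,p}_{\rm loc}(\Omega)$ and has $\nabla_3\widetilde v_p\equiv0$, so $Id_\e\nabla\widetilde v_p=(\nabla_\alpha v_p\,|\,0)$, $|Id_\e\nabla\widetilde v_p\cdot\nabla\widetilde v_p|^{\frac{p-2}{2}}Id_\e\nabla\widetilde v_p=(|\nabla_\alpha v_p|^{p-2}\nabla_\alpha v_p\,|\,0)$, and for $\varphi\in C^\infty_0(\Omega)$ Fubini's theorem together with the $p$-harmonicity of $v_p$ (applied, for each $x_3\in(-\tfrac12,\tfrac12)$, to $\varphi(\cdot,x_3)\in C^\infty_0(\omega)$) yields $\int_\Omega|\nabla_\alpha v_p|^{p-2}\nabla_\alpha v_p\cdot\nabla_\alpha\varphi\,dx=\int_{-1/2}^{1/2}\!\Big(\int_\omega|\nabla_\alpha v_p|^{p-2}\nabla_\alpha v_p\cdot\nabla_\alpha\varphi(\cdot,x_3)\,dx_\alpha\Big)dx_3=0$. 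Also $\widetilde v_p=u_0$ on $\partial\omega\times(-\tfrac12,\tfrac12)$, and on $\omega\times\{\pm\tfrac12\}$ the outer normal is $\pm e_3$, so $(Id_\e\nabla\widetilde v_p)\cdot\nu=\pm\e^{-2}\nabla_3\widetilde v_p=0$; hence $\widetilde v_p$ satisfies all of \eqref{p-e-equation}. Being $x_3$-independent and $p$-harmonic in $\omega$, it is moreover a $p$-harmonic function in the sense of \eqref{p-0-equation-1} and \eqref{p-0-equation-2}.

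Next I would establish uniqueness, which then gives $u_p=\widetilde v_p$ for every weak solution $u_p$ in that class. Lemma \ref{Marilia} applies with $-\Delta_p$ replaced by the anisotropic operator $-\Delta_{p,\e}$ and the conormal derivative $|\nabla u|^{p-2}\partial_\nu u$ replaced by $|Id_\e\nabla u\cdot\nabla u|^{\frac{p-2}{2}}(Id_\e\nabla u)\cdot\nu$: the proof relies only on the strict monotonicity inequality \cite[Lemma 4.2]{L}, which stays valid for $\xi\mapsto(Id_\e\xi\cdot\xi)^{p/2}$ because $Id_\e$ is symmetric and positive definite, and on the mixed Dirichlet--Neumann structure of the boundary, which $\Omega$ has with $\Gamma_1=\omega\times\{-\tfrac12,\tfrac12\}$ and $\Gamma_2=\partial\omega\times(-\tfrac12,\tfrac12)$. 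On $\Gamma_1$ the two anisotropic conormal derivatives of $u_p$ and $\widetilde v_p$ both vanish, and on $\Gamma_2$ one has $u_p=u_0=\widetilde v_p$, so applying the comparison first to $(u_p,\widetilde v_p)$ and then to $(\widetilde v_p,u_p)$ forces $u_p=\widetilde v_p$ in $\overline\Omega$. In particular $u_p$ does not depend on $x_3$ and equals $v_p$.

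Finally, Theorem \ref{thm3.1J} with $O=\omega$ gives $v_p\to v$ uniformly in $\omega$ as $p\to1$, where $v\in BV(\omega)\cap C(\overline\omega)$ is the function of least gradient in $\omega$ with datum $u_0$, unique by Theorems \ref{thm2.1J} and \ref{thm2.2SZ} (whose geometric hypotheses follow from the smoothness and positive mean curvature of $\partial\omega$); hence $u_p=\widetilde v_p\to v$ uniformly on $\Omega$, identifying $v$ with its $x_3$-independent extension. The step I expect to be the main obstacle is the uniqueness one: one must make sure the weak solution is regular enough — in $W^{1,p}(\Omega)$ up to the boundary, with a pointwise meaningful conormal derivative on the top and bottom faces — to legitimately invoke the (piecewise $C^1$, anisotropic) form of Lemma \ref{Marilia}, which needs interior $C^{1,\alpha}$ and boundary regularity for the $\e$-anisotropic $p$-Laplacian and the understanding that the Neumann condition on $\omega\times\{\pm\tfrac12\}$ is part of the notion of weak solution of \eqref{p-e-equation}.
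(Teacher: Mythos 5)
Your proposal is correct and follows essentially the same route as the paper: identify the solution with the $x_3$-independent extension of the unique $p$-harmonic function on $\omega$, obtain uniqueness for the mixed Dirichlet--Neumann problem from the comparison Lemma \ref{Marilia}, and conclude with Juutinen's Theorem \ref{thm3.1J} together with Theorems \ref{thm2.1J}--\ref{thm2.2SZ}. The only (equivalent) variation is at the uniqueness step: you adapt Lemma \ref{Marilia} to the anisotropic operator $-\Delta_{p,\e}$ on the rescaled cylinder $\Omega$, while the paper applies the lemma verbatim to the plain $p$-Laplacian on the unrescaled domain $\Omega_\e$, the two being related by the change of variables \eqref{changeofvariable}; your closing caveat about the conormal/Neumann condition being part of the weak formulation is also implicitly present in the paper's argument.
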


\begin{proof}[Proof]
For $p >1$, the existence and uniqueness of $p-harmonic$ solutions (independent on $x_3$) to \eqref{p-0-equation-1} and \eqref{p-0-equation-2} can be deduced as already observed in Theorem \ref{thm3.1J}. 
For $p=1$, we observe that Theorem \ref{thm2.1J} applied to $\omega$ ensures that there exists a unique function $u$ of least gradient with datum $u_0$ . Moreover again Theorem \ref{thm3.1J} and \cite[Remark 3.4]{J} provide the uniform convergence of the above $u_p$ to this solution $u$. To conclude the proof it remains to show that $u_p$ are also unique among the functions in $W^{1,p}(\Omega)\cap C(\overline{\Omega})$.  This latter fact follows from the lemma \ref{Marilia} applied to the unrescaled domain $\Omega_\e$.

\end{proof}

Now we can introduce the least gradient problem in the thin domain, taking into account the rescaling in \eqref{changeofvariable}
\begin{equation}\label{leastgradient-e-1}
\displaystyle{\inf_{u \in BV(\Omega)}\left\{\left|D_\alpha u, \frac{1}{\e }D_3 u\right|(\Omega), u \equiv u_0 \hbox{ on } \partial \omega \times \left(-\frac{1}{2}, \frac{1}{2}\right)\right\},}
\end{equation}
and its version on the class $BV(\Omega) \cap C(\overline{\Omega})$,
\begin{equation}\label{leastgradient-e-2}
\displaystyle{\inf_{u \in BV(\Omega)\cap C(\overline{\Omega})}
\left\{\left|D_\alpha u, \frac{1}{\e }D_3 u \right|(\Omega), u \equiv u_0 \hbox{ on } \partial \omega\times\left(-\frac{1}{2},\frac{1}{2}\right)\right\}.}
\end{equation}


  
In order to provide sufficient conditions ensuring that both problems \eqref{leastgradient-e-1} and \eqref{leastgradient-e-2} admit a unique solution, we prove the following theorem.

\begin{Theorem}\label{lastleast}
Let $\Omega:\omega \times \left(-\frac{1}{2},\frac{1}{2}\right)$ with $\omega \subset \mathbb R^2$ a bounded open set, piecewise $C^1$, and verifying \eqref{3.1SWZ}, \eqref{3.2SWZ} and a uniform exterior ball condition as in Theorem \ref{thm2.2SZ}. Let $u_0 \in C(\partial \omega)$. Then problems \eqref{leastgradient-e-1} and \eqref{leastgradient-e-2} admit a unique coincident solution, obtained as limit  for $p\to 1$ in $L^1(\Omega)$-strong topology and locally uniformly  in $\Omega$ of $\{u_{p,\e}\}$, where the latter is the unique solution of \eqref{p-e-equation}.
\end{Theorem}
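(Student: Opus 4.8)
The plan is to show that both infima in \eqref{leastgradient-e-1} and \eqref{leastgradient-e-2} are attained exactly at the (unique) least gradient function of the cross section $\omega$, extended trivially in the transverse variable, and then to read off the $p\to 1$ convergence from Theorem~\ref{asymptoticpto1equation}.

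First I would apply Theorem~\ref{thm2.1J} to $O=\omega$: since $\omega$ is piecewise $C^1$ and satisfies \eqref{3.1SWZ} and \eqref{3.2SWZ}, there is a unique $u\in BV(\omega)\cap C(\overline\omega)$ of least gradient with $u=u_0$ on $\partial\omega$; combining this with Theorem~\ref{thm2.2SZ} (applicable thanks to the uniform exterior ball condition) one gets that $u$ is in fact the unique minimizer of $|Dw|(\omega)$ over all $w\in BV(\omega)$ with trace $u_0$, so that the infima in \eqref{leastgradient1} and \eqref{leastgradient2} coincide and are attained only at $u$. With the usual abuse of notation I regard $u$ as a function in $BV(\Omega)\cap C(\overline\Omega)$ independent of $x_3$; then $D_3u=0$, and since the cylinder has unit height $\left|D_\alpha u,\tfrac1\e D_3u\right|(\Omega)=|D_\alpha u|(\Omega)=|Du|(\omega)$, while $u=u_0$ on $\partial\omega\times\left(-\tfrac12,\tfrac12\right)$. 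Hence $u$ is admissible for both \eqref{leastgradient-e-1} and \eqref{leastgradient-e-2}.

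Next comes the lower bound: for every $v\in BV(\Omega)$ with $v=u_0$ on the lateral boundary I would show $\left|D_\alpha v,\tfrac1\e D_3v\right|(\Omega)\ge|Du|(\omega)$. Discarding the transverse part gives $\left|D_\alpha v,\tfrac1\e D_3v\right|(\Omega)\ge|D_\alpha v|(\Omega)$. Then I average in $x_3$, setting $\overline v(x_\alpha):=\int_{-1/2}^{1/2}v(x_\alpha,x_3)\,dx_3$: testing the Gauss--Green formula on $\Omega$ against fields $\phi\in C^1_c(\mathbb R^2;\mathbb R^2)$ extended constantly in $x_3$ (so that the contribution of $\omega\times\left\{-\tfrac12,\tfrac12\right\}$ vanishes, the normal there being vertical) and comparing with the two--dimensional Gauss--Green formula on $\omega$, one obtains $\overline v\in BV(\omega)$ with $|D\overline v|(\omega)\le|D_\alpha v|(\Omega)$ and $\gamma_\omega(\overline v)=u_0$ on $\partial\omega$. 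Minimality of $u$ then yields $|D\overline v|(\omega)\ge|Du|(\omega)$, and the chain closes. Thus $u$ minimizes \eqref{leastgradient-e-1} with value $|Du|(\omega)$; since $u\in C(\overline\Omega)$ lies in the smaller admissible class of \eqref{leastgradient-e-2} with the same value, the two infima agree and $u$ minimizes \eqref{leastgradient-e-2} as well.

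For uniqueness, let $v$ minimize \eqref{leastgradient-e-1}; then all the inequalities above are equalities. In particular $\left|D_\alpha v,\tfrac1\e D_3v\right|(\Omega)=|D_\alpha v|(\Omega)$, and comparing Radon--Nikod\'ym densities with respect to a common dominating measure forces $D_3v=0$, so $v$ depends only on $x_\alpha$ and coincides with $\overline v$ after the identification; since $|D\overline v|(\omega)=|Du|(\omega)$ and $\gamma_\omega(\overline v)=u_0$, the uniqueness recalled above gives $v=u$. A minimizer of \eqref{leastgradient-e-2} is a fortiori a minimizer of \eqref{leastgradient-e-1}, hence also equals $u$. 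Finally, the convergence assertion follows as in Theorem~\ref{asymptoticpto1equation}: the unique solutions $u_{p,\e}$ of \eqref{p-e-equation} are $p$-harmonic functions of $x_\alpha$ alone, whose uniqueness is ensured by Lemma~\ref{Marilia}, and they converge uniformly on $\overline\Omega$ as $p\to1$ to the least gradient function of $\omega$ with datum $u_0$, i.e. to $u$; uniform convergence on the bounded set $\overline\Omega$ then gives both $L^1(\Omega)$-strong convergence and locally uniform convergence in $\Omega$. The main obstacle is the averaging step --- rigorously justifying $|D\overline v|(\omega)\le|D_\alpha v|(\Omega)$ together with the trace identity $\gamma_\omega(\overline v)=u_0$ --- and, upstream of it, making sure that the two--dimensional least gradient problem is genuinely uniquely solvable over all of $BV(\omega)$, which is exactly where Theorems~\ref{thm2.1J} and~\ref{thm2.2SZ} have to be used in tandem.
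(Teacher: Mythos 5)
Your argument is correct, but it reaches the conclusion by a genuinely different route than the paper. The paper proves that the limit $u$ of the $p$-harmonic solutions minimizes \eqref{leastgradient-e-1} and \eqref{leastgradient-e-2} indirectly: it observes that $u_{p,\e}=u_p$ are the unique minimizers of ${\cal P}_{p,\e}$ and ${\cal P}_{p,0}$, invokes the $\Gamma$-convergence results of Theorems \ref{Gammap0to10} and \ref{corollary33AGZAimeta} together with the convergence-of-minimizers statement (Theorem \ref{cor7.17DM}) to conclude that $u$ minimizes the relaxed problems of type \eqref{1-e-problemBv}, and then uses that $u$ is admissible (it attains the datum $u_0$) to pass to the constrained problems. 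You instead bypass the $\Gamma$-convergence machinery entirely for the existence/uniqueness part: you show directly, via the slicing/averaging estimate $\left|D_\alpha v,\tfrac1\e D_3v\right|(\Omega)\ge |D_\alpha v|(\Omega)\ge |D\overline v|(\omega)\ge |Du|(\omega)$ with $\gamma_\omega(\overline v)=u_0$, that both cylindrical problems reduce exactly to the planar least gradient problem \eqref{leastgradient1}--\eqref{leastgradient2}, whose unique solvability comes from Theorems \ref{thm2.1J} and \ref{thm2.2SZ} used in tandem, exactly as in the paper; the $p\to1$ convergence is then quoted from Theorem \ref{asymptoticpto1equation} and Lemma \ref{Marilia}, which is the same ingredient the paper uses. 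Your route buys something the paper's proof only asserts: an explicit uniqueness argument for \eqref{leastgradient-e-1} and \eqref{leastgradient-e-2} (equality in the chain forces $D_3v=0$, hence $v$ is planar and 2D uniqueness applies), whereas the paper's proof stops at exhibiting $u$ as a minimizer. The price is that you must justify the trace-of-the-average identity and $|D\overline v|(\omega)\le |D_\alpha v|(\Omega)$, which you correctly flag; this is a standard Gauss--Green/Fubini argument (test with horizontal fields constant in $x_3$, note the flux through $\omega\times\{-\tfrac12,\tfrac12\}$ vanishes), so the gap is one of detail, not of substance. The paper's route, by contrast, needs no such lemma but leans on the whole $\Gamma$-convergence apparatus and on the identification of the relaxed limit functionals.
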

\begin{proof}[Proof]
We start by observing that the assumptions on $\omega$ ensure that, as in \cite[Theorem 3.1]{J} $u_0 \in W^{1-\frac{1}{p},p}(\omega)$ for $1<p<2$. Consequently for every $1<p<2$ there exists a unique function $u_p$ solution of \eqref{p-0-equation-2} . The fact that $u_p$ is independent of $x_3$, implies that $u_p$ solves also \eqref{p-0-equation-1} and \eqref{p-e-equation} for every $\e>0$. On the other hand theorem \ref{asymptoticpto1equation} says also that $u_p$ is the unique solution of \eqref{p-e-equation}. Thus we can denote this solution $u_p$ also as $u_{p,\e}$. 
Next we can observe, by virtue of the strict convexity of $I_{p,\e}$ in \eqref{p-e-functional} and $I_{p,0}$ in \eqref{p-0-functional}, that for every $1<p<2$ and for every $\e>0$, $u_p \equiv u_{p,\e}$ is also the unique mimimum point of ${\cal P}_{p,0}$ and ${\cal P}_{p,\e}$. On the other hand Theorem \ref{asymptoticpto1equation} guarantees that $u_{p,\e}=u_p$ converges uniformly in $\overline{\Omega}$  (hence in $L^1(\Omega)$) to the unique solution $u$ of \eqref{leastgradient1} and \eqref{leastgradient2}. It is easily seen that the function $u$ is admissible also for problems \eqref{leastgradient-e-1} and \eqref{leastgradient-e-2}.
 Moreover the fact that $u_0$ is an admissible boundary datum for the $\Gamma$-convergence  theorems \ref{Gammap0to10} and \ref{corollary33AGZAimeta}, leads us to conclude that the common mimimum values $u_p$ of ${\cal P}_{p,\e}$ and ${\cal P}_{p,0}$ converge to the minimum of ${\cal P}_{1,0}$ and ${\cal P}_{1,\e}$. Consequently exploiting Theorem \ref{cor7.17DM} we can say that $u$  (the strong $L^1(\Omega)$ limit of $u_{p,\e}=u_p$ as $p \to 1$) is a minimum both for \eqref{leastgradient-e-1} and \eqref{leastgradient-e-2}. This  concludes the proof.       
\end{proof}

\section{Acknowledgements}

The authors acknowledge the support of Universit\`a degli Studi di Salerno and Universit\`a degli Studi del Sannio.
The last author is indebted with Irene Fonseca for the introduction to the topics of $-\Delta_1$ and the least gradient theory and, together with coauthors with Roberto Paroni and Carlo Sbordone for having proposed this investigation.

\end{document}